\let\savedbigtimes\bigtimes
\let\bigtimes\relax
\let\bigtimes\savedbigtimes
\crefname{appsec}{Appendix}{Appendices}
\theoremstyle{plain}
\newtheorem{theorem}{Theorem}[section]
\newtheorem{proposition}[theorem]{Proposition}
\newtheorem{lemma}[theorem]{Lemma}
\newtheorem{corollary}[theorem]{Corollary}
\newtheorem{claim}[theorem]{Claim}
\newtheorem{fact}[theorem]{Fact}
\theoremstyle{definition}
\newtheorem{definition}[theorem]{Definition}
\newtheorem{example}[theorem]{Example}
\newtheorem*{assumption*}{Assumption}
\theoremstyle{remark}
\newtheorem{remark}[theorem]{Remark}
\crefname{lemma}{Lemma}{Lemmas}
\crefname{theorem}{Theorem}{Theorems}
\crefname{definition}{Definition}{Definitions}
\crefname{fact}{Fact}{Facts}
\crefname{claim}{Claim}{Claims}
\crefname{proposition}{Proposition}{Propositions}
\newcommand{\Var}{\mathrm{Var}}
\newcommand{\Binom}{\mathrm{Binomial}}
\renewcommand{\epsilon}{\varepsilon}
\newcommand{\QQ}{\mathbb{Q}}
\newcommand{\PP}{\mathbb{P}}
\newcommand{\EE}{\mathbb{E}}
\newcommand{\beq}{\begin{equation}}
\newcommand{\eeq}{\end{equation}}
\renewcommand{\emptyset}{\varnothing}
\begin{document}
\author
[X. Yu, I. Zadik, P. Zhang]{Xifan Yu$^\dagger$, Ilias Zadik$^{\circ}$ and Peiyuan Zhang$^{\dagger}$}
\thanks{\raggedright$^\circ$Department of Statistics and Data Science, Yale University;
$^\dagger$Department of Computer Science, Yale University.
$^\star$Department of Electrical Engineering, Yale University;\\
Email: \texttt{\{xifan.yu, ilias.zadik, peiyuan.zhang\}@yale.edu}}

\title[Counting Stars is Optimal for Detecting Planted Subgraph]{Counting Stars is Constant-Degree Optimal\\ For Detecting Any Planted Subgraph}

\maketitle

\begin{abstract}%
We study the computational limits of the following general hypothesis testing problem. Let $H=H_n$ be an \emph{arbitrary} undirected graph on $n$ vertices.  We study the detection task between a ``null'' Erd\H{o}s-R\'{e}nyi random graph $G(n,p)$ and a ``planted'' random graph which is the union of $G(n,p)$ together with a random copy of $H=H_n$. Our notion of planted model is a generalization of a plethora of recently studied models initiated with the study of the planted clique model (Jerrum 1992), which corresponds to the special case where $H$ is a $k$-clique and $p=1/2.$

 Over the last decade, several papers have studied the power of low-degree polynomials for limited choices of $H$'s in the above task. In this work, we adopt a unifying perspective and characterize the power of \emph{constant degree} polynomials for the detection task, when \emph{$H=H_n$ is any arbitrary graph} and for \emph{any $p=\Omega(1).$} Perhaps surprisingly, we prove that the optimal constant degree polynomial is always given by simply \emph{counting stars} in the input random graph. As a direct corollary, we conclude that the class of constant-degree polynomials is only able to ``sense'' the degree distribution of the planted graph $H$, and no other graph theoretic property of it.
\end{abstract}

\section{Introduction}
During the last decade, researchers have revealed the existence of several computational-statistical trade-offs, that is parameter regimes where several statistical inference tasks are information theoretically possible but, conjecturably, computationally hard. A simple, yet rich, family of settings that these phenomena tend to appear are ``planted'' subgraph detection (or hypothesis testing) tasks where the goal is to detect the presence of a subgraph planted in otherwise Erd\H{o}s-R\'{e}nyi random graph $G(n,p)$\footnote{For $n \in \mathbb{N}, p=p_n$, an instance of the Erd\H{o}s-R\'{e}nyi $G(n,p)$ is an $n$ vertex undirected random graph where each edge appears independently with probability $p$.}. A notable example is the planted clique problem \cite{Jer92}, where one seeks to detect between a ``null'' model which is the Erd\H{o}s-R\'{e}nyi $G(n,1/2)$, and a ``planted subgraph'' model which is the union of $G(n,1/2)$ \emph{union} with a randomly chosen $k$-clique. Other planted subgraph models that have been studied in this line of work include: (a) the planted dense subgraph problem \cite{hajek2015computational}, where one plants in $G(n,p)$ an instance of $G(n,q)$ for $q>p$, (b) the planted tree model \cite{massoulie2019planting}, where one plants in $G(n,p)$ a $D$-ary tree, (c) the planted Hamiltonian cycle problem \cite{bagaria2020hidden} where one plants a Hamiltonian cycle, and (d) the planted matching problem \cite{moharrami2021planted} where one plants a perfect matching.

Notably, while at a high-level the methods to understand each planted subgraph model share similarities, the actual technical analysis is often quite intricate and tailored to the setting. It is natural to wonder if one could simultaneously study all planted subgraph detection tasks by focusing on the properties of a general framework. Motivated by exactly this desire, \cite{mossel2023sharp} studied the information-theoretic transitions of a general planted subgraph model, which includes all the above mentioned planted models as special cases. While \cite{mossel2023sharp} focused on the recovery task of estimating the hidden subgraph, we focus here on the detection variant of it. 

\begin{definition}[Planted subgraph detection task]\label{dfn:planted}
    Let $n \in \mathbb{N}, p=p_n \in (0,1)$ and $H=H_n$ be an \emph{arbitrary} undirected graph. We consider the following detection task.
   
    \begin{enumerate}
        \item (Null distribution $\mathbb{Q}$) In this case, the statistician observes an instance from the Erd\H{o}s-R\'{e}nyi random graph distribution $\mathbb{Q}=G(n,p)$.
        \item (Planted-$H$ distribution $\mathbb{P}$) In this case, the statistician observes the union of an Erd\H{o}s-R\'{e}nyi random graph $G(n,p)$ with a random copy of $H$. The random copy of $H$ is chosen uniformly at random from all the labelled copies of $H$ in the complete graph. 
        
    \end{enumerate}
\end{definition} It is rather straightforward to see how the general Definition \ref{dfn:planted} contains the mentioned detection tasks as special cases; e.g., when $H$ is a $k$-clique and $p=1/2$ we recover the planted clique task. 

\subsection*{Searching for universal structure: null and planted models} It is worth mentioning that this desire for generality shares roots with a fascinating line of work on the Kahn-Kalai conjecture from probabilistic combinatorics (see \cite{kahn2007thresholds} for the conjecture, and \cite{park2022proof} for a recent breakthrough proof). The context of the conjecture has similarities with our setting. It studies our null distribution $\mathbb{Q}=G(n,p),$ and it is about characterizing the thresholds $p$ for which an instance of a $G(n,p)$ contains a specific subgraph $H=H_n$ of interest. Similar again to the literature of planted models, a plethora of works have studied the thresholds for specific choices of subgraphs (e.g., see the classic work on Hamiltonian cycles \cite{posa1976hamiltonian} and the very technical recent work on spanning trees of bounded degree \cite{montgomery2019spanning}). The Kahn-Kalai conjecture offers a formula for the threshold for \emph{any subgraph} H. It is remarkable how the, now proven, Kahn-Kalai conjecture directly implies multiple previous notable results in random graph theory as direct corollaries (including the mentioned examples). It is also remarkable that the proof of the conjecture was only a few pages long. This line of work offers at least an argument that seeking a general and unifying structure in the analysis of such random graph models can be very fruitful. 

Returning now to planted subgraph models, similar to \cite{mossel2023sharp}, the question of finding a general structure underlying all these models, similar to the line of work on the Kahn-Kalai conjecture, is the primary motivation of our work. While \cite{mossel2023sharp} studied the information-theoretic limits of planted subgraph detection tasks, in this work we investigate a common structure on their computational limits, i.e., in their \emph{computational-statistical trade-offs}. Unfortunately, given that the $\mathcal{P} \not = \mathcal{NP}$ question remains unsolved, identifying the ``true'' computational limit of any detection task, that is characterize exactly when some polynomial-time test succeeds or not, appears to be well beyond the current mathematical abilities. For this reason, researchers on computational-statistical trade-offs have turned to studying multiple powerful restricted class of test statistics, often containing the best known polynomial-time test, and offering their proven failure point as evidence the existing computational limits are fundamental.

\subsection*{Low-degree polynomials} Motivated by connections with the Sum-of-Squares hierarchy, the study of the powerful class of \emph{low-degree polynomials} to construct test statistics has played a key role in this direction. First, it can be verified in a plethora of cases that the best known polynomial-time test statistics (e.g., based on spectral methods, or message passing methods) can be well approximated by low-degree polynomials (commonly $O(1)$ or $O(\log n)$ degree suffices). On top of that, the class of low-degree polynomials is believed to be very powerful, and a now well-known ``low-degree conjecture'' \cite{hopkins2018statistical}, \cite{kunisky2019notes} states that for a general class of detection problems when all $O(\log n)$-degree polynomials fail to strongly separate the two distributions (see Definition \ref{dfn:strong_sep} below), then no polynomial-time test will be able to detect between the two. The performance of the class of constant $O(1)$-degree polynomials has also been used as (less strong but still quite interesting) evidence of hardness. For example, a recent work \cite{montanari2022equivalence} established that Approximate Message Passing is optimal among $O(1)$-degree polynomial in a spiked matrix estimation setting.

For these reasons, multiple papers have studied so far the power of low-degree polynomials to achieve strong separation for a number of different planted subgraph detection tasks. For example, in the planted clique model it is known that if $k=\Omega(\sqrt{n})$ some $O(\log n)$-degree polynomial succeeds, while if $k=o(\sqrt{n})$ all $O(\log n)$-degree polynomials fail to strongly separate the two distributions (see e.g., \cite{kunisky2019notes} and references therein). It should be noted though that for every new planted subgraph detection task that has been analyzed a new careful analysis is usually needed, which often brings its own challenges (similar to the literature of the Kahn-Kalai conjecture). The main focus of this work is to explore the \emph{simultaneous} study of the class of low-degree polynomials for all planted subgraph detention tasks i.e., for any planted subgraph $H$.

\subsection*{Abscence of structural positive results} Moreover, prior work on low-degree polynomials has built a powerful technique, based on what is called the low-degree advantage (or low-degree likelihood ratio) \cite{kunisky2019notes} (see Definition \ref{dfn:advantage}), to prove the \emph{failure}\footnote{From this point on in the Introduction, by ``success'' (or ``failure'') of a polynomial in a detection task we strictly refer to whether it strongly separates $\mathbb{P}$ and $\mathbb{Q}$ (or not), per Definition \ref{dfn:strong_sep}. } of the class of low-degree polynomials for detection tasks. To be more precise, as long as the degree-$D$ advantage remains bounded, we know that no degree-$D$ polynomial can strongly separate $\mathbb{Q}$ and $\mathbb{P}$. Yet, our understanding of how to argue about \emph{positive results} for the class of low-degree polynomials is significantly more limited and much less automated. One natural candidate would be to consider the low-degree advantage again and use it as a criterion if it is unbounded. Unfortunately, this suggestion is not generally true. For example, for a regime of the so-called planted dense hypergraph problem, the low-degree ratio explodes (due to rare events) but in fact no low-degree polynomial succeeds \cite{dhawan2023detection}.  Understanding whether the low-degree advantage exploding is a sufficient criterion for the success of low-degree polynomials when we focus on planted subgraph detection tasks is also a partial motivation for the present work.

A related struggle is that even if a low-degree polynomial is ``predicted'' to work, there is no known general tool to understand the structure of this ``optimal'' low-degree polynomial. Yet, the best known algorithms (and therefore their polynomial approximations) appear to be significantly different among different settings. For example, the best known polynomial-time for planted clique is a spectral method together with a combinatorial search post-processing step \cite{alon1998finding}, while for the planted Hamiltonian cycle problem it is a linear program relaxation of a TSP problem \cite{bagaria2020hidden}. This is a significant issue, as for any new detection task that a statistician is facing, they need to design the ``correct'' polynomial-time test statistic mostly based on their intuition and the unique properties of each subgraph $H$. 

Summarizing the above, this work is motivated by the following three key questions on planted subgraph detection tasks.
\begin{quote}
\centering{\emph{ (Q1) For any $H=H_n$, can we automate when a degree-$D$ polynomial works?}}\\
    \centering{\emph{(Q2) Can we characterize the structure of the optimal degree-$D$ polynomial?}}\\
    \centering{\emph{ (Q3) Which features of $H=H_n$ should a degree-$D$ polynomial be exploiting?}}
\end{quote}

\subsection*{Main Contributions (Informal)}

In this work, our main contribution is to offer an answer to the above questions (Q1), (Q2), (Q3) for any planted subgraph detection task with arbitrary $H=H_n$ and for any $p=\Omega(1)$, when we focus on the class of \emph{constant $D=O(1)$-degree polynomials}. Informally, under these assumptions, a summary of our contributions is as follows. \begin{itemize}

    \item We start with our main result (Theorem \ref{thm:main}). We prove that for all choices of $H=H_n$, the optimal $D=O(1)$-degree polynomial is always given by the \textbf{signed count of a $t$-star graph} in the input graph for some $1 \leq t \leq D$. In other words, some degree-$D$ polynomial succeeds in detecting if and only if for some $1 \leq t \leq D$ the polynomial which counts $t$-star graphs works. This reveals an interesting new underlying structure shared by all planted subgraph detection tasks, and an easy-to-check criterion for the success of constant degree polynomials.

    \item A moment analysis of the star counting polynomials, together with our main result, implies that the success of constant degree polynomials is a function solely of the \emph{degree distribution} of the planted $H$ (see Theorem \ref{thm:degree-characterization}). In other words, for any two $H_1$ and $H_2$ with the same degree distribution, either some degree-$D$ polynomial can succeed for the planted detection tasks corresponding to both $H_1$ and $H_2,$ or all degree-$D$ polynomials fail for the detection tasks corresponding to both $H_1$ and $H_2$. We find this a surprising conclusion of our work, given all the potential other features of $H$ a constant $D$-degree polynomial could be exploiting (e.g., the $D>1$-neighborhood structure of each vertex).

\item We describe how our results implies a series of old and new results on low-degree polynomials for planted detection tasks (see Section \ref{sec:apps}). 
    \item We prove that our main result is tight. We provide counterexamples for the optimality of star counts when either $p=o(1)$ or $D=\omega(1).$  (see Section \ref{subsection:failure-of-counting-stars}).
\end{itemize}

\subsection*{Further comparison with previous work}

We would like to expand here briefly on our literature review. We are not aware of any other work attempting to understand the above questions for planted subgraph detection tasks per Definition \ref{dfn:planted} in that level of generality. Yet, we should mention a relevant work by \cite{huleihel2022inferring}.

First, \cite{huleihel2022inferring} studies a similar, yet incomparable, general planted subgraph setting where one seeks to detect between an Erd\H{o}s-R\'{e}nyi $G(n,p)$ and a planted distribution where one plants a copy of a subgraph $H$ as an \emph{induced subgraph} in a $G(n,p).$

We first note this is not the same setting with Definition \ref{dfn:planted}, as in our planted model we observe the \emph{union} of a copy of $H$ with $G(n,p)$. In particular, in our model the planted $H$ is not assumed to be an induced subgraph of the input graph. Interestingly notice that the ``induced'' and ``union'' models are two distinct generalizations of the planted clique setting.

\cite{huleihel2022inferring} analyzed the power of low-degree polynomial in the induced setting for all $n^{-o(1)} \leq p \leq 1-n^{-o(1)}$ and for all $H$ that have edge density bounded away from $p$. In that case, the author proved that the computational trade-off is similar with the case of the planted clique model. For any such $H$, by simply counting edges one can detect whenever $v(H)=\omega(\sqrt{n \log n}),$ and the authors prove that a spectral method can improve this to $v(H)=\Omega(\sqrt{n}).$ Moreover, if $v(H)=o(\sqrt{n})$ the author establish that no $O(\log n)$-degree polynomial works for the $H$'s of interest. We remark again that we analyze the incomparable union model, and on top of this, we note that our results do not restrict $H$ at all.

\section{Main Results}
To describe our main contribution in more detail, we need first to give a few definitions. Recall that a graph on $n$ vertices can be represented as a list of Boolean variables $G = (G_{\{i,j\}})_{\{i,j\} \in \binom{[n]}{2} }$, where each variable is the indicator variable of one edge in the complete graph $K_n$. We start with the notion of the strong separation which will be our focus of ``success'' for a polynomial in a planted subgraph detection task.
\begin{definition}[Strong separation]\label{dfn:strong_sep}
    For two distributions $\mathbb{P}, \mathbb{Q}$ supported on graphs $\{0,1\}^{\binom{n}{2}}$ we say that an $\binom{n}{2}$-variate polynomial $f(G)$ strongly separates the distributions $\mathbb{P}$ and $\mathbb{Q}$ if it holds 
\[\max\left\{\sqrt{\mathrm{Var}_{\mathbb{P}}(f)},\sqrt{\mathrm{Var}_{\mathbb{Q}}(f)}\right\}=o(|\mathbb{E}_{\mathbb{P}}(f)-\mathbb{E}_{\mathbb{Q}}(f)|).\]
\end{definition}One should think of strong separation as a stronger condition for detection. A simple application of Chebychev's inequality implies if $f$ strongly separates $\mathbb{P}, \mathbb{Q}$ then thresholding $f$ suffices to do detect between the two distributions with vanishing Type I and II errors.

Relevant to strong separation is the concept of the advantage of a test function. \begin{definition}[(Low-degree) advantage]\label{dfn:advantage}
    For two distributions $\mathbb{P}, \mathbb{Q}$ supported on graphs $\{0,1\}^{\binom{n}{2}}$, the advantage of a real-valued test function $f: \{0,1\}^{\binom{n}{2}} \to \mathbb{R}$ for testing distribution $\mathbb{P}$ against distribution $\mathbb{Q}$ is given by $\text{Adv}(f) := \frac{\mathbb{E}_{\mathbb{P}}[f]}{\sqrt{\mathbb{E}_{\mathbb{Q}}[f^2]}},$
    and the degree-$D$ advantage for testing distribution $\mathbb{P}$ against distribution $\mathbb{Q}$ (also called, the low-degree likelihood ratio) is $ \text{Adv}^{\le D} := \max_{f \in \mathbb{R}[X]^{\le D}} \text{Adv}(f) = \max_{f \in \mathbb{R}[X]^{\le D}}\frac{\mathbb{E}_{\mathbb{P}}[f]}{\sqrt{\mathbb{E}_{\mathbb{Q}}[f^2]}}.$
\end{definition}It is known (see e.g., \cite[Lemma 7.3]{coja2022statistical}) that if $\text{Adv}^{\le D} =O(1)$ then no degree-$D$ polynomial can strongly separate $\mathbb{P}$ and $\mathbb{Q}.$

\subsection*{Walsh-Fourier basis}Recall the Walsh-Fourier basis $(\chi_{S})_{S \subseteq \binom{[n]}{2}}$ (the degree-$D$ Walsh-Fourier basis\\ $(\chi_{S})_{S \subseteq \binom{[n]}{2}: |S| \le D}$ resp.) with respect to the Erd\H{o}s-R\'{e}nyi distribution $G(n,p)$ is defined by \[\chi_{S}(G) := \prod_{\{i,j\} \in S} \frac{G_{\{i,j\}} - p}{\sqrt{p(1-p)}}.\]

\subsection*{Signed subgraph counts} Of special role in this work are the degree-$D$ polynomials called (signed) subgraph counts. That is, for any shape\footnote{A shape is an edge-induced graph.} $\mathbf{S}$ with at most $D$ edges, the signed count of $\mathbf{S}$ is the degree-$D$ polynomial given by
\begin{align}\label{eq:count}
    f_{\mathbf{S}}(G) = \sum_{S \subseteq \binom{n}{2}: S \cong \mathbf{S} } \chi_S(G),
    \end{align}
    where $S \cong \mathbf{S}$ denotes the graph isomorphism relation. 
    
Finally, we remind the reader the definition of a star graph.
\begin{definition}\label{dfn:star}
    A star graph with $t$ edges is a tree with $t+1$ vertices consisting of $t$ leaves and $1$ internal ``central" vertex, as shown in Figure \ref{fig:star-graph}. In this paper, this graph is denoted as $K_{1,t}$, as it can be viewed as the complete bipartite graph with $1$ vertex in one part and $t$ vertices in the other.
\end{definition}

\begin{figure}[h]
    \centering

\tikzset{every picture/.style={line width=0.75pt}} 

\begin{tikzpicture}[x=0.5pt,y=0.5pt,yscale=-1,xscale=1]

\draw    (99.6,109.95) -- (40.1,110.45) ;
\draw    (99.6,109.95) -- (58.1,157.45) ;
\draw    (99.6,109.95) -- (107.6,171.45) ;
\draw    (58.1,66.95) -- (99.6,109.95) ;
\draw    (99.6,109.95) -- (108.1,54.45) ;
\draw    (99.6,109.95) -- (147.1,77.95) ;
\draw    (99.6,109.95) -- (148.1,151.45) ;
\draw    (281.1,109.45) -- (221.6,109.95) ;
\draw    (281.1,109.45) -- (261.6,169.95) ;
\draw    (260.1,49.45) -- (281.1,109.45) ;
\draw    (281.1,109.45) -- (332.1,72.95) ;
\draw    (281.1,109.45) -- (333.1,148.45) ;

\draw (134.1,93.4) node [anchor=north west][inner sep=0.75pt]    {$\vdots $};

\end{tikzpicture}
    \caption{A generic star graph on the left, and the star graph $K_{1,5}$ on the right.}
    \label{fig:star-graph}
\end{figure}
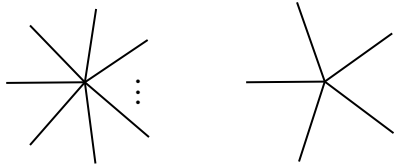

    \subsection{Main Result: Optimality of Star Counts}
    Our main result is a generic result that holds for any $p=\Omega(1)$ and any $D=O(1).$ We prove that for all planted subgraph detection tasks (per Definition \ref{dfn:planted}), i.e., for any $H=H_n$, there exists $t \leq D$ for which the $t$-star signed subgraph count is optimal among all degree-$D$ polynomials to strongly separate $\mathbb{P}$ and $\mathbb{Q}$. Formally, our main result is the following.




\begin{theorem} \label{thm:main}
    Suppose $H=H_n$ is an arbitrary subgraph, $D = O(1)$ and $p = \Omega(1)$. Then, the following holds for testing $\mathbb{P}$ and $\mathbb{Q}$ in the planted subgraph detection task corresponding to planting a copy of $H$ per Definition \ref{dfn:planted}.
    \begin{itemize}
        \item If $\limsup_{n\to \infty} \text{Adv}^{\le D} < \infty$, then no degree-$D$ polynomial $f \in \mathbb{R}[X]_{\le D}$ achieves strong separation between $\mathbb{P}$ and $\mathbb{Q}$.
        \item If $\lim_{n \to \infty} \text{Adv}^{\le D} = \infty$, then there exists $1 \leq t \leq D$ such that the signed count of a star $f_{\mathbf{K}_{1,t}}$ achieves strong separation. In particular, $\lim_{n \to \infty} \text{Adv}(f_{\mathbf{K}_{1,t}})=\infty.$
    \end{itemize}
\end{theorem}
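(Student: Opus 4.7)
The first bullet is immediate from the standard fact (cited after Definition~\ref{dfn:advantage}) that a bounded low-degree advantage precludes strong separation, so I focus on the second bullet. The plan is to proceed in three main steps. Step one establishes the clean Fourier-analytic identity
$$(\text{Adv}^{\leq D})^2 \;=\; 1 + \sum_{\mathbf{S}:\, 1 \leq e(\mathbf{S}) \leq D}\text{Adv}(f_\mathbf{S})^2.$$
The derivation uses that $\{\chi_S\}$ is orthonormal under $\mathbb{Q}$, so $(\text{Adv}^{\leq D})^2 = \sum_{|S|\leq D}\mathbb{E}_\mathbb{P}[\chi_S]^2$; conditioning on the random planted embedding $\pi$ yields $\mathbb{E}_\mathbb{P}[\chi_S] = ((1-p)/p)^{|S|/2}\,\Pr_\pi[S\subseteq E_\pi]$ where $E_\pi$ is the planted edge set; and regrouping by isomorphism type (using $\Pr_\pi[S\subseteq E_\pi]=N_H(\mathbf{S})/\#\{S'\cong\mathbf{S}\}$ by symmetry, where $N_H(\mathbf{S})$ counts copies of $\mathbf{S}$ in $H$) collapses each inner sum to exactly $\text{Adv}(f_\mathbf{S})^2$. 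Because only $O_D(1)$ shapes have at most $D$ edges, $\text{Adv}^{\leq D}\to\infty$ already forces $\text{Adv}(f_{\mathbf{S}^*})\to\infty$ for some shape $\mathbf{S}^*$.

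Step two is the main combinatorial lemma: for every shape $\mathbf{S}$ with $e(\mathbf{S})\leq D$,
$$\text{Adv}(f_\mathbf{S})^2 \;\leq\; C_{p,D}\cdot \max_{1\leq t\leq D}\text{Adv}(f_{\mathbf{K}_{1,t}})^2.$$
Using the explicit formula $\text{Adv}(f_\mathbf{S})^2\asymp_{p,D} N_H(\mathbf{S})^2/n^{v(\mathbf{S})}$ from step one, this reduces to the purely graph-theoretic bound
$$\frac{N_H(\mathbf{S})^2}{n^{v(\mathbf{S})}} \;\leq\; C\cdot \max_{t}\frac{N_H(\mathbf{K}_{1,t})^2}{n^{t+1}}.$$
The idea is to select a vertex $u^*\in V(\mathbf{S})$ of degree $t=t(\mathbf{S},H)$ and use the homomorphism bound $\mathrm{hom}(\mathbf{S},H)\leq n^{v(\mathbf{S})-1-t}\sum_{w\in V(H)} d_w^t \lesssim n^{v(\mathbf{S})-1-t}\cdot N_H(\mathbf{K}_{1,t})$: in regimes where $H$ has a very high-degree vertex one takes $t=\Delta(\mathbf{S})$ and the extra $n^{v(\mathbf{S})-1-t}$ factor is absorbed by the correspondingly large $N_H(\mathbf{K}_{1,t})$, while in bounded-degree regimes one passes to a larger $t$ whose star count grows proportionally. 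A case split on the degree profile of $H$ then selects the correct $t$ uniformly, and the $((1-p)/p)^{e(\mathbf{S})}$ prefactors are absorbed into $C$ since $p=\Omega(1)$ and $e(\mathbf{S})\leq D=O(1)$.

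Step three upgrades $\text{Adv}(f_{\mathbf{K}_{1,t}})\to\infty$ to strong separation of $f_{\mathbf{K}_{1,t}}$. One has $|\mathbb{E}_\mathbb{P} f_{\mathbf{K}_{1,t}}-\mathbb{E}_\mathbb{Q} f_{\mathbf{K}_{1,t}}|^2 = ((1-p)/p)^t N_H(\mathbf{K}_{1,t})^2$ and $\mathrm{Var}_\mathbb{Q}(f_{\mathbf{K}_{1,t}})\asymp n^{t+1}$, which is negligible by the advantage hypothesis. The remaining check $\mathrm{Var}_\mathbb{P}(f_{\mathbf{K}_{1,t}})=o(|\mathbb{E}_\mathbb{P} f - \mathbb{E}_\mathbb{Q} f|^2)$ follows from expanding $\mathbb{E}_\mathbb{P}[f_{\mathbf{K}_{1,t}}^2]$ as a sum over pairs of star copies $(S_1,S_2)$ indexed by their vertex/edge intersection pattern; the dominant contribution is exactly $(\mathbb{E}_\mathbb{P} f)^2$, and cross terms are controlled (for $p=\Omega(1)$) by the very restrictive intersection geometry of two stars. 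Chebyshev's inequality then delivers strong separation.

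The hard part will be Step~2: the naive homomorphism count loses factors of $n^{v(\mathbf{S})-1-\Delta(\mathbf{S})}$ that must be offset either by choosing $t>\Delta(\mathbf{S})$ (in regimes where $H$ has many medium-degree vertices) or by leveraging a sharper count. Carrying out this optimization uniformly in $H$ while keeping the $p$- and $D$-dependence confined to a multiplicative constant is the heart of the argument.
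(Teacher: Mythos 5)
Your Step 1 is exactly the paper's Proposition~\ref{prop:Adv}: the Fourier identity $(\text{Adv}^{\le D})^2 = \sum_{\mathbf{S}:|\mathbf{S}|\le D}\frac{M_{\mathbf{S},H}^2}{M_{\mathbf{S}}|\text{Aut}(\mathbf{S})|}\left(\frac{1-p}{p}\right)^{|\mathbf{S}|}$, with the $+1$ from $\mathbf{S}=\emptyset$. Your Step 3 matches the paper's second-moment argument in spirit, though you should be aware that the paper's Proposition~\ref{prop:intersection-ratio-bound} crucially uses that the chosen star is the \emph{maximizer} of the rescaled advantage among stars of size $\le D$, not merely one with exploding advantage; selecting an arbitrary exploding star is not enough to run that bound.

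The real problem is the lemma you state in Step 2:
\[
\text{Adv}(f_{\mathbf{S}})^2 \;\le\; C_{p,D}\cdot \max_{1\le t\le D}\text{Adv}(f_{\mathbf{K}_{1,t}})^2
\quad\text{for every shape }\mathbf{S}.
\]
This is \emph{false}. Take $p=1/2$, $\mathbf{S}$ a $2$-matching (two vertex-disjoint edges, $v(\mathbf{S})=4$, $e(\mathbf{S})=2$), and $H$ a $d$-regular graph on $k=n$ vertices with $d=\Theta(n^{1/4})$, so $m=\Theta(n^{5/4})$. Then $\text{Adv}(f_{\mathbf{K}_{1,1}})^2\asymp m^2/n^2=\Theta(n^{1/2})$, $\text{Adv}(f_{\mathbf{K}_{1,2}})^2\asymp (nd^2)^2/n^3=\Theta(1)$, and $\text{Adv}(f_{\mathbf{K}_{1,t}})^2\to 0$ for $t\ge 3$, so $\max_t \text{Adv}(f_{\mathbf{K}_{1,t}})^2 = \Theta(n^{1/2})$. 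But $M_{\mathbf{S},H}\asymp m^2$ gives $\text{Adv}(f_{\mathbf{S}})^2\asymp m^4/n^4=\Theta(n)$, which dwarfs $\Theta(n^{1/2})$ by a diverging factor. The naive homomorphism bound loses exactly the $n^{v(\mathbf{S})-1-t}$ factor you flagged, and no single choice of $t$ repairs this: the domination is genuinely multiplicative, not linear. Indeed what is true here (and what the paper exploits via Claims~\ref{claim:spanning} and~\ref{claim:disjoint-union} and their corollaries) is that the $2$-matching advantage is $\approx (\text{Adv}(f_{\mathbf{K}_{1,1}})^2)^2$, i.e.\ bounded by a \emph{product} of star advantages, which still implies that if the union's ratio explodes then some factor's ratio explodes. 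The paper's Proposition~\ref{prop:star-shape} therefore never asserts your pointwise inequality; it runs a recursive reduction---pass to a connected component (submultiplicativity over disjoint unions), pass to a spanning tree (the spanning sub-shape has larger ratio), then repeatedly delete a middle edge of any length-$3$ path---terminating at a tree of diameter $\le 2$, i.e.\ a star. You would need to replace your Step 2 lemma with a recursion of this kind, since the bound you stated cannot be proven.
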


A few remarks are in order.
\begin{remark}
   As we mentioned in the Introduction, it is not generally true that a growing advantage $\text{Adv}^{\le D} =\omega(1)$ implies that some degree-$D$ polynomial achieves strong separation (e.g., see \cite{dhawan2023detection} for a planted hypegraph setting that this fails). This is in contrast to the well-known fact (the first bullet point in Theorem \ref{thm:main}) that bounded degree-$D$ advantage rules out the existence of degree-$D$ polynomials that achieve strong separation (see e.g., \cite[Lemma 7.3]{coja2022statistical}). 
   
   Our Theorem \ref{thm:main} shows that, interestingly, for all planted subgraph detection tasks with $p=\Omega(1)$, the ``converse'' does indeed hold for $D=O(1)$: whenever the degree-$D$ advantage blows up, there indeed exists a degree-$D$ polynomial that achieves strong separation. In particular our result offers, to the best of our knowledge, the first complete characterization of the power of constant-degree polynomials in such settings.
\end{remark}

\begin{remark} Theorem \ref{thm:main} implies that for all planted $H$'s, the simple choice of counting signed star graphs is always the optimal choice for strong separation between $\mathbb{P}$ and $\mathbb{Q}$, among all constant-degree polynomials. To the best of our knowledge, this is the first result revealing this universal optimality of counting stars for all planted subgraph detection tasks in our regime.

It is natural to wonder what is the reason counting stars enjoy such general optimality. We point the reader to Section \ref{sec:proof}, and specifically Proposition \ref{prop:star-shape} for details and a proof sketch. We only would like to note here that the star structure appears naturally as a subgraph $\mathbf{S}$ whose signed count has (almost) maximum advantage $\max_{\mathbf{S}} \text{Adv}(f_{\mathbf{S}})$ among all constant-sized graphs. 
\end{remark}

\begin{remark}
    Our theorem needs two assumptions for the optimality of signed $t$-star counts. First, that the class is constant degree polynomials, i.e., $D=O(1),$ and second that $p=\Omega(1).$ It turns out that both assumptions are necessary: if either is not satisfied then the signed count of stars may fail to be optimal among constant-degree polynomials (see Section \ref{subsection:failure-of-counting-stars}).
\end{remark}

\subsection{A simple criterion using only the degree profile of $H$}

The fact that counting star graphs is optimal among $D=O(1)$-degree graphs for strong separation implies a very simple criterion for the success of the class of polynomial test functions. To present this, we fix an arbitrary subgraph $H$ and consider the planted subgraph detection task for $H$ with any noise level $p=\Omega(1)$. Suppose a statistician wishes to understand the power of degree-$D$ polynomial test functions for this setting. Based on the literature, the currently natural approach would be as follows. First, the statistician would try to show that in one regime the low-degree advantage $\text{Adv}^{\le D}=\max_{f \in \mathbb{R}^{\leq D}[X]} \text{Adv}(f)$ remains bounded and then, in the remaining regime, to design (from scratch!)  a constant-degree polynomial that works. 

Based on Theorem \ref{thm:main} we arrive at a much simpler and automated approach for how to understand both directions when $D=O(1)$. It is in fact sufficient for the statistician to only calculate the $D$ advantages of the $t$-star counts, i.e. the $D$ numbers $\text{Adv}(f_{\mathbf{K}_{1,t}}), t=1,\ldots,D.$ Indeed, by our Theorem \ref{thm:main} there exists a degree-$D$ polynomial that can strongly separate $\mathbb{P}$ and $\mathbb{Q}$, if and only if  
\begin{align}\label{eq:crit1}
    \max_{t=1,\ldots,D}\text{Adv}(f_{\mathbf{K}_{1,t}})=\omega(1).
\end{align} Moreover, it can be proven in the latter case that the signed star count $f_{\mathbf{K}_{1,t}}$ achieving the maximum advantage\footnote{We actually look at the advantage rescaled by the square root of the size of the automorphism group of $\mathbf{K}_{1,t}$, which will be later referred to as the rescaled advantage. For the ease of presentation, we defer the precise statement to the proof strategy described in Section \ref{subsection:proof-strategy} and the proof of Theorem \ref{thm:main}.} among $1 \leq t \leq D$ does also achieve strong separation, so by calculating the $D$ advantages the statistician would also arrive to the optimal test function.

On top of that, one can simplify further the condition \eqref{eq:crit1}. The advantages of star counts are in fact approximately the (rescaled) empirical moment of the degree distribution of $H$, $(d_1,\ldots,d_{v(H)})$. By straightforward mean and variance computations, it holds that
\begin{align*}
    \text{Adv}(f_{\mathbf{K}_{1,t}}) = (1 + o(1))\sum_{v \in V(H)} \frac{(d_v)_{(t)}}{|\text{Aut}(\mathbf{K}_{1,t})|^{1/2} \cdot n^{(1+t)/2}} \left(\frac{1-p}{p}\right)^{t/2}, 1 \leq t \leq D.
\end{align*} Together with the above discussion this leads to the following simple and general condition for all planted subgraph detection tasks.

\begin{corollary}\label{cor:main}[``A simple condition on the degree profile of $H$'']
    For any $H=H_n$, $p=\Omega(1)$ and the corresponding planted subgraph detection task the following holds. There exists a $D=O(1)$-degree polynomial that can achieve strong separation if and only if
    \begin{align}\label{eq:limit}
        \lim_{n \to \infty} \max_{1\le t \le D} \frac{\sum_{v \in V(H)} d_v^t }{n^{\frac{1+t}{2}} \left(\frac{p}{1-p}\right)^{\frac{t}{2}}}=\infty,
    \end{align}where $d_v := \deg_H(v)$ denotes the degree of $v$ in $H$. Moreover, if \eqref{eq:limit} holds, by choosing 
    \[t^*=\arg \max_{1 \leq t \leq D } \frac{\sum_{v \in V(H)} d_v^t }{n^{\frac{1+t}{2}} \left(\frac{p}{1-p}\right)^{\frac{t}{2}}},\]
    then we can strongly separates $\mathbb{P}$ and $\mathbb{Q}$ using $f_{\mathbf{K}_{1,t^*}}$.
\end{corollary}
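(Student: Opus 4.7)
The plan is to derive the corollary from \cref{thm:main} by an explicit moment computation that expresses $\text{Adv}(f_{\mathbf{K}_{1,t}})$ as a simple functional of the degree sequence of $H$. By \cref{thm:main}, for $D = O(1)$ the existence of a degree-$D$ polynomial achieving strong separation between $\mathbb{P}$ and $\mathbb{Q}$ is equivalent to $\max_{1 \leq t \leq D} \text{Adv}(f_{\mathbf{K}_{1,t}}) = \omega(1)$: the first bullet rules out strong separation whenever $\text{Adv}^{\le D}$ is bounded, while the second bullet promotes a blow-up of the max (which lower-bounds $\text{Adv}^{\le D}$) into actual strong separation by a star count. So the whole task reduces to computing this max and matching it to \eqref{eq:limit}.

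For the moments I would use orthonormality of the Walsh-Fourier characters $(\chi_S)$ under $\mathbb{Q} = G(n,p)$: this yields $\mathbb{E}_{\mathbb{Q}}[f_{\mathbf{K}_{1,t}}] = 0$ and $\mathbb{E}_{\mathbb{Q}}[f_{\mathbf{K}_{1,t}}^{2}]$ equal to the number of $\mathbf{K}_{1,t}$-copies in $K_n$, namely $(1+o(1)) \cdot n^{1+t}/|\text{Aut}(\mathbf{K}_{1,t})|$. Under $\mathbb{P}$, conditioning on the random planted copy $H^{\ast}$ gives $\mathbb{E}[(G_e-p)/\sqrt{p(1-p)} \mid H^{\ast}] = \sqrt{(1-p)/p}\cdot \mathbf{1}[e \in H^{\ast}]$; multiplying over $e \in S$ and summing over $S \cong \mathbf{K}_{1,t}$ collapses $\mathbb{E}_{\mathbb{P}}[f_{\mathbf{K}_{1,t}}]$ to $((1-p)/p)^{t/2}$ times the expected number of $t$-stars in $H^{\ast}$, an isomorphism invariant equal to $\sum_{v \in V(H)} \binom{d_v}{t}$. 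Taking the ratio yields exactly the advantage formula stated just before the corollary, with the falling factorials $(d_v)_{(t)} := d_v(d_v-1)\cdots(d_v-t+1)$ in the numerator.

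The last step is to replace $(d_v)_{(t)}$ by $d_v^{t}$ in the divergence criterion. The bound $(d_v)_{(t)} \leq d_v^t$ is trivial, while splitting vertices by $d_v \geq 2t$ versus $d_v < 2t$ and using $(d_v)_{(t)} \geq (d_v/2)^{t}$ on the former and $d_v^t \leq (2t)^{t}$ on the latter gives $\sum_v d_v^t \leq 2^{t} \sum_v (d_v)_{(t)} + n(2t)^{t}$. Dividing by $n^{(1+t)/2}$, the additive slack becomes $O(n^{(1-t)/2}) = O(1)$, and since $D = O(1)$ combined with $p = \Omega(1)$ keeps the constants $|\text{Aut}(\mathbf{K}_{1,t})|^{1/2}$ and $((1-p)/p)^{t/2}$ bounded above and away from zero uniformly in $t$, the max of $\text{Adv}(f_{\mathbf{K}_{1,t}})$ over $1 \leq t \leq D$ diverges if and only if the left-hand side of \eqref{eq:limit} does. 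The ``moreover'' clause follows by the same chain at the level of the argmax: the $t^{\ast}$ defined in the corollary maximizes the rescaled advantage of the star counts up to bounded factors, and the refined form of \cref{thm:main} referenced in the footnote then guarantees that $f_{\mathbf{K}_{1,t^{\ast}}}$ itself achieves strong separation. The only mild obstacle is verifying that these constant-factor discrepancies cannot shift the argmax in a way that loses the blow-up, which is benign because the range of $t$ is finite and every discrepancy depends only on $D$ and $p$.
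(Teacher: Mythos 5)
Your proposal is essentially the paper's proof: reduce via Theorem~\ref{thm:main} to the question of whether some star advantage $\mathrm{Adv}(f_{\mathbf{K}_{1,t}})$ blows up, compute the three relevant moments in terms of the degree sequence of $H$ (which is Proposition~\ref{prop:simple-moments} together with Lemma~\ref{lemma:star-shape-copy}), and then replace $\sum_v (d_v)_{(t)}$ by $\sum_v d_v^t$ in the criterion. The one place your argument diverges is the falling-factorial comparison. The paper's Lemma~\ref{lemma: falling-factorial} gives a sharp $\sum_v (d_v)_{(t)} = (1-o(1))\sum_v d_v^t$ under the divergence hypothesis, via a H\"older estimate on $\sum_v d_v^{t-1}/\sum_v d_v^t$; your threshold split at $d_v = 2t$ is more elementary and yields only $\sum_v d_v^t \le 2^t \sum_v (d_v)_{(t)} + n(2t)^t$, a constant-factor loss. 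That loss is harmless for the ``if and only if'' part, since you are only testing whether a finite maximum over $t \in [D]$ diverges. For the ``moreover'' clause (that $t^*$, as defined via $\sum_v d_v^t$, is a valid choice), both you and the paper lean on the fact that the proof of Theorem~\ref{thm:main}, and in particular Proposition~\ref{prop:intersection-ratio-bound}, tolerates a star that maximizes the rescaled advantage only up to constant (or $1\pm o(1)$) factors; you flag this explicitly as ``benign because the range of $t$ is finite,'' which is the right justification, while the paper makes the same move slightly more implicitly by asserting the $(1\pm o(1))$ identity at $t^*$. Neither treatment spells out in detail why the quoted ratio bounds inside Proposition~\ref{prop:intersection-ratio-bound} survive the bounded-factor slack, though in both cases the reasoning does go through. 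Overall: correct, and the same route, with a slightly more elementary replacement for Lemma~\ref{lemma: falling-factorial}.
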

A potential striking aspect of Corollary \ref{cor:main} is that to judge whether some constant-degree polynomial works \emph{one needs to only know the degree profile of $H$}, and no other graph property of it. For example, for a $d$-regular $H$ with vertex size $v,$ constant degree polynomials can strongly separate $\mathbb{P}$ and $\mathbb{Q}$ either \emph{for all} such graphs $H$ or \emph{for no} such $H$ at all. Other more specific properties of $H$ (e.g., the $H$'s clique number, or its girth, or even spectral properties like $H$ being an expander or not), which could naturally motivate the study of several other candidate degree-$D$ polynomials, make \emph{no difference} in whether some constant-degree polynomial can strongly separate or not.

\section{Characterization of the Optimal Test}
Based on Corollary \ref{cor:main}, the optimal constant degree polynomial test function for the planted subgraph detection task is given by the count of $t$-stars for the maximizer of $t \in [1,D]$ satisfying \eqref{eq:limit}. Narrowing down the characterization, it is the next natural question to decide for which $H=H_n$ should we be choosing smaller $t$, e.g., $t=1$ which corresponds to counting edges, or larger $t.$  

It turns out that, because of a convexity argument, it is always optimal to either choose $t=1$ or $t=D.$ Moreover, the decision between $t=1$ or $t=D$ is (almost\footnote{Excluding a small ``gray'' area, see Figure \ref{fig:char}}) entirely based on the maximum degree of the planted subgraph $H$. If the maximum degree is less than an appropriate threshold $\approx (np/(1-p))^{1/2}$, one should simply count edges. If the maximum degree is much bigger than $\approx (np/(1-p))^{1/2}$, the optimal test is to count large stars (see Figure \ref{fig:char}). To provide the details, we first need a definition.
\begin{definition}
    We say a test $T$ is the optimal test among a class of tests $\mathcal{T}$ for testing $\mathbb{P}$ against $\mathbb{Q}$ if whenever there exists a test $\Tilde{T} \in \mathcal{T}$ that strongly separate $\mathbb{P}$ and $\mathbb{Q}$, the test $T$ also does so.
\end{definition}We now state our main theorem for this section discussing when $t=1$ or $t$ large is optimal, and give an (almost) complete and simple characterization for when constant-degree polynomials work.
\begin{theorem} \label{thm:degree-characterization}
    Denote $\Delta = \max_{i \in V(H)} d_i$, where $d_i = \deg_H(i)$ and $m=|E(H)|$. Then, for $p = \Omega(1)$, we have the following characterization for the optimal test among constant degree polynomial tests for the planted subgraph detection problem of testing $\mathbb{P}$ against $\mathbb{Q}$ as defined in Definition \ref{dfn:planted}: 
    \begin{itemize}
        \item If $\Delta=O\left( \left(n \frac{p}{1-p}\right)^{1/2}\right)$, then the optimal test is to count signed edges, i.e., $f = \sum_{\{i,j\} \in \binom{V}{2}} \chi_{\{i,j\}}$. In particular, some constant-degree polynomial works if and only if $m = \omega\left(n \left(\frac{p}{1-p}\right)^{1/2}\right)$.
        \item If $\Delta \ge \left(n \frac{p}{1-p}\right)^{1/2 + \varepsilon}$ for some constant $\varepsilon > 0$, then the optimal test is to count signed ``large'' stars , i.e., $f = \sum_{S \subseteq \binom{V}{2}: S \cong \mathbf{S} } \chi_S$ where $S = \mathbf{K}_{1,t}$ for a large enough constant $t$, and moreover, it is always possible to set $t = \lceil \frac{3}{2\varepsilon} \rceil + 1$ so that the above $f$ achieves strong separation.
    \end{itemize}

    Moreover, for any $D = O(1)$, the optimal test among degree-$D$ polynomial tests for testing $\mathbb{P}$ against $\mathbb{Q}$ is either to count signed edges or to count signed $D$ edge stars $K_{1,D}$.
\end{theorem}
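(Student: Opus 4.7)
The plan is to apply Corollary \ref{cor:main} and focus entirely on the sequence
\[
A_t := \frac{\sum_{v \in V(H)} d_v^t}{n^{(1+t)/2}\,(p/(1-p))^{t/2}}, \qquad 1 \le t \le D,
\]
which up to constants equals the rescaled advantage of the signed star count $f_{\mathbf{K}_{1,t}}$ and whose maximum over $t$ simultaneously determines when some degree-$D$ polynomial succeeds and which star count is optimal.

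The key structural observation, which immediately yields the ``moreover'' assertion, is that $\log A_t$ is convex in $t$. The only non-linear term is $\log \sum_v d_v^t$, whose second derivative equals the variance of $\log d_v$ under the probability weights proportional to $d_v^t$ (the standard convexity of log-moments), while the rest of $\log A_t$, namely $-\frac{1+t}{2}\log n - \frac{t}{2}\log(p/(1-p))$, is linear in $t$. A convex function on $[1,D]$ is maximized at an endpoint, so $\arg\max_{1 \le t \le D} A_t \in \{1, D\}$, and by Corollary \ref{cor:main} the optimal degree-$D$ polynomial test is either $f_{\mathbf{K}_{1,1}}$ (edges) or $f_{\mathbf{K}_{1,D}}$.

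For the first bullet, the elementary bound $d_v^t = d_v \cdot d_v^{t-1} \le d_v \Delta^{t-1}$ gives $\sum_v d_v^t \le 2m \Delta^{t-1}$, and substituting $\Delta = O((np/(1-p))^{1/2})$ yields $A_t \le C^{t-1} A_1$ for an absolute constant $C$ and every constant $t$. Hence $A_1$ dominates up to constants, edges are optimal, and the success criterion becomes $A_1 = 2m/(n(p/(1-p))^{1/2}) = \omega(1)$, equivalent to $m = \omega(n(p/(1-p))^{1/2})$. For the second bullet, the opposite-direction bound $\sum_v d_v^t \ge \Delta^t$ combined with $\Delta \ge (np/(1-p))^{1/2+\varepsilon}$ gives
\[
A_t \ge n^{t\varepsilon - 1/2}\left(\frac{p}{1-p}\right)^{t\varepsilon};
\]
choosing $t = \lceil 3/(2\varepsilon) \rceil + 1$ guarantees $t\varepsilon - 1/2 \ge 1 + \varepsilon$, so $A_t$ grows at least as $n^{1+\varepsilon}$, which combined with the rescaled-advantage formula preceding Corollary \ref{cor:main} makes $f_{\mathbf{K}_{1,t}}$ achieve strong separation.

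The main (minor) obstacle is upgrading ``$A_t = \omega(1)$ at this specific $t$'' to genuine strong separation of $f_{\mathbf{K}_{1,t}}$ itself (not merely of the argmax star over $[1, D]$). This is handled by directly estimating the moments of the single polynomial $f_{\mathbf{K}_{1,t}}$: $\mathrm{Var}_{\mathbb{Q}}(f_{\mathbf{K}_{1,t}})$ is read off from orthogonality of the Walsh--Fourier basis, $\mathrm{Var}_{\mathbb{P}}(f_{\mathbf{K}_{1,t}})$ is of the same order under $p = \Omega(1)$ and constant $t$ via the standard planted-model expansion, and the mean gap $|\mathbb{E}_{\mathbb{P}} f_{\mathbf{K}_{1,t}} - \mathbb{E}_{\mathbb{Q}} f_{\mathbf{K}_{1,t}}|$ is exactly what drives $A_t$. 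All remaining ingredients are routine inequalities.
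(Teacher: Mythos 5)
Your approach is essentially the same as the paper's and is correct in its main structure: reduce everything to the quantity $A_t = \sum_v d_v^t / \bigl(n^{(1+t)/2}(p/(1-p))^{t/2}\bigr)$ via Corollary \ref{cor:main}, observe it is convex in $t$ so the argmax over $\{1,\dots,D\}$ sits at an endpoint, and then handle the two regimes by the elementary bounds $\sum_v d_v^t \le 2m\Delta^{t-1}$ and $\sum_v d_v^t \ge \Delta^t$. Your log-convexity justification (second derivative of $\log\sum_v d_v^t$ equals a variance of $\log d_v$, or equivalently Cauchy--Schwarz) is a slightly cleaner route than the paper's appeal to plain convexity of $A_t$, but it buys the same endpoint conclusion. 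The paper additionally writes out a sandwich $C_1 A_t - C_2 \le \sum_v (d_v)_{(t)}/\bigl(n^{(1+t)/2}(p/(1-p))^{t/2}\bigr) \le A_t$ to be careful about the falling-factorial vs.\ power distinction; this is largely cosmetic since Corollary \ref{cor:main} already states the criterion and the choice of $t^*$ in terms of $A_t$, which is what you are using.

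The one place where your proposal is genuinely off is the closing paragraph. You flag the real issue --- that $A_{t_0} = \omega(1)$ at a \emph{specific} $t_0 = \lceil 3/(2\varepsilon)\rceil + 1$ does not on its own imply $f_{\mathbf{K}_{1,t_0}}$ achieves strong separation, because Corollary \ref{cor:main} only guarantees this for the argmax star --- but the repair you propose (``directly estimating the moments of the single polynomial $f_{\mathbf{K}_{1,t_0}}$'' and asserting $\mathrm{Var}_{\mathbb P}$ is ``of the same order'' via ``routine inequalities'') is not actually routine, and is not what makes the paper's proof go through. The second-moment bound $\mathbb{E}_{\mathbb P}[f_{\mathbf S}^2]/\mathbb{E}_{\mathbb P}[f_{\mathbf S}]^2 \le 1+o(1)$ is controlled by Proposition \ref{prop:intersection-ratio-bound}, whose proof (both in Case~1 via $S_1', S_2'$ and in Case~2 for $x \ge t/2$) crucially uses that $\mathbf S$ is a maximizer of $\frac{M_{\mathbf S,H}^2}{M_{\mathbf S}}(\tfrac{1-p}{p})^{|\mathbf S|}$ among stars. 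Attempting the moment estimate for an arbitrary $t_0$ would force you to re-prove that proposition without maximality, which can fail. The clean fix --- which is what the paper does implicitly when it says the $\mathbf{K}_{1,t_0}$ advantage ``dominates that evaluated at the edge graph'' --- is to show $A_{t_0} \gg A_1$: combining $\sum_v d_v^{t_0} \ge \Delta^{t_0}$, the assumption $\Delta \ge (np/(1-p))^{1/2+\varepsilon}$, and the trivial bound $2m = \sum_v d_v \le n\Delta$ gives
\[
\frac{A_{t_0}}{A_1} \;\ge\; \frac{\Delta^{t_0-1}}{n^{(t_0+1)/2}\,(p/(1-p))^{(t_0-1)/2}} \;\ge\; n^{\varepsilon(t_0-1)-1}\Bigl(\tfrac{p}{1-p}\Bigr)^{\varepsilon(t_0-1)} = \omega(1),
\]
since $\varepsilon(t_0-1) \ge 3/2 > 1$ and $p = \Omega(1)$. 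By the convexity/endpoint claim applied with $D = t_0$, the argmax is then $t_0$ itself, and Corollary \ref{cor:main} (or the proof of Theorem \ref{thm:main}) now applies verbatim. Replace your ``routine moment estimation'' paragraph with this three-line domination argument and the proof is complete and matches the paper's.
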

The statement of Theorem \ref{thm:degree-characterization} is best visualized in Figure \ref{fig:char}.
\begin{remark}
    Although the characterization in Theorem \ref{thm:degree-characterization} does not capture a middle region of \\$\omega\left(\left(n \frac{p}{1-p}\right)^{1/2}\right) \le \Delta \le \left(n \frac{p}{1-p}\right)^{1/2 + o(1)}$, the optimal test for this region can still be found by looking for the constant-sized star graph that maximizes the advantage of its signed counting polynomial as stated in Theorem \ref{thm:main}. Moreover, our characterization in Theorem \ref{thm:degree-characterization} implies that the optimal test in this region is also one of the two extremes: to count signed edges or signed large stars.
\end{remark}

\begin{figure}[ht]
    \centering
\tikzset{every picture/.style={line width=0.75pt}} 
\begin{tikzpicture}[x=0.75pt,y=0.75pt,yscale=-0.9,xscale=0.9]


\draw  [line width=1.5]  (55,50) -- (445,50) -- (445,300) -- (55,300) -- cycle ;
\draw  [fill={rgb, 255:red, 130; green, 150; blue, 255 }  ,fill opacity=0.4 ] (245,175) -- (445,175) -- (445,300) -- (245,300) -- cycle ;
\draw  [fill={rgb, 255:red, 130; green, 255; blue, 135 }  ,fill opacity=0.4 ] (245,50) -- (445,50) -- (445,175) -- (245,175) -- cycle ;
\draw  [fill={rgb, 255:red, 255; green, 40; blue, 50 }  ,fill opacity=0.4 ] (55,50) -- (245,50) -- (245,175) -- (55,175) -- cycle ;
\draw  [fill={rgb, 255:red, 155; green, 155; blue, 155 }  ,fill opacity=0.5 ] (230,175) -- (245,175) -- (245,300) -- (230,300) -- cycle ;
\draw [line width=1.5]    (55,300) -- (455,300) ;
\draw [shift={(470,300)}, rotate = 180] [fill={rgb, 255:red, 0; green, 0; blue, 0 }  ][line width=0.08]  [draw opacity=0] (15.6,-3.9) -- (0,0) -- (15.6,3.9) -- cycle    ;
\draw [line width=1.5]    (55,300) -- (55,24) ;
\draw [shift={(55,20)}, rotate = 90] [fill={rgb, 255:red, 0; green, 0; blue, 0 }  ][line width=0.08]  [draw opacity=0] (15.6,-3.9) -- (0,0) -- (15.6,3.9) -- cycle    ;


\footnotesize\draw (285,220) node [anchor=north west][inner sep=0.75pt]   [align=left] { Counting Large Stars\\ \ \ \ \ \ \ \ \ Succeeds};
\footnotesize\draw (280,90) node [anchor=north west][inner sep=0.75pt]   [align=left] { Both Counting Edges \\ \ \ \ and Large Stars\\ \ \ \ \ \ \ \ \ \ Succeed};
\footnotesize\draw (95,100) node [anchor=north west][inner sep=0.75pt]   [align=left] {Counting Edges\\ \ \ \ \ \ \ Succeeds};
\footnotesize\draw (100,225) node [anchor=north west][inner sep=0.75pt]   [align=left] {CD Impossible};
\draw (180,305) node [anchor=north west][inner sep=0.75pt]    {\small $\left(\tfrac{np}{1-p}\right)^{1/2}$};
\draw (245,305) node [anchor=north west][inner sep=0.75pt]    {\small $\left(\tfrac{np}{1-p}\right)^{1/2+o(1)}$};
\draw (440,312) node [anchor=north west][inner sep=0.75pt]  [font=\small]  {$\Delta$};
\draw (38,310) node [anchor=north west][inner sep=0.75pt]  [font=\small]  {$0$};
\draw (30,43.4) node [anchor=north west][inner sep=0.75pt]  [font=\small]  {$m$};
\draw (3,170.4) node [anchor=north west][inner sep=0.75pt]  [font=\footnotesize]  {$n\sqrt{\tfrac{p}{1-p}}$};
\end{tikzpicture}
    \caption{
    Phase-transition diagram characterized by Theorem~\ref{thm:degree-characterization}. The x-axis is the maximum degree $\Delta$ and the y-axis is the number of edges $m$ of subgraph $H$. CD is an abbreviation of constant degree.}
    \label{fig:char}
    
\end{figure}
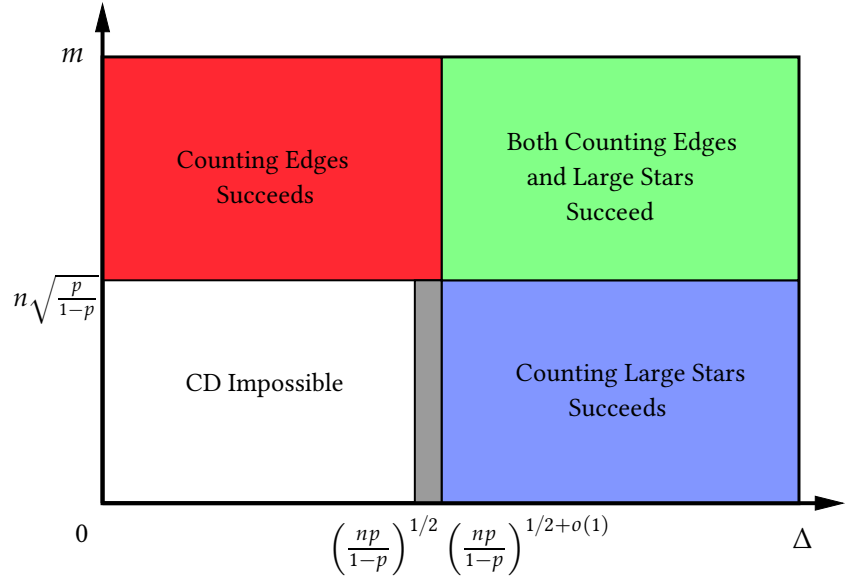
\subsection{Applications}\label{sec:apps} To show the applicability of Theorem~\ref{thm:degree-characterization}, we apply it to a number of examples.

\subsection*{Planted dense subgraph.}
Let $n, k=k_n \in \mathbb{N}$ and $p=p_n,q=q_n \in (0,1)$. The first example we consider is the planted dense subgraph (PDS) setting, denoted by $\mathrm{PDS}(k,p,q)$. To describe it, we first draw $H$ from $G(k,q).$ Then we consider the planted detection task per Definition \ref{dfn:planted} for planted $H$ and $p.$ Notice that this is called a PDS setting, as in the planted model $\mathbb{P},$ the induced subgraph on the $k$ vertices corresponding to $H$ is ``denser'' compared to the rest edges of the graph. Indeed, in the planted instance, every edge using only vertices of $H$ appears marginally with probability $p+(1-p)q>p$ while the rest of the edges with probability $p$.

Notice that, as long as $kq=\omega(\log k)$, $H$ has maximum degree $(1+o(1))(k-1)q$ and edges $(1+o(1))k^2q/2$ with high probability as $n$ grows to infinity (see Lemma \ref{lemma: degree-edge-bound} in the appendix). Using Theorem \ref{thm:degree-characterization} we directly conclude the following.
\begin{corollary}\label{cor:const}
     If $p=\Omega(1)$, a constant-degree polynomial can achieve strong separation in $\mathrm{PDS}(k,p,q)$ if and only if $ k = \omega\left(\frac{\sqrt{n}}{\sqrt{q}(1-p)^{1/4}}\right).$ Moreover, if $ k = \omega\left(\frac{\sqrt{n}}{\sqrt{q}(1-p)^{1/4}}\right)$, counting edges achieves strong separation.
\end{corollary}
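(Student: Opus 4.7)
The plan is to invoke Theorem~\ref{thm:degree-characterization} directly, using the concentration of the degree profile of a random $H \sim G(k,q)$. First I would appeal to Lemma~\ref{lemma: degree-edge-bound} to pass to a high-probability event on which $H$ has maximum degree $\Delta = (1+o(1))(k-1)q$ and $m = (1+o(1))\binom{k}{2}q$ edges; since strong separation is a condition on low moments and the atypical realizations of $H$ contribute only $o(1)$ mass, it suffices to establish the conclusion on this event.

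For the ``if'' direction together with the ``moreover'' statement, I would observe that the hypothesis $k = \omega\bigl(\sqrt{n}/(\sqrt{q}\,(1-p)^{1/4})\bigr)$ is equivalent to $k^2 q = \omega\bigl(n/\sqrt{1-p}\bigr)$, and since $\sqrt{p} = \Theta(1)$ under $p = \Omega(1)$, this rearranges to $m = \omega\bigl(n\sqrt{p/(1-p)}\bigr)$. A direct mean--variance computation for the signed edge count $\sum_{\{i,j\}} \chi_{\{i,j\}}$ (which does not involve $\Delta$) then delivers strong separation, giving both the sufficient direction and the ``moreover'' conclusion in one stroke.

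For the ``only if'' direction I would argue the contrapositive. Suppose $k = O\bigl(\sqrt{n}/(\sqrt{q}\,(1-p)^{1/4})\bigr)$; then $m = O\bigl(n\sqrt{p/(1-p)}\bigr)$ and $\Delta \asymp kq \lesssim \sqrt{nq}/(1-p)^{1/4}$. The crucial verification is that we fall in the first-bullet regime of Theorem~\ref{thm:degree-characterization}, i.e. $\Delta = O\bigl(\sqrt{np/(1-p)}\bigr)$: the ratio of the two bounds simplifies, after squaring, to $q\sqrt{1-p}/p$, which is $O(1)$ because $q\sqrt{1-p} \leq 1$ and $p = \Omega(1)$. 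The first bullet of Theorem~\ref{thm:degree-characterization} then identifies the optimal constant-degree test as signed edge counting and says it succeeds iff $m = \omega(n\sqrt{p/(1-p)})$; since $m$ fails this bound, no constant-degree polynomial can achieve strong separation.

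The (minor) technical point I anticipate is cleanly justifying the passage from the mixture-over-$H$ planted model to a typical realization of $H$; this should be routine given the polynomial nature of the tests and the vanishing probability of atypical $H$. The genuinely load-bearing step is the $O(1)$ bound on $q\sqrt{1-p}/p$, which is where $p = \Omega(1)$ is essential: without it, $\Delta$ could leave the ``edges-suffice'' regime of Theorem~\ref{thm:degree-characterization} and land in the large-star regime, at which point the ``counting edges suffices'' half of the statement would break.
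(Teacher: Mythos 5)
Your proposal is correct and follows the same route the paper intends — pass to a typical $H\sim G(k,q)$ via Lemma~\ref{lemma: degree-edge-bound}, translate the $k$-threshold into the $m$- and $\Delta$-thresholds of Theorem~\ref{thm:degree-characterization} using $m\approx k^2q/2$ and $\Delta\approx kq$ and the fact that $p^{1/4}=\Theta(1)$, and then invoke the first bullet of Theorem~\ref{thm:degree-characterization} for impossibility. The paper's own proof is compressed into the sentence ``Using Theorem~\ref{thm:degree-characterization} we directly conclude,'' so you have essentially reconstructed what was left implicit. The $O(1)$ bound on $q\sqrt{1-p}/p$ is exactly the load-bearing calculation needed to land in the first bullet of Theorem~\ref{thm:degree-characterization} on the negative side, and you get it right.

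Your choice to handle the ``if'' and ``moreover'' directions by a direct mean--variance computation for the signed edge count, rather than trying to read it off Theorem~\ref{thm:degree-characterization}, is actually a genuine improvement in rigor rather than just a shortcut: when $k$ is well above the threshold (e.g.\ planted clique with $k=n^{3/4}$, $p=1/2$) one has $\Delta\gg (np/(1-p))^{1/2}$, so the first bullet of Theorem~\ref{thm:degree-characterization} is not applicable, and the second bullet only certifies that counting \emph{large stars} works — it says nothing about edges. The direct computation closes this gap cleanly: $\mathbb{E}_{\mathbb{P}}[f_{\mathbf{K}_{1,1}}\mid \bold{H}]=m\sqrt{(1-p)/p}$ is deterministic, $\Var_{\mathbb{P}}[f_{\mathbf{K}_{1,1}}]=\binom n2-m\le \Var_{\mathbb{Q}}[f_{\mathbf{K}_{1,1}}]$, so strong separation reduces to the advantage condition $m=\omega\bigl(n\sqrt{p/(1-p)}\bigr)$, which is exactly your translated hypothesis. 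I would flag this explicitly when you write it up, since it is a place where ``directly conclude'' glosses over a genuine case distinction.

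Two minor points worth a sentence in a final write-up: (i) Lemma~\ref{lemma: degree-edge-bound} requires $kq=\omega(\log k)$, which should be stated as a standing assumption (the paper does so in the preceding sentence); and (ii) the passage from a typical realization of $H$ to the $\mathrm{PDS}$ model is legitimate here precisely because the test $f_{\mathbf{K}_{1,1}}$ does not depend on $H$, so conditioning on a typical $H$ is harmless for both the positive and negative directions — that is the clean justification for the reduction you flagged as ``routine.''
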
For instance, if $k=n^{\beta}$, $p=1-n^{-\gamma}$ and $q=n^{-\alpha}$ for constants $\alpha,\beta, \gamma \in (0,1)$ we immediately conclude the phase diagram for this $\mathrm{PDS}(k,p,q)$ for constant-degree polynomials: they work if and only if $\beta>1/2+\alpha/2+\gamma/4$ (see Figure \ref{fig:PDS}). 

 It is worth noting that PDS has been commonly studied under the slightly different following definition we call $\mathrm{PDS'}(k,p,p')$  for $0<p=p_n<p'=p'_n<1$ and $k=k_n$ (see e.g., \cite{brennan2018reducibility}). $\mathrm{PDS'}(k,p,p')$ is the detection task between the null $\mathbb{Q}=G(n,p)$ and the planted model $\mathbb{P}$ where now the observed graph is sampled like a $G(n,p)$ except that the edges between $k$ vertices, chosen uniformly at random, are sampled now with probability $p'>p$. Notice this definition, differently from the one using Definition \ref{dfn:planted}, is not assuming that the planted instance is the union of $G(n,p)$ with a randomly placed $H \sim G(n,q).$ Yet, for $q=(p'-p)/(1-p)$ the two planted models of $\mathrm{PDS}(k,p,q)$ and $\mathrm{PDS'}(k,p,p')$ have marginally (per edge) the same law and it is natural to expect to have a similar computational transition.
 
 Indeed, for $\mathrm{PDS'}(k,p,p')$  in the specialized regime $\Omega(1)=p=1-\Omega(1)$, $p'-p=\Theta(n^{-\alpha})$ and $k=\Theta(n^{\beta})$ it is known, by using an average-case reduction from planted clique \cite{brennan2018reducibility}, that there is conjectured hard phase if and only if $\beta<1/2+\alpha/2$. By choosing the matching parameter $q=(p'-p)/(1-p)=\Theta(n^{-\alpha})$ and $\gamma=0$ we arrive at the same conclusion for $\mathrm{PDS}(k,p,q)$: constant degree polynomials work if and only if $\beta<1/2+\alpha/2$ and counting edges works in the opposite regime.

 We are not aware of any comparable low-degree lower bound result for either $\mathrm{PDS'}(k,p,p')$ and $\mathrm{PDS}(k,p,q).$


\begin{figure}[ht]
    \centering  
\tikzset{every picture/.style={line width=0.75pt}} 

\begin{tikzpicture}[x=0.75pt,y=0.75pt,yscale=-0.7,xscale=0.7,]

\draw    (30,61) -- (490,61) -- (490,60.07) ;
\draw [shift={(490,60)}, rotate = 178.75] [fill={rgb, 255:red, 0; green, 0; blue, 0 }  ][line width=0.08]  [draw opacity=0] (8.93,-4.29) -- (0,0) -- (8.93,4.29) -- cycle    ;
\draw [line width=2.25]    (30,62) -- (240,62) ;
\draw [color={rgb, 255:red, 0; green, 50; blue, 255 }  ,draw opacity=1 ][line width=2.25]    (240,62) -- (480,62) ;
\draw    (240,68) -- (240,61) ;

\draw (80,21) node [anchor=north west][inner sep=0.75pt]  [font=\footnotesize] [align=left] {\ CD Impossible};
\draw (300,11) node [anchor=north west][inner sep=0.75pt]  [font=\footnotesize] [align=left] {Counting edges\\\ \ \ \ \ succeeds};
\draw (490,64.4) node [anchor=north west][inner sep=0.75pt]  [font=\small]  {$\beta $};
\draw (210,73.4) node [anchor=north west][inner sep=0.75pt]  [font=\footnotesize]  {$\tfrac{2+2\alpha + \gamma }{4}$};
\end{tikzpicture}

    \caption{Phase-transition diagram for $\mathrm{PDS}(k,p,q)$, with $k = n^{\beta}$, $p = 1 - n^{-\gamma}$ and $q = n^{\alpha}$.}
    \label{fig:PDS}
  
\end{figure}
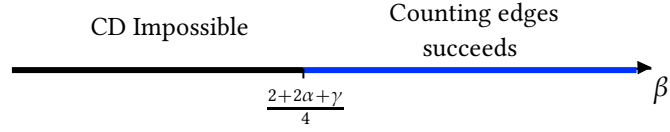

 
\subsection*{Planted clique and planted independent set.}
When we choose $q = 1$ and $p=1/2$ for $\mathrm{PDS}(k,p,q),$ the setting simply corresponds to the detection task of the planted clique problem. Corollary \ref{cor:const} then directly yields the well-known $k=\Theta(\sqrt{n})$ computational phase transition for constant-degree polynomials for planted clique \cite{alon1998finding, barak2019nearly}.

Interestingly, for $p = 1 - \tfrac{d}{n}, d=o(n)$ and $q=1,$ $\mathrm{PDS}(k,p,q)$ now maps to the detection task of the planted independent set problem. Indeeed, $\mathrm{PDS}(k,1-d/n,1)$ is the detection setting between $G(n,1-d/n)$ and $G(n,1-d/n)$ union a random $k$-clique. By equivalently considering the complements of the observation graphs, we need to detect between $G(n,d/n)$ and $G(n,d/n)$ where $k$ vertices are constrained to induce an independent set, known as the planted independent set model. Corollary \ref{cor:const} implies that constant degree polynomials can detect if and only if $k = \omega(n^{3/4}/d^{1/4})$. Besides its widespread study in the literature, we are not aware of any other result studying the computational hardness of detecting a planted independent set (while see e.g., when $d=O(1)$ \cite{wein2022optimal} for low-degree lower bounds for estimation, and  \cite{jones2022sum} for certification).


\subsection*{Planted bipartite clique.}
Given the previous examples where counting edges is always constant-degree optimal, one may wonder if the count of large stars is the optimal constant-degree polynomial in some natural setting. For this reason, we now consider a planted bipartite clique setting $\mathrm{PBC}(a,b,p)$ for $a=a_n,b=b_n \in \mathbb{N}$ and $p=p_n \in (0,1)$ with $a \ge b$, where we simply choose the planted $H$ to be the bipartite clique $K_{a,b}$.
The maximum degree of $H$ is then clearly $\Delta = a$. Using Theorem~\ref{thm:degree-characterization} we arrive at the following richer computational diagram (see Figure \ref{fig:BPC}).
\begin{corollary}
    For $\alpha, \beta, \gamma \in (0,1)$, if $p = 1 - n^{-\gamma}$, $a = n^{\alpha}$, $b = n^{\beta}$ we have for $\mathrm{PBC}(a,b,p)$:
    \begin{itemize}
    \item If $2\alpha \leq 1+\gamma$ and $\beta \leq \tfrac{1}{2}$, constant-degree polynomial test fails;
    \item  $\beta > \tfrac{1}{2}$ if and only if counting edges achieves strong separation;
    \item If  $2\alpha > 1 + \gamma + \epsilon$ for some constant $\varepsilon > 0$, counting large stars achieves strong separation.
\end{itemize}
\end{corollary}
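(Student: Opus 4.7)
The plan is to specialize Theorem~\ref{thm:degree-characterization} to $H=K_{a,b}$ with $a\ge b$; all three bullets then follow by reading off the relevant parameters. For $H=K_{a,b}$ the maximum degree is $\Delta=a=n^{\alpha}$, attained at each of the $b$ vertices on the size-$b$ side (each of which connects to all $a$ vertices on the opposite side), and the edge count is $m=ab=n^{\alpha+\beta}$. Substituting $p=1-n^{-\gamma}$ gives $\tfrac{p}{1-p}=\Theta(n^{\gamma})$, so the two critical scales in Theorem~\ref{thm:degree-characterization} become $\bigl(\tfrac{np}{1-p}\bigr)^{1/2}=\Theta(n^{(1+\gamma)/2})$ and $n\sqrt{p/(1-p)}=\Theta(n^{1+\gamma/2})$.

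The first bullet is immediate: the hypothesis $2\alpha\le 1+\gamma$ is exactly the condition $\Delta=O((np/(1-p))^{1/2})$ of the first case of Theorem~\ref{thm:degree-characterization}, which identifies the signed edge count as the constant-degree-optimal polynomial and supplies the success criterion $m=\omega(n^{1+\gamma/2})$, i.e.\ $\alpha+\beta>1+\gamma/2$. Adding the hypothesis $\beta\le 1/2$ then yields $\alpha+\beta\le(1+\gamma)/2+1/2=1+\gamma/2$, precluding strong separation by any constant-degree polynomial. Similarly, for the third bullet, $2\alpha>1+\gamma+\varepsilon$ translates into $\Delta\ge(np/(1-p))^{1/2+\varepsilon'}$ for some constant $\varepsilon'>0$, placing us in the second case of Theorem~\ref{thm:degree-characterization}, which hands us the explicit constant $t=\lceil 3/(2\varepsilon')\rceil+1$ for which $f_{\mathbf{K}_{1,t}}$ strongly separates $\mathbb{P}$ and $\mathbb{Q}$.

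The second bullet is the only step that is not a one-line reading of Theorem~\ref{thm:degree-characterization}. I would combine the signed edge-count success criterion $\alpha+\beta>1+\gamma/2$ with the asymmetry $\alpha\ge\beta$, partitioning the parameter space into the sub-regime $2\alpha\le 1+\gamma$ (where Theorem~\ref{thm:degree-characterization} itself supplies the edge-count criterion) and the sub-regime $2\alpha>1+\gamma$ (where the edge-count criterion follows from the direct mean-variance computation for $f=\sum_{\{i,j\}\in\binom{[n]}{2}}\chi_{\{i,j\}}$ under $\mathbb{P}$ and $\mathbb{Q}$). In each sub-regime one matches the threshold $\alpha+\beta>1+\gamma/2$ against the cleaner form $\beta>1/2$ stated in the corollary by using the assumption $\alpha\ge\beta$ and the boundary identity $\alpha=(1+\gamma)/2$ separating the two sub-regimes.

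The main obstacle I anticipate is in the second bullet: one must track carefully which sub-region of $(\alpha,\beta,\gamma)$-space is being addressed in order to see that the threshold $\alpha+\beta>1+\gamma/2$ produced by Theorem~\ref{thm:degree-characterization} collapses to the simpler form $\beta>1/2$. Every other step is a direct substitution into Theorem~\ref{thm:degree-characterization}, requiring no additional analytical machinery beyond the mean-variance calculation for the signed edge count already present in that proof.
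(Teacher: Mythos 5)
Your applications of Theorem~\ref{thm:degree-characterization} to the first and third bullets are correct: with $\Delta = a = n^{\alpha}$, $m = ab = n^{\alpha+\beta}$, and $p/(1-p) = \Theta(n^{\gamma})$, the two critical scales become $\left(np/(1-p)\right)^{1/2} = \Theta(n^{(1+\gamma)/2})$ and $n\left(p/(1-p)\right)^{1/2} = \Theta(n^{1+\gamma/2})$; the first bullet follows since $\alpha + \beta \le (1+\gamma)/2 + 1/2 = 1 + \gamma/2$ precludes $m = \omega(n^{1+\gamma/2})$, and the third bullet follows by translating $\alpha > (1+\gamma)/2 + \varepsilon/2$ into $\Delta \ge \left(np/(1-p)\right)^{1/2+\varepsilon'}$ with $\varepsilon' = \varepsilon/(2(1+\gamma))$.

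Your treatment of the second bullet, however, has a genuine gap that the proposed case analysis cannot close. As you correctly derive, the criterion for the signed edge count to achieve strong separation is $\alpha + \beta > 1 + \gamma/2$, and this condition is not equivalent to $\beta > 1/2$ even under the constraint $\alpha \geq \beta$. Take $\alpha = \beta = 3/5$ and $\gamma = 1/2$: then $\beta > 1/2$ but $\alpha + \beta = 6/5 < 5/4 = 1 + \gamma/2$, so the edge count advantage stays bounded and edge counting fails, contradicting the forward direction. Conversely, take $\alpha = 4/5$, $\beta = 2/5$, $\gamma = 1/10$: then $\alpha + \beta = 6/5 > 21/20 = 1 + \gamma/2$, so the edge count advantage explodes and edge counting succeeds (for the edge shape the only non-trivial intersection pattern in Proposition~\ref{prop:intersection-ratio-bound} is $S_1 = S_2$, whose bound reduces precisely to this advantage condition), yet $\beta < 1/2$, contradicting the reverse direction. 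The condition $\alpha + \beta > 1 + \gamma/2$ collapses to $\beta > 1/2$ only on the single hyperplane $\alpha = (1+\gamma)/2$, not in an open neighborhood of it, so there is no partition of parameter space into sub-regimes in which one becomes the other. The correct threshold is exactly the one you derived from Theorem~\ref{thm:degree-characterization}, namely $\alpha + \beta > 1 + \gamma/2$, and you should state it in that form; the cleaner form $\beta > 1/2$ appears to be an imprecision in the corollary's own statement rather than something your case analysis can recover.
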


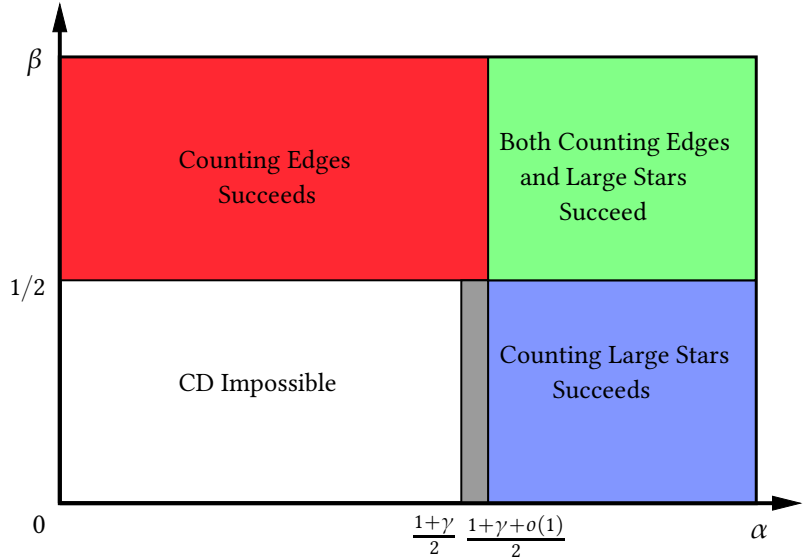
\begin{figure}
    \centering

\tikzset{every picture/.style={line width=0.75pt}} 

\begin{tikzpicture}[x=0.75pt,y=0.75pt,yscale=-0.9,xscale=0.9]


\draw  [line width=1.5]  (55,50) -- (445,50) -- (445,300) -- (55,300) -- cycle ;
\draw  [fill={rgb, 255:red, 130; green, 150; blue, 255 }  ,fill opacity=0.4 ] (295,175) -- (445,175) -- (445,300) -- (295,300)  -- cycle ;
\draw  [fill={rgb, 255:red, 130; green, 255; blue, 135 }  ,fill opacity=0.4 ] (295,50) -- (445,50) -- (445,175) -- (295,175)  -- cycle ;
\draw  [fill={rgb, 255:red, 255; green, 40; blue, 50 }  ,fill opacity=0.4 ] (55,50) -- (295,50) -- (295,175) -- (55,175) -- cycle ;
\draw  [fill={rgb, 255:red, 155; green, 155; blue, 155 }  ,fill opacity=0.5 ] (280,175) -- (295,175) -- (295,300) -- (280,300) -- cycle ;
\draw [line width=1.5]    (55,300) -- (455,300) ;
\draw [shift={(470,300)}, rotate = 180] [fill={rgb, 255:red, 0; green, 0; blue, 0 }  ][line width=0.08]  [draw opacity=0] (15.6,-3.9) -- (0,0) -- (15.6,3.9) -- cycle    ;

\draw [line width=1.5]    (55,300) -- (55,24) ;
\draw [shift={(55,20)}, rotate = 90] [fill={rgb, 255:red, 0; green, 0; blue, 0 }  ][line width=0.08]  [draw opacity=0] (15.6,-3.9) -- (0,0) -- (15.6,3.9) -- cycle    ;


\footnotesize\draw (300,210) node [anchor=north west][inner sep=0.75pt]   [align=left] { Counting Large Stars\\ \ \ \ \ \ \ \ \ Succeeds};
\footnotesize\draw (300,90) node [anchor=north west][inner sep=0.75pt]   [align=left] { Both Counting Edges \\ \ \ \ and Large Stars\\ \ \ \ \ \ \ \ \ \ Succeed};
\footnotesize\draw (120,100) node [anchor=north west][inner sep=0.75pt]   [align=left] {Counting Edges\\ \ \ \ \ \  \ Succeeds};
\footnotesize\draw (120,225) node [anchor=north west][inner sep=0.75pt]   [align=left] {CD Impossible};
\draw (250,305) node [anchor=north west][inner sep=0.75pt]    {\small $\frac{1+\gamma}{2}$};
\draw (280,305) node [anchor=north west][inner sep=0.75pt]    {\small $\frac{1+\gamma+o(1)}{2}$};
\draw (440,310) node [anchor=north west][inner sep=0.75pt]  [font=\small]  {$\alpha$};
\draw (38,305) node [anchor=north west][inner sep=0.75pt]  [font=\small]  {$0$};
\draw (35,43.4) node [anchor=north west][inner sep=0.75pt]  [font=\small]  {$\beta$};
\draw (25,170.4) node [anchor=north west][inner sep=0.75pt]  [font=\footnotesize]  {$1/2$};
\end{tikzpicture}
    \caption{
    Phase-transition diagram of planted bipartite clique, where $H = K_{a,b}$ with $a \geq b$. We use parameter configuration $p = 1 - n^{-\gamma}$, $a = n^\alpha$ and $b = n^{\beta}$.}
    \label{fig:BPC}
    
\end{figure}

\section{Tightness of Theorem \ref{thm:main}} \label{subsection:failure-of-counting-stars}
In this section, we prove the tightness of our main result (Theorem \ref{thm:main}). Recall, that according to Theorem \ref{thm:main}, under the assumptions $p = \Omega(1)$ and $D = O(1)$, the degree-$D$ polynomials achieve strong separation for detecting a planted subgraph in $G(n, p)$ if and only if the signed count polynomial of a $t \leq D$-star does so too. We provide counterexamples showing that if either $p = \Omega(1)$ or $D = O(1)$ is not satisfied, counting stars could fail to achieve strong separation even when some other degree-$D$ polynomial does so.

\subsection*{Failure of counting stars under vanishing $p$.}  Assume $p = n^{-\gamma}$ for a constant $\gamma \in (0,1).$ Then we show that for some appropriate constant size $k$, the planted detection model with $H$ being a $k$-clique has the following behavior.
\begin{lemma} \label{lemma:small-p}
    For any $p = n^{-\gamma}$ where $\gamma \in (0, 1)$ is a constant, there exists a constant $k$ such that, for the planted subgraph detection task per Definition \ref{dfn:planted} with $H$ being a $k$-clique, counting constant-sized stars fails to achieve strong separation, whereas some constant-degree polynomial test achieves strong separation.
\end{lemma}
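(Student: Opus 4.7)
My plan is to take $H = K_k$ for a sufficiently large constant $k$; concretely, fix any integer $k$ with $k > 1 + 2/\gamma$, for example $k = \lceil 2/\gamma \rceil + 2$. I will show separately that (i) every signed star count $f_{\mathbf{K}_{1,t}}$ with $t = O(1)$ fails to strongly separate $\mathbb{P}$ and $\mathbb{Q}$, and (ii) the signed $k$-clique count $f_{\mathbf{K}_k}$, which is a polynomial of constant degree $\binom{k}{2}$, does achieve strong separation. The underlying intuition is that at sparsity $p = n^{-\gamma}$, once $k > 1+2/\gamma$, a random $G(n,p)$ typically contains no $K_k$ at all while the planted model always contains one; higher-degree clique statistics therefore pick up the small planted clique even though low-moment star statistics are drowned out by Erd\H{o}s--R\'enyi noise.

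\textit{Failure of star counts.} Since every vertex of $H=K_k$ has degree $k-1$, only $K_{1,t}$ copies whose $t+1$ vertices all lie inside the planted vertex set $V_H$ contribute to $\mathbb{E}_{\mathbb{P}}[f_{\mathbf{K}_{1,t}}]$; there are $k\binom{k-1}{t}$ such copies, and each contributes $((1-p)/p)^{t/2}$. Combined with the Fourier-orthonormality identity $\mathbb{E}_{\mathbb{Q}}[f_{\mathbf{K}_{1,t}}^2] = n\binom{n-1}{t} = \Theta(n^{t+1})$, this gives
\[
\text{Adv}(f_{\mathbf{K}_{1,t}}) = \Theta\!\left(n^{\gamma t/2 - (t+1)/2}\right) = \Theta\!\left(n^{-(t(1-\gamma)+1)/2}\right),
\]
which tends to $0$ as $n\to\infty$ for every constant $t$, since $\gamma\in (0,1)$. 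Bounded advantage precludes strong separation (by the standard lemma cited after Definition~\ref{dfn:advantage}).

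\textit{Success of the $k$-clique count.} On the null side, $\mathbb{E}_{\mathbb{Q}}[f_{\mathbf{K}_k}]=0$ and $\mathrm{Var}_{\mathbb{Q}}(f_{\mathbf{K}_k}) = \binom{n}{k} = \Theta(n^k)$ by Fourier orthonormality. Under $\mathbb{P}$ only the unique $K_k$-copy on $V_H$ contributes to the mean, giving $\mathbb{E}_{\mathbb{P}}[f_{\mathbf{K}_k}] = ((1-p)/p)^{\binom{k}{2}/2} = \Theta(n^{\gamma k(k-1)/4})$. Since $k > 1+2/\gamma$ implies $\gamma k(k-1)/2 > k$, the ratio $\sqrt{\mathrm{Var}_{\mathbb{Q}}}/|\mathbb{E}_{\mathbb{P}}-\mathbb{E}_{\mathbb{Q}}| = \Theta(n^{k/2 - \gamma k(k-1)/4})$ is $o(1)$ immediately. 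For $\mathrm{Var}_{\mathbb{P}}$ I will use the following key combinatorial cancellation: for any two distinct $K_k$-copies $S\ne S'$ in $K_n$, $|V(S)\cup V(S')| \geq k+1 > |V_H|$, so at least one edge of $S\triangle S'$ has an endpoint outside $V_H$ and hence lies outside $E(H)$; such an edge is $\mathrm{Bernoulli}(p)$ under $\mathbb{P}$ with mean $p$, forcing $\mathbb{E}_{\mathbb{P}}[\chi_S\chi_{S'}]=0$. Consequently
\[
\mathbb{E}_{\mathbb{P}}[f_{\mathbf{K}_k}^2] = \sum_{S}\mathbb{E}_{\mathbb{P}}[\chi_S^2] = \sum_{j=0}^{k}\binom{k}{j}\binom{n-k}{k-j}\left(\frac{1-p}{p}\right)^{\binom{j}{2}}
\]
indexed by $j=|V(S)\cap V_H|$. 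The $j=k$ term matches $(\mathbb{E}_{\mathbb{P}}[f_{\mathbf{K}_k}])^2$ and cancels in the variance, and because the log-exponent $(k-j) + \gamma\binom{j}{2}$ is convex in $j$ the remaining sum is dominated by the endpoints $j=0$ and $j=k-1$. A short calculation then shows that $\sqrt{\mathrm{Var}_{\mathbb{P}}}/|\mathbb{E}_{\mathbb{P}}-\mathbb{E}_{\mathbb{Q}}|$ is at most $\max\!\left(n^{k/2-\gamma k(k-1)/4},\, n^{1/2-\gamma(k-1)/2}\right)$, both of which are $o(1)$ for $k > 1+2/\gamma$. Strong separation follows.

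\textit{Main obstacle.} The most substantive step is the orthogonality observation $\mathbb{E}_{\mathbb{P}}[\chi_S\chi_{S'}]=0$ for distinct $K_k$-copies $S\ne S'$, which hinges on the sharp combinatorial fact that two different $k$-vertex sets cannot both be contained in the same $k$-vertex planted set. This cancellation collapses the doubly-indexed second moment into the single-indexed diagonal sum above; without it, the $\mathbb{P}$-variance analysis would require tracking the full joint law of overlapping clique copies and would be considerably more delicate.
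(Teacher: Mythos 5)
Your proof is correct and takes a genuinely different route from the paper's for the positive direction. The paper works with the \emph{unsigned} count $f(G) = \sum_{|U|=k}\mathbf{1}\{G[U]\text{ is a clique}\}$, exploits the free lower bound $\mathbb{E}_{\mathbb{P}}[f]\ge 1$, and then carries out a fairly involved multi-parameter case analysis (indexed by $|U\cap W|$, $|U'\cap W|$, $|U\cap U'\setminus W|$, $|U\cap U'\cap W|$) to establish $\mathbb{E}_{\mathbb{P}}[f^2]\le 1+o(1)$. You instead use the \emph{signed} Walsh--Fourier clique count $f_{\mathbf{K}_k}$, and the $\mathbb{P}$-variance calculation collapses via your orthogonality observation: two distinct $k$-cliques $S\ne S'$ jointly span at least $k+1$ vertices while $|V_H|=k$, so for every realization of $\bold{H}$ some edge of $S\triangle S'$ lies outside $E(\bold{H})$, is centered Bernoulli$(p)$, and annihilates the conditional expectation, giving $\mathbb{E}_{\mathbb{P}}[\chi_S\chi_{S'}]=0$ exactly. (A slightly more careful version of your one-line justification is warranted: pick $u\in V(S)\setminus V(S')$; all $k-1$ edges of $S$ at $u$ are in $S\setminus S'$, and for them all to lie in $\binom{V_H}{2}$ one would need $V(S)=V_H$, and symmetrically $V(S')=V_H$, contradicting $S\ne S'$.) This reduces $\mathbb{E}_{\mathbb{P}}[f_{\mathbf{K}_k}^2]$ to a single sum over the overlap $j=|V(S)\cap V_H|$, whose $j=k$ term matches $\mathbb{E}_{\mathbb{P}}[f_{\mathbf{K}_k}]^2$ exactly, and convexity of the exponent handles the remaining $O(1)$ terms. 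Your argument is cleaner than the paper's; the trade-off is that the cancellation is tailored to $H$ being a $k$-clique on exactly $k$ vertices, whereas the paper's unsigned-count technique is more robust. Your star-failure computation matches the paper's (you get the sharper $o(1)$ advantage rather than $O(1)$), and your choice $k>1+2/\gamma$ is essentially the same threshold the paper uses implicitly via $p=o(n^{-2/(k-1)})$.
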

\subsection*{Failure of counting stars under non-constant degree $D$.}
Here, we fix $p = \frac{1}{2}$ and $k = C\sqrt{n}$, where $C > 0$ is a large enough constant. By focusing on the performance of $D=O(\log n)$-degree polynomials for the planted subgraph detection task where $H$ is a $k$-clique the following holds.
\begin{lemma} \label{lemma:large-D}
    Let $H$ be a clique of size $C\sqrt{n}$ and $p = 1/2$, where $C>0$ is any constant. Consider the planted subgraph detection task where $H$ is planted in $G(n,p)$. Then, for any $t=t_n \geq 1$ (potentially growing with $n$), counting $t$-stars fails to achieve strong separation. Yet, for large enough $C>0,$ an $O(\log n)$-degree polynomial achieves strong separation.
\end{lemma}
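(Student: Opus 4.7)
I will compute $\text{Adv}(f_{\mathbf{K}_{1,t}})$ explicitly and show it is bounded by a constant independent of $n$ and $t$. By orthonormality of the Walsh--Fourier basis under $\QQ = G(n, 1/2)$, $\Var_{\QQ}(f_{\mathbf{K}_{1,t}}) = \E_{\QQ}[f_{\mathbf{K}_{1,t}}^2] = n \binom{n-1}{t}$, which is the number of labelled $t$-stars in $K_n$. For the planted $k$-clique with $p = 1/2$, the normalizer $(1-p)/\sqrt{p(1-p)} = 1$, and since $H$ is a clique we have $\E_{\PP}[\chi_S] = \Pr[V(S) \subseteq V_H]$ for every edge set $S$, giving $\E_{\PP}[f_{\mathbf{K}_{1,t}}] = k \binom{k-1}{t}$ (the expected number of $t$-stars inside the planted clique). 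Using $\binom{k-1}{t}/\binom{n-1}{t} \leq (k/n)^t$ (valid since each factor of the falling-factorial ratio is at most $k/n$) and $\binom{k-1}{t} \leq k^t/t!$, I obtain
\[
\text{Adv}(f_{\mathbf{K}_{1,t}})^2 = \frac{k^2 \binom{k-1}{t}^2}{n \binom{n-1}{t}} \leq \frac{k^2}{n}\binom{k-1}{t}\left(\frac{k}{n}\right)^t \leq \frac{(k^2/n)^{t+1}}{t!} = \frac{C^{2(t+1)}}{t!}.
\]
Summing this over $t \geq 0$ gives $C^2 e^{C^2}$, so $\text{Adv}(f_{\mathbf{K}_{1,t}}) \leq C e^{C^2/2}$ uniformly in $n$ and $t \geq 1$. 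Since $\E_{\QQ}[f_{\mathbf{K}_{1,t}}] = 0$ and therefore $\sqrt{\Var_{\QQ}(f_{\mathbf{K}_{1,t}})} = \E_{\PP}[f_{\mathbf{K}_{1,t}}]/\text{Adv}(f_{\mathbf{K}_{1,t}})$, a bounded advantage rules out strong separation for every choice of $t = t_n \geq 1$.

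\textbf{Plan for the second part (an $O(\log n)$-degree polynomial succeeds).} My plan is to lift the classical spectral algorithm for planted clique into a polynomial of degree $r = \Theta(\log n)$ via a trace power. Let $\bar G$ denote the centered $\pm 1$ adjacency matrix ($\bar G_{ij} = 2 G_{ij} - 1$ for $i \neq j$, $\bar G_{ii} = 0$) and set
\[
f_r(G) := \tr(\bar G^r) - \E_{\QQ}[\tr(\bar G^r)], \qquad r = \lceil c \log n \rceil,
\]
for a sufficiently large constant $c$; this is a degree-$r = O(\log n)$ polynomial in $G$. Under $\QQ$, $\bar G$ is a Wigner matrix with $\pm 1$ entries, so by the semicircle law $\|\bar G\|_{\mathrm{op}} \leq (2 + o(1))\sqrt n$ whp, hence $0 \leq \tr(\bar G^r) \leq n(2\sqrt n)^r$ for even $r$, and standard fluctuation bounds for linear spectral statistics give $\sqrt{\Var_{\QQ}(\tr(\bar G^r))} = O(n^{r/2})$ up to polylog factors in $r$. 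Under $\PP$, conditional on $V_H$ I write $\bar G = W + M$ where $W$ is Wigner-like on edges outside $E(H)$ and $M = u u^\top - \mathrm{diag}(u)$ with $u = \mathbf{1}_{V_H}$ is a rank-one-plus-diagonal perturbation of operator norm $k - 1 = C\sqrt n - 1$. For $C > 2$, the BBP transition implies $\lambda_1(\bar G) \geq (1 - o(1)) k$ whp, so for even $r$, $\tr(\bar G^r) \geq \lambda_1^r \geq (1-o(1))(C\sqrt n)^r$. Thus $\E_{\PP}[f_r] \geq (C\sqrt n)^r - O(2^r n^{1+r/2})$, which dominates $\sqrt{\Var(f_r)} = O(n^{r/2})$ by a factor $(C/C_0)^r$ for some absolute constant $C_0$, provided $C$ is large and $r = \lceil c \log n\rceil$ is chosen so that $(C/C_0)^r \gg n$.

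\textbf{Expected main obstacle.} The most delicate ingredient is a tight control of $\Var_{\PP}(\tr(\bar G^r))$ when $r$ grows with $n$. My plan is to exploit the symmetry of the planted model under the action of $S_n$ on size-$k$ subsets $V_H$: since the conditional law of $\tr(\bar G^r)$ given $V_H$ does not depend on $V_H$, we have $\Var_{\PP}(\tr(\bar G^r)) = \Var_{\PP}(\tr(\bar G^r) \mid V_H)$, so it suffices to bound the variance of a trace of a fixed rank-one spike plus i.i.d.\ $\pm 1$ Wigner noise. This reduces to a graphical/moment-method expansion in the spirit of the pure Wigner computation, with the spike contribution (which is largely deterministic) separated out. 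Once this variance bound is in place, the expectation--variance ratio computed above yields strong separation for large enough $C$.
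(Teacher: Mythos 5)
Your first part is correct and is essentially the same argument the paper gives. You compute $\text{Adv}(f_{\mathbf{K}_{1,t}})^2 = k^2\binom{k-1}{t}^2/(n\binom{n-1}{t})$ and bound it by $C^{2(t+1)}/t!$; the paper's bound $\tfrac{k^2}{n}\bigl(\tfrac{ek^2}{tn}\bigr)^t$ is the same thing up to routine estimates. One small plus of your version: the falling-factorial ratio bound $\binom{k-1}{t}/\binom{n-1}{t}\le (k/n)^t$ is exact for all $1\le t\le k-1$, and for $t\ge k$ the polynomial is identically zero, so you cleanly handle the ``$t$ potentially growing'' clause without needing an asymptotic approximation of $M_{\mathbf{S}}$ by $n^{t+1}$. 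So the first half is fine.

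The second part is where there is a genuine gap. Your plan uses the raw trace $\tr(\bar G^r)$, whereas the paper sums $M^{\bar i}$ only over \emph{simple} closed paths (all $l$ vertices distinct). This choice is not cosmetic: it is the key device that makes the second-moment computation under $\PP$ tractable. For the simple-closed-path polynomial, $\E_{\QQ}[f]=0$ exactly (no centering), two distinct simple cycles are $\QQ$-uncorrelated so $\E_{\QQ}[f^2]=2l\,n_{(l)}$ on the nose, and the $\E_{\PP}[f^2]$ analysis reduces to classifying overlaps of two vertex-simple cycles by a bounded set of parameters $(a,b,s,c)$ — which is what the paper then carries out. Your plan replaces this polynomial with $\tr(\bar G^r)-\E_{\QQ}\tr(\bar G^r)$, and then the crucial step — bounding $\Var_{\PP}(\tr(\bar G^r))$ for $r=\Theta(\log n)$ — is exactly what you flag as the ``expected main obstacle'' and do not carry out. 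With non-simple walks, one must control the covariance over all pairs of closed walks with arbitrary vertex and edge repetitions, with the spike mixed in; this is substantially harder combinatorics than the paper's, and ``a graphical/moment-method expansion in the spirit of the pure Wigner computation'' does not resolve it without real work. In short, you have correctly identified the missing lemma, but you have not supplied it, and the paper's proof hinges on a different choice of polynomial precisely to avoid that difficulty.

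Two further small inaccuracies in your sketch, neither fatal but worth noting. First, $\sqrt{\Var_{\QQ}(\tr(\bar G^r))}=O(n^{r/2})$ is not right as $r$ grows: the standard moment computation gives something like $\mathrm{poly}(r)\cdot 2^r\cdot n^{r/2}$, which still loses to $k^r=C^r n^{r/2}$ for $C>2$, so the conclusion survives, but the stated bound is off by an exponential factor in $r$. Second, you do not need BBP or a whp statement to lower-bound $\E_{\PP}[\tr(\bar G^r)]$: since the planted clique entries are deterministically $+1$, the Rayleigh quotient with $v=\mathbf 1_{V_H}/\sqrt{k}$ gives $\lambda_1(\bar G)\ge k-1$ \emph{surely}, hence $\tr(\bar G^r)\ge (k-1)^r=(1-o(1))k^r$ surely for even $r=O(\log n)$; invoking BBP and then trying to control $(1-o(1))^r$ when $r$ grows is unnecessary and, as written, has the usual ``$o(1)$ to the $r$-th power'' pitfall you would need to patch.
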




\section{Getting Started for Proof Sections}

We now move to the proof sections of our results. We start by introducing some necessary definitions and notations.

Let $n,k \in \mathbb{N}$ be natural numbers. We will denote $[n] := \{1, 2, \dots, n\}$. We will use $\binom{n}{k}$ to denote the number of ways to choose $k$ elements from $n$ elements, and $n_{(k)} := n(n-1)\dots (n-k+1)$ to denote the $k$-th falling factorial of $n$. By convention, the $0$-th falling factorial of any number is equal to $1$ and $n_{(k)} = 0$ for any $k > n$. Let $X$ be set. We will denote $\binom{X}{k} := \{A \subseteq X: |A| = k\}$.

\subsection*{Asymptotic notations} We will use standard asymptotic notations $O, o, \Omega, \omega, \Theta$. Let $(a_n)_{n \in \mathbb{N}}$, $(b_n)_{n \in \mathbb{N}}$ be two sequences of real numbers. We sometimes write $a_n \lesssim b_n$ when $a_n = O(b_n)$, $a_n \ll b_n$ when $a_n = o(n)$, $a_n \gtrsim b_n$ when $a_n = \Omega(b_n)$, and $a_n \gg b_n$ when $a_n = \omega(b_n)$.

\subsection*{Graph theory basics}
A graph is a pair of $(V, E)$ where $V$ is the vertex set and $E \subseteq \binom{V}{2}$ is the edge set. Sometimes we will identify the vertex set $V$ with $[n]$. $K_n$ will denote the complete graph on $n$ vertices, and $K_{s,t}$ will be the complete bipartite graph with one part having $s$ vertices and the other having $t$ vertices.

We say $H$ is a subgraph of $G$, denoted as $H \subseteq G$, if $V(H) \subseteq V(G)$ and $E(H) \subseteq E(G)$. We say $H$ is a \emph{spanning} subgraph of $G$, if it is a subgraph of $G$ and $V(H) = V(G)$. We say a graph $G$ is \emph{edge-induced} if there are no isolated vertices in $G$.

A homomorphism from graph $G_1$ to graph $G_2$ is a mapping $f: V(G_1) \to V(G_2)$ that preserves adjacency relation, i.e., $\{f(u), f(v)\} \in E(G_2)$ whenever $\{u,v\} \in E(G_1)$. An isomorphism is a homomorphism that is bijective, and we use $G_1 \cong G_2$ to denote that $G_1$ and $G_2$ are isomorphic. An automorphism of a graph $G$ is an isomorphism from $G$ to itself. The set of automorphisms of a graph $G$ forms a group under composition called the automorphism group of $G$, denoted as $\text{Aut}(G)$.

\subsection*{Subgraph copies: notation}
We will use throughout a notion of shape and labelled/unlabelled copies of a shape.
\begin{definition}
    A shape $\mathbf{S}$ is an edge-induced graph. By a slight abuse of notation, we sometimes use $\mathbf{S}$ to refer to the edge set $E(\mathbf{S})$ as an edge-induced subgraph is determined by its edge set.

    Let $\mathbf{S}$ be a shape, and $G$ be a graph. An unlabelled copy of $\mathbf{S}$ in $G$ is a subgraph $S\subseteq G$ such that $\mathbf{S}$ is isomorphic to $S$. A labelled copy of $\mathbf{S}$ in $G$ is a pair $(S, \gamma)$ of a subgraph $S \subseteq G$ together with a labelling $\gamma: V(S) \to V(\mathbf{S})$, such that $\gamma$ is an isomorphism from $S$ to $\mathbf{S}$.

    Unless stated otherwise, in this paper we will use copies to refer to labelled copies.
\end{definition}

Following the notation \cite{mossel2022second} we define for a shape $\mathbf{S}$ and a graph $G,$
\begin{align}\label{dfn:copies}
    M_{\mathbf{S}, G} := \# \{\text{copies of } \mathbf{S} \text{ inside } G \},
\end{align} and when $n$ is clear from context, $M_{\mathbf{S}} := M_{\mathbf{S}, K_n} = n_{(|V(\mathbf{S})|)}$.

\begin{example}
    In the complete graph $K_n$, the number of (labelled) copies of a triangle $K_3$ is $n_{(3)} = n(n-1)(n-2)$, whereas the number of triangles is $\binom{n}{3} = \frac{n(n-1)(n-2)}{6}$. 
\end{example}


\begin{definition}[``Isomorphisic pairs of copies''] \label{def:isomorphism-pair}
    Let $\mathbf{S}_1$ and $\mathbf{S}_2$ be two shapes, and $G$ be a graph. Let $((S_1, \gamma_1), (S_2, \gamma_2))$ and $((\hat{S}_1, \hat{\gamma}_1), (\hat{S}_2, \hat{\gamma}_2))$ be two pairs of copies of $\mathbf{S}_1$ and copies of $\mathbf{S}_2$ in $G$. We say that $((S_1, \gamma_1), (S_2, \gamma_2))$ and $((\hat{S}_1, \hat{\gamma}_1), (\hat{S}_2, \hat{\gamma}_2))$ are isomorphic if
    \begin{itemize}
        \item For every $u \in V(S_1) \cap V(S_2)$, $\hat{\gamma}_1^{-1} (\gamma_1 (u)) = \hat{\gamma}_2^{-1} (\gamma_2 (u))$.
        \item For every $v \in V(\hat{S}_1) \cap V(\hat{S}_2)$, $\gamma_1^{-1} (\hat{\gamma}_1 (u)) = \gamma_2^{-1} (\hat{\gamma}_2 (u))$.
    \end{itemize}
\end{definition}

\begin{example}
    To illusrate Definition \ref{def:isomorphism-pair}, consider the two shapes $\mathbf{S}_1$ and $\mathbf{S}_2$, and the $4$ pairs of copies of them in Figure \ref{fig:isomorphism-pair}. $(1)$ and $(2)$ are isomorphic, as the pairs of labellings on all the vertices in the intersection are the same. $(1)$ and $(3)$ are not isomorphic, as in $(1)$ a vertex in the intersection has a pair of labels $(b,g)$ but in $(3)$ a vertex in the intersection has a pair of labels $(b,f)$. $(1)$ and $(4)$ are not isomorphic, as in $(1)$ the two vertex sets intersect whereas in $(4)$ the two vertex sets are disjoint.
\end{example}

\begin{figure}[h]
    \centering

\tikzset{every picture/.style={line width=0.75pt}} 

\begin{tikzpicture}[x=0.75pt,y=0.75pt,yscale=-1,xscale=1]

\draw    (311.75,50.75) -- (391.25,50.25) ;
\draw    (311.75,50.75) -- (350.75,111.25) ;
\draw    (350.75,111.25) -- (391.25,50.25) ;
\draw    (181.5,51) -- (181.25,110.25) ;
\draw    (181.5,51) -- (241.75,50.75) ;
\draw    (241.75,50.75) -- (241.75,110.25) ;
\draw    (30.5,210) -- (30.25,250.75) ;
\draw    (30.5,210) -- (69.75,209.75) ;
\draw    (69.75,209.75) -- (70.25,249.25) ;
\draw    (69.75,209.75) -- (106.25,227.25) ;
\draw    (70.25,249.25) -- (106.25,227.25) ;
\draw    (180.51,200.5) -- (223.68,200.08) ;
\draw    (180.51,200.5) -- (180.37,237.12) ;
\draw    (180.37,237.12) -- (222.21,237.41) ;
\draw    (180.37,237.12) -- (199.01,271.09) ;
\draw    (222.21,237.41) -- (199.01,271.09) ;
\draw    (531.43,249.98) -- (531.59,207.2) ;
\draw    (531.43,249.98) -- (571.98,229.3) ;
\draw    (571.98,229.3) -- (531.59,207.2) ;
\draw    (461.48,209.97) -- (461.31,249.44) ;
\draw    (461.48,209.97) -- (501.54,209.8) ;
\draw    (501.54,209.8) -- (501.54,249.44) ;
\draw    (311,210) -- (310.75,250.75) ;
\draw    (311,210) -- (350.25,209.75) ;
\draw    (350.25,209.75) -- (350.75,249.25) ;
\draw    (350.25,209.75) -- (386.75,227.25) ;
\draw    (350.75,249.25) -- (386.75,227.25) ;

\draw (169,99.9) node [anchor=north west][inner sep=0.75pt]  [color={rgb, 255:red, 74; green, 144; blue, 226 }  ,opacity=1 ]  {$a$};
\draw (169.5,35.4) node [anchor=north west][inner sep=0.75pt]  [color={rgb, 255:red, 74; green, 144; blue, 226 }  ,opacity=1 ]  {$b$};
\draw (244.5,32.9) node [anchor=north west][inner sep=0.75pt]  [color={rgb, 255:red, 74; green, 144; blue, 226 }  ,opacity=1 ]  {$c$};
\draw (355.5,104.4) node [anchor=north west][inner sep=0.75pt]  [color={rgb, 255:red, 208; green, 2; blue, 27 }  ,opacity=1 ]  {$e$};
\draw (244,103.4) node [anchor=north west][inner sep=0.75pt]  [color={rgb, 255:red, 74; green, 144; blue, 226 }  ,opacity=1 ]  {$d$};
\draw (301.5,31.9) node [anchor=north west][inner sep=0.75pt]  [color={rgb, 255:red, 208; green, 2; blue, 27 }  ,opacity=1 ]  {$f$};
\draw (394.5,30.9) node [anchor=north west][inner sep=0.75pt]  [color={rgb, 255:red, 208; green, 2; blue, 27 }  ,opacity=1 ]  {$g$};
\draw (202,122.4) node [anchor=north west][inner sep=0.75pt]    {$\mathbf{S}_{1}$};
\draw (341,121.9) node [anchor=north west][inner sep=0.75pt]    {$\mathbf{S}_{2}$};
\draw (59,245.9) node [anchor=north west][inner sep=0.75pt]  [color={rgb, 255:red, 74; green, 144; blue, 226 }  ,opacity=1 ]  {$a ,\textcolor[rgb]{0.82,0.01,0.11}{f}$};
\draw (60.5,189.9) node [anchor=north west][inner sep=0.75pt]  [color={rgb, 255:red, 74; green, 144; blue, 226 }  ,opacity=1 ]  {$b ,\textcolor[rgb]{0.82,0.01,0.11}{g}$};
\draw (20.5,191.4) node [anchor=north west][inner sep=0.75pt]  [color={rgb, 255:red, 74; green, 144; blue, 226 }  ,opacity=1 ]  {$c$};
\draw (18.5,246.9) node [anchor=north west][inner sep=0.75pt]  [color={rgb, 255:red, 74; green, 144; blue, 226 }  ,opacity=1 ]  {$d$};
\draw (111.5,214.9) node [anchor=north west][inner sep=0.75pt]  [color={rgb, 255:red, 208; green, 2; blue, 27 }  ,opacity=1 ]  {$e$};
\draw (150,231.9) node [anchor=north west][inner sep=0.75pt]  [color={rgb, 255:red, 74; green, 144; blue, 226 }  ,opacity=1 ]  {$b ,\textcolor[rgb]{0.82,0.01,0.11}{g}$};
\draw (226,229.9) node [anchor=north west][inner sep=0.75pt]  [color={rgb, 255:red, 74; green, 144; blue, 226 }  ,opacity=1 ]  {$a ,\textcolor[rgb]{0.82,0.01,0.11}{f}$};
\draw (192.5,269.9) node [anchor=north west][inner sep=0.75pt]  [color={rgb, 255:red, 208; green, 2; blue, 27 }  ,opacity=1 ]  {$e$};
\draw (167.5,184.9) node [anchor=north west][inner sep=0.75pt]  [color={rgb, 255:red, 74; green, 144; blue, 226 }  ,opacity=1 ]  {$c$};
\draw (227,184.4) node [anchor=north west][inner sep=0.75pt]  [color={rgb, 255:red, 74; green, 144; blue, 226 }  ,opacity=1 ]  {$d$};
\draw (491.15,245.01) node [anchor=north west][inner sep=0.75pt]  [color={rgb, 255:red, 74; green, 144; blue, 226 }  ,opacity=1 ]  {$a$};
\draw (489.49,191.04) node [anchor=north west][inner sep=0.75pt]  [color={rgb, 255:red, 74; green, 144; blue, 226 }  ,opacity=1 ]  {$b$};
\draw (448.36,191.88) node [anchor=north west][inner sep=0.75pt]  [color={rgb, 255:red, 74; green, 144; blue, 226 }  ,opacity=1 ]  {$c$};
\draw (446.52,245.34) node [anchor=north west][inner sep=0.75pt]  [color={rgb, 255:red, 74; green, 144; blue, 226 }  ,opacity=1 ]  {$d$};
\draw (574,210.9) node [anchor=north west][inner sep=0.75pt]  [color={rgb, 255:red, 208; green, 2; blue, 27 }  ,opacity=1 ]  {$e$};
\draw (534.5,247.4) node [anchor=north west][inner sep=0.75pt]  [color={rgb, 255:red, 208; green, 2; blue, 27 }  ,opacity=1 ]  {$f$};
\draw (537.5,188.4) node [anchor=north west][inner sep=0.75pt]  [color={rgb, 255:red, 208; green, 2; blue, 27 }  ,opacity=1 ]  {$g$};
\draw (339.5,246.4) node [anchor=north west][inner sep=0.75pt]  [color={rgb, 255:red, 74; green, 144; blue, 226 }  ,opacity=1 ]  {$a ,\textcolor[rgb]{0.82,0.01,0.11}{g}$};
\draw (341,189.9) node [anchor=north west][inner sep=0.75pt]  [color={rgb, 255:red, 74; green, 144; blue, 226 }  ,opacity=1 ]  {$b ,\textcolor[rgb]{0.82,0.01,0.11}{f}$};
\draw (301,191.4) node [anchor=north west][inner sep=0.75pt]  [color={rgb, 255:red, 74; green, 144; blue, 226 }  ,opacity=1 ]  {$c$};
\draw (299,246.9) node [anchor=north west][inner sep=0.75pt]  [color={rgb, 255:red, 74; green, 144; blue, 226 }  ,opacity=1 ]  {$d$};
\draw (392,214.9) node [anchor=north west][inner sep=0.75pt]  [color={rgb, 255:red, 208; green, 2; blue, 27 }  ,opacity=1 ]  {$e$};
\draw (40.5,299.9) node [anchor=north west][inner sep=0.75pt]    {$(1)$};
\draw (189.5,298.9) node [anchor=north west][inner sep=0.75pt]    {$(2)$};
\draw (329.5,299.4) node [anchor=north west][inner sep=0.75pt]    {$(3)$};
\draw (499,298.9) node [anchor=north west][inner sep=0.75pt]    {$(4)$};

\end{tikzpicture}
    \caption{An example of $4$ pairs of copies of $\mathbf{S}_1$ and $\mathbf{S}_2$ numbered by $(1), (2), (3), (4)$, where the labellings of copies of $\mathbf{S}_1$ are marked with blue letters, and the labellings of copies of $\mathbf{S}_2$ are marked with red letters.}
    \label{fig:isomorphism-pair}
\end{figure}
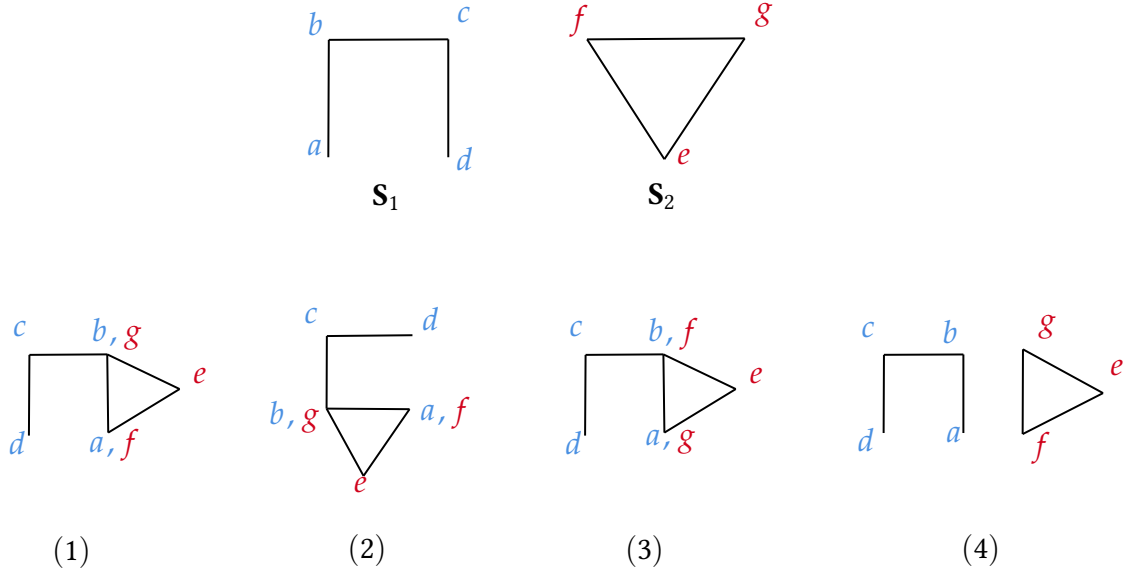

\begin{remark}
    It is not hard to see that the isomorphism of pairs of copies in Definition \ref{def:isomorphism-pair} is an equivalence relation. Therefore, the collection of pairs of copies $((S_1, \gamma_1), (S_2, \gamma_2))$ of $\mathbf{S}_1$ and $\mathbf{S}_2$ in any graph $G$ can be partitioned into isomorphism classes, i.e., equivalence classes defined by the isomorphism relation.
\end{remark}

\begin{definition}[``Intersecting pattern''] \label{def:intersecting-pattern}
    Let $\mathbf{S}_1$ and $\mathbf{S}_2$ be two shapes. An intersecting pattern is an isomorphism class of the pairs of copies of $\mathbf{S}_1$ and $\mathbf{S}_2$.

    The set of all intersecting patterns of $\mathbf{S}_1$ and $\mathbf{S}_2$ is denoted as $\text{Inter}(\mathbf{S}_1, \mathbf{S}_2)$, and an element in it is denoted as $i(S_1, S_2)$. We use non-bold $i(S_1, S_2)$ in order to distinguish a specific intersecting pattern from the given shapes $\mathbf{S}_1$ and $\mathbf{S}_2$.
\end{definition}

\begin{remark}
    Note that an intersecting pattern is all about the way the vertex sets of two labelled copies of two shapes intersect. Let $\mathbf{S}_1$ and $\mathbf{S}_2$ be shapes, with $s_1 = |V(\mathbf{S}_1)|$ and $s_2 = |V(\mathbf{S}_2)|$. The number of intersecting patterns of pairs of copies of $\mathbf{S}_1$ and $\mathbf{S}_2$ is $\sum_{k} \binom{s_1}{k} \binom{s_2}{k} k!$, which counts the total number of ways to choose a subset of $V(\mathbf{S}_1)$ of size $k$, a subset of $V(\mathbf{S}_2)$ of size $k$, and a bijective function (that specifies how the vertices are "glued" in the intersection) between these two subsets, for all values of $k$. Importantly, if $\mathbf{S}$ is a shape with a constant number of edges (thus a constant number of vertices), then the number of intersecting patterns of pairs of copies of $\mathbf{S}$ with $\mathbf{S}$ is also bounded by a constant.
\end{remark}

\begin{definition}\label{dfn:sym-diff-shape}
    Let $\mathbf{S}_1$ and $\mathbf{S}_2$ be shapes, and $i(S_1, S_2)$ be an intersecting pattern of pairs of copies of $\mathbf{S}_1$ and $\mathbf{S}_2$. For $i(S_1, S_2)$, we define the symmetric difference shape $S_1 \triangle S_2$ as the shape obtained by first taking the symmetric difference of the edge sets of a pair of copies with the intersecting pattern $i(S_1, S_2)$ and deleting isolated vertices after the symmetric difference operation. Similarly, we define the union shape $S_1 \cup S_2$ as the shape obtained by taking the union of the edge sets of a pair of copies with the intersecting pattern $i(S_1, S_2)$.
\end{definition}

\begin{remark}
    Let $\mathbf{S}_1$ and $\mathbf{S}_2$ be shapes, $i(S_1, S_2)$ be an intersecting pattern, and $G$ be a graph. The number of pairs of copies of $\mathbf{S}_1$ and $\mathbf{S}_2$ with the intersecting pattern $i(S_1, S_2)$ in $G$ is equal to $M_{S_1 \cup S_2, G}$.
\end{remark}

\section{Proof of Main Theorem }\label{sec:proof}

\subsection{Proof Strategy} \label{subsection:proof-strategy}

We briefly describe our proof strategy for Theorem \ref{thm:main}. The first bullet point is a standard result in the literature of low-degree polynomials, c.f.~\cite[Lemma 7.3]{coja2022statistical}, and our focus will be on the second bullet point of Theorem \ref{thm:main}.

The first key step of the proof is proving that if the low-degree advantage $\text{Adv}^{\le D}$ explodes, then the advantage of the count of a $t$-star $\text{Adv}^{\le D}(f_{\mathbf{K}_{1,t}})$ explodes for some $t\leq D$. It is easy to see that if $\text{Adv}^{\le D}=\omega(1)$ for some $D=O(1)$ then the count of some subgraph $\mathbf{S}$ with at most $D$ edges satisfies $\text{Adv}^{\le D}(f_{\mathbf{S}})=\omega(1)$. This follows by expanding the low-degree advantage (Proposition \ref{prop:Adv}) to get $(\text{Adv}^{\le D})^2 \approx \sum_{\mathbf{S} \leq D} (\text{Adv}^{\le D}(f_{\mathbf{S}}) )^2$ and using that there are finitely many unlabelled subgraphs with at most $D$ edges. Now by using a moment calculation and the subgraph copies notation \eqref{dfn:copies} one can conclude for all $\mathbf{S}$, $$\text{Adv}^{\le D}(f_{\mathbf{S}}) \approx \frac{M_{\mathbf{S}, H}^2}{M_{\mathbf{S}}} \left(\frac{1-p}{p}\right)^{|\mathbf{S}|}.$$ Using now a careful recursive combinatorial argument we then show in Proposition \ref{prop:star-shape} that as long as for some $\mathbf{S},$ $\frac{M_{\mathbf{S}, H}^2}{M_{\mathbf{S}}} \left(\frac{1-p}{p}\right)^{|\mathbf{S}|}=\omega(1)$ then for some star shape $\mathbf{S} = \mathbf{K}_{1,t}$ it also holds
 $\frac{M_{\mathbf{S}, H}^2}{M_{\mathbf{S}}} \left(\frac{1-p}{p}\right)^{|\mathbf{S}|} =\omega(1),$ i.e., $\text{Adv}^{\le D}(f_{\mathbf{K}_{1,t}})=\omega(1).$ This is the key step that the importance of star counts arises. 

The second now step is to prove that the star subgraph with exploding advantage achieves strong separation. The exploding advantage already implies by definition part of the strong separation result, that is
\[\sqrt{\Var_{\mathbb{Q}}[f_{\mathbf{K}_{1,t}}]} = o\left(|\mathbb{E}_{\mathbb{P}}[f_{\mathbf{K}_{1,t}}] - \mathbb{E}_{\mathbb{Q}}[f_{\mathbf{K}_{1,t}}]|\right).\]
To show the other side of the strong separation,
\[\sqrt{\Var_{\mathbb{P}}[f_{\mathbf{K}_{1,t}}]} = o\left(|\mathbb{E}_{\mathbb{P}}[f_{\mathbf{K}_{1,t}}] - \mathbb{E}_{\mathbb{Q}}[f_{\mathbf{K}_{1,t}}]|\right).\]
we need to show equivalently 
\begin{align*}
        \frac{\mathbb{E}_{\mathbb{P}}[f_{\mathbf{K}_{1,t}}^2]}{\mathbb{E}_{\mathbb{P}}[f_{\mathbf{K}_{1,t}}^2]} \le 1 + o(1),
    \end{align*}which is a second moment method comprising the technical bulk of the proof (recall $\mathbb{P}$ is \textbf{not} a product distribution and depends on the planted arbitrarily structured $H$). Careful again calculations along with combinatorial arguments allow us to conclude that the second moment condition indeed holds for all planted $H.$  We highlight that this is a key technical part of the proof and the simplicity of the star shape appears essential. For example, one way this manifests itself is that there are only a few different cases that two star graphs can be intersecting (see e.g., Figures \ref{fig:intersecting-shape-distinct-root}, \ref{fig:vertex-partitioning-of-fig1} and \ref{fig:intersecting-shape-sharing-root} below) which greatly simplifies the second moment expansion (see Proposition \ref{prop:intersection-ratio-bound} for the key step leading to the second moment method).

\subsection{Key Lemmas}

Recall the notation $M_{\mathbf{S},G}$ in \eqref{dfn:copies}. The first key proposition proves that an exploding advantage implies that for some star shape $\mathbf{S}$ the quantity $ \frac{M_{\mathbf{S}, H }^2}{M_{\mathbf{S} }} \left(\frac{1-p}{p}\right)^{|\mathbf{S} |}$ is exploding. This quantity in simple the rescaled squared advantage of star count $f_{\mathbf{S}}$. 
\begin{proposition}\label{prop:star-shape}
    Suppose $D = O(1)$ and $p = \Omega(1)$. If $\text{Adv}^{\le D} \to \infty$, then among star graphs with at most $D$ edges \[\max_{\substack{\mathbf{S} \cong \mathbf{K}_{1,t}:\\ t \le D} } \frac{M_{\mathbf{S}, H }^2}{M_{\mathbf{S} }} \left(\frac{1-p}{p}\right)^{|\mathbf{S} |} \to \infty.\]
\end{proposition}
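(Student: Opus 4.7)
My plan is to prove the proposition in three steps, following the outline sketched in Section~\ref{subsection:proof-strategy}: first reduce from $\mathrm{Adv}^{\le D}\to\infty$ to ``some shape has exploding rescaled advantage'', then peel non-leaf edges to collapse the shape to a star forest, and finally separate the star forest into a single winning star.

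\textbf{Step 1 (Parseval expansion, dominant shape).} By the orthonormality of the Walsh--Fourier characters $\chi_S$ under $\mathbb{Q}=G(n,p)$ together with the direct computation of $\mathbb{E}_\mathbb{P}[\chi_S]$ (the content of the preceding Proposition~\ref{prop:Adv}), one has
\[
\bigl(\mathrm{Adv}^{\le D}\bigr)^2 \;=\; (1+o(1))\!\!\sum_{\mathbf{S}:\, |\mathbf{S}|\le D}\!\! \frac{1}{|\Aut(\mathbf{S})|}\, r_{\mathbf{S}}, \qquad r_{\mathbf{S}}:=\frac{M_{\mathbf{S},H}^{2}}{M_{\mathbf{S}}}\!\left(\tfrac{1-p}{p}\right)^{|\mathbf{S}|},
\]
the sum ranging over isomorphism classes of edge-induced shapes with at most $D$ edges. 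Since $D=O(1)$ this sum has $O_D(1)$ terms, so $\mathrm{Adv}^{\le D}\to\infty$ forces a fixed shape $\mathbf{S}^{\star}$ with $|\mathbf{S}^{\star}|\le D$ and $r_{\mathbf{S}^{\star}}\to\infty$.

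\textbf{Step 2 (Peel non-leaf edges to reach a star forest).} As long as the current shape contains an edge $e=\{u,w\}$ with $\deg(u),\deg(w)\ge 2$, delete $e$ to obtain the edge-induced shape $\mathbf{S}'$. Since $u,w$ remain non-isolated, $v(\mathbf{S}')=v(\mathbf{S}^{\star})$ and hence $M_{\mathbf{S}'}=M_{\mathbf{S}^{\star}}$; trivially $M_{\mathbf{S}^{\star},H}\le M_{\mathbf{S}',H}$ since every copy of $\mathbf{S}^{\star}$ is a copy of $\mathbf{S}'$. Therefore
\[
r_{\mathbf{S}^{\star}} \;\le\; \tfrac{1-p}{p}\cdot r_{\mathbf{S}'},
\]
which is a bounded constant-factor loss using $p=\Omega(1)$. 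Iterating at most $|\mathbf{S}^{\star}|\le D$ times yields a star forest $\mathbf{S}^{\mathrm{ff}}=\bigsqcup_{i=1}^{c}\mathbf{K}_{1,t_i}$ with $c\le D$, each $t_i\ge 1$, and $r_{\mathbf{S}^{\mathrm{ff}}}\to\infty$. Termination is guaranteed by the elementary observation that a shape whose every edge has a leaf endpoint is exactly a disjoint union of stars: if $v$ has $\deg(v)\ge 2$, each edge at $v$ forces its other endpoint to be a leaf, which stops the component at $\mathbf{K}_{1,\deg v}$.

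\textbf{Step 3 (Star forest to one star).} Every injective homomorphism $\mathbf{S}^{\mathrm{ff}}\hookrightarrow H$ decomposes into a vertex-disjoint tuple of star embeddings, so
\[
M_{\mathbf{S}^{\mathrm{ff}},H} \;\le\; \prod_{i=1}^{c} M_{\mathbf{K}_{1,t_i},H}.
\]
On the normalization side, $M_{\mathbf{S}^{\mathrm{ff}}} = n_{(v(\mathbf{S}^{\mathrm{ff}}))} = (1+o(1))\prod_i n_{(t_i+1)} = (1+o(1))\prod_i M_{\mathbf{K}_{1,t_i}}$, because $n\to\infty$ while the $t_i$'s remain bounded. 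Combining and absorbing the $(1-p)/p$ factors,
\[
r_{\mathbf{S}^{\mathrm{ff}}} \;\le\; (1+o(1))\,\prod_{i=1}^{c} r_{\mathbf{K}_{1,t_i}}.
\]
Since the left-hand side diverges and the product has $c\le D$ nonnegative factors, at least one factor $r_{\mathbf{K}_{1,t_i}}$ must diverge, yielding the desired star $\mathbf{K}_{1,t}$ with $1\le t\le D$.

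\textbf{Main obstacle.} The conceptually delicate piece is the graph-theoretic characterization used in Step~2 (``no edge with two interior endpoints $\Rightarrow$ star forest''); this is elementary but must be verified so that the peeling terminates at the star-forest family rather than some more general structure. All accumulated losses (the constants in Step~2 and the $1+o(1)$ in Step~3) stay bounded precisely because $p=\Omega(1)$ keeps $(1-p)/p=O(1)$ and $D=O(1)$ bounds the number of peeling steps; the tightness examples in Section~\ref{subsection:failure-of-counting-stars} indicate that if either assumption is dropped, these controls indeed fail.
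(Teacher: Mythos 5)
Your proof is correct and follows the paper's high-level strategy closely: expand $(\mathrm{Adv}^{\le D})^2$ via Parseval (Proposition~\ref{prop:Adv}), isolate a dominant shape, peel edges using the spanning-sub-shape monotonicity (Claim~\ref{claim:spanning}) with only constant loss because $p=\Omega(1)$, and finally split a star forest via the disjoint-union bound (Claim~\ref{claim:disjoint-union}). The only real difference is the peeling rule: the paper first passes to a spanning tree of a connected component and then repeatedly deletes the middle edge of a length-$3$ path (reducing diameter to $\le 2$), whereas you delete any edge whose two endpoints are both non-leaves, terminating directly at a star forest. Both rules preserve the vertex set and hence $M_{\mathbf{S}}$, and both bottom out in the same family; your variant avoids the explicit spanning-tree step and is arguably a touch more streamlined, but it is not a substantively different argument.

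One small wording point in Step~3: since the dominant shape $\mathbf{S}^\star$, and hence the star forest and its parts, may change with $n$, the conclusion ``at least one factor $r_{\mathbf{K}_{1,t_i}}$ must diverge'' is imprecise as stated. What the pigeonhole actually gives is $\max_{i} r_{\mathbf{K}_{1,t_i}} \ge \bigl(\prod_i r_{\mathbf{K}_{1,t_i}}\bigr)^{1/c} \gtrsim r_{\mathbf{S}^{\mathrm{ff}}}^{1/c} \to\infty$, and therefore $\max_{t\le D} r_{\mathbf{K}_{1,t}}\to\infty$, which is exactly the form of the proposition. This is a phrasing fix rather than a gap. (The same caveat implicitly applies to the paper's own argument; neither proof claims a single fixed star works for all $n$.)
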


The next is an important and quite technical proposition that reveals some structure between overlapping copies of the star shape $\mathbf{S}$ with exploding advantage from Proposition \ref{prop:star-shape}. This proposition is the key step in a second moment calculation used in the proof of Theorem \ref{thm:main}.

\begin{proposition} \label{prop:intersection-ratio-bound} 
    Suppose $D = O(1)$ and $p = \Omega(1)$. Let $\mathbf{S} \cong \mathbf{K}_{1,t}$ be a star shape that maximizes $\frac{M_{\mathbf{S} ,H}^2}{M_{\mathbf{S} }} \left(\frac{1-p}{p}\right)^{|\mathbf{S}|}$ among star shapes with at most $D$ edges, and suppose $\frac{M_{\mathbf{S} ,H}^2}{M_{\mathbf{S} }} \left(\frac{1-p}{p}\right)^{|\mathbf{S}|} \to \infty$. Let $i(S_1, S_2) \in \text{Inter}(\mathbf{S}, \mathbf{S})$ be an intersecting pattern such that $S_1$ and $S_2$ have non-empty intersection. Then, 
    \begin{align*}
        \frac{n^{|V(S_1 \cup S_2)| - |V(S_1 \triangle S_2)| } M_{S_1 \triangle S_2, H}\left(\frac{1-p}{p}\right)^{|S_1 \triangle S_2|/2} }{M_{\mathbf{S} ,H}^2 \left(\frac{1-p}{p}\right)^{|\mathbf{S}|}} = o(1).
    \end{align*}
\end{proposition}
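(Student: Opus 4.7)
The plan is to do a case analysis on the intersecting pattern $i(S_1,S_2)$, taking advantage of the fact that two labelled copies of the star $\mathbf{S}=K_{1,t}$ can intersect in only a bounded number of qualitatively distinct ways. These are parameterized by (i) whether the centers $c_1,c_2$ coincide, (ii) when $c_1\neq c_2$, whether either center appears as a leaf of the opposite copy, and (iii) the number $j$ of shared leaves. Direct bookkeeping in each case yields closed-form values for $|V(S_1\cup S_2)|$, $|V(S_1\triangle S_2)|$, and the isomorphism type of $S_1\triangle S_2$; the crucial structural feature is that $S_1\triangle S_2$ is always either a star, two stars glued along shared leaves, or a ``double star'' in which the two centers are adjacent, so the ``joining'' part of any such shape has at most one or two extra edges.

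Two inputs drive the estimates. The first, (a), is the assumed optimality of $\mathbf{S}$ among stars with at most $D$ edges, rearranged as
\[
M_{K_{1,s},H}\ \le\ M_{K_{1,t},H}\cdot n^{(s-t)/2}\left(\frac{p}{1-p}\right)^{(t-s)/2},\qquad 1\le s\le D.
\]
The second, (b), is the divergence hypothesis $M_{K_{1,t},H}^{2}\left(\frac{1-p}{p}\right)^{t}\gg n^{t+1}$. The identity pattern $S_1=S_2$ is handled immediately by (b), since the ratio collapses to $n^{t+1}\left(\frac{p}{1-p}\right)^{t}/M_{K_{1,t},H}^{2}$. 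For the shared-center case with $1\le j\le t-1$ one has $S_1\triangle S_2\cong K_{1,2(t-j)}$ and $|V(S_1\cup S_2)|-|V(S_1\triangle S_2)|=j$, so the ratio reduces to $n^{j}\left(\frac{p}{1-p}\right)^{j}M_{K_{1,2(t-j)},H}/M_{K_{1,t},H}^{2}$; when $2(t-j)\le D$, substituting (a) collapses this to a bound of the form $C(p)\cdot n^{t/2}/M_{K_{1,t},H}$, which is $o(1)$ by (b). For the distinct-center patterns in which the edge $\{c_1,c_2\}$ is shared, the vertex-difference exponent is zero and $S_1\triangle S_2$ is a double star with two centers of degree $t-1$; bounding the embeddings by choosing the two centers independently yields $M_{S_1\triangle S_2,H}\lesssim M_{K_{1,t-1},H}^{2}$, and applying (a) at $s=t-1$ then forces the ratio to be $O(n^{-1})=o(1)$. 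The remaining distinct-center patterns with no shared edges are analyzed by similar direct estimates, using that their $S_1\triangle S_2=S_1\cup S_2$ contains two vertices that must simultaneously be of large degree in $H$.

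The principal technical obstacle is the shared-center sub-case with $2(t-j)>D$, permitted because $t$ may equal $D$ and so $2(t-j)$ can be as large as $2(D-1)$; here (a) does not directly bound $M_{K_{1,2(t-j)},H}$, since the star in question has more than $D$ edges. The plan for this sub-case is to regard $M_{K_{1,s},H}=\sum_{v\in V(H)}(d_v)_{(s)}$ as an $s$-th moment of the degree sequence of $H$ and to combine the pointwise inequality $(d_v)_{(2(t-j))}\le (d_v)_{(t-j)}^{2}$ with a Cauchy--Schwarz or power-mean inequality to reduce $M_{K_{1,2(t-j)},H}$ to a quantity controlled by $M_{K_{1,t-j},H}$, to which (a) \emph{does} apply since $t-j\le D-1$. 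The constant-degree hypothesis $D=O(1)$ is essential throughout: it keeps the case analysis finite and prevents the constants arising from these elementary inequalities from accumulating into an $\omega(1)$ loss.
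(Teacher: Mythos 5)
Your case decomposition (empty symmetric difference, shared center, distinct centers with a shared edge) matches the paper's, and your handling of the first two regimes that use maximality of $\mathbf{S}$ directly is correct. The distinct-center case also works: $M_{S_1\triangle S_2,H}\le M_{K_{1,t-1},H}^2$, plus maximality at $s=t-1$, yields a bound of order $n^{-1}$, matching the paper's $1/n$ bound via its spanning-sub-shape decomposition $S_1'\sqcup S_2'$.

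The gap is in the shared-center sub-case with $2(t-j)>D$. Your proposed chain $M_{K_{1,2(t-j)},H}\le\sum_v(d_v)_{(t-j)}^2\le\bigl(\sum_v(d_v)_{(t-j)}\bigr)^2=M_{K_{1,t-j},H}^2$, followed by maximality at $s=t-j$, produces exactly $1$, not $o(1)$. Tracking the powers: with $x:=j$ shared leaves the ratio equals $n^j M_{K_{1,2(t-j)},H}/\bigl(M_{K_{1,t},H}^2(\tfrac{1-p}{p})^j\bigr)$, while maximality at $s=t-j$ gives $M_{K_{1,t-j},H}^2\lesssim M_{K_{1,t},H}^2\,n^{-j}(\tfrac{1-p}{p})^j$; substituting cancels every factor. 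The inequality $\sum a_v^2\le(\sum a_v)^2$ is saturated precisely when one vertex dominates the degree sequence, which is exactly the regime your estimate must handle, so there is no slack to recover. To close this you need an argument with genuine extra structure. The paper gets it by keeping the maximum degree $\Delta=\max_i d_i$ as a free parameter: it uses $\sum_i d_i^{2t-2x}\le\bigl(\sum_i d_i^t\bigr)\Delta^{t-2x}$ (its Lemma on power-sum bounds), combines that with the two facts $\sum_i d_i^t\ge\Delta^t$ and the divergence $\sum_i d_i^t\gg n^{(t+1)/2}(\tfrac{p}{1-p})^{t/2}$, and then splits on whether $\Delta$ exceeds $n^{1/2+1/(2t)}(\tfrac{p}{1-p})^{1/2}$; each branch gives a saving of a positive power of $n$. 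Alternatively, the $\ell_p$-norm monotonicity $\sum_v d_v^{2(t-j)}\le\bigl(\sum_v d_v^t\bigr)^{2(t-j)/t}$ (valid since $2(t-j)>t$ in this sub-case), together with Lemma~\ref{lemma: falling-factorial} to pass between power sums and falling-factorial sums, reduces directly to a fractional power of $M_{K_{1,t},H}$ and yields the bound $n^{-j/t}=o(1)$ — but this is not what your outline does, since it reduces to $M_{K_{1,t-j},H}$ rather than $M_{K_{1,t},H}^{2(t-j)/t}$. As written, the proposal does not prove the hard sub-case.
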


\subsection{Auxillary Lemmas}
In this section, we have a series of auxiliary lemmas needed for the proof of Theorem \ref{thm:main}.

We start with a standard result, which we state as a fact.
\begin{fact}
    The Walsh-Fourier basis $(\chi_{S})_{S \subseteq \binom{[n]}{2}}$ (the degree-$D$ Walsh-Fourier basis $(\chi_{S})_{S \subseteq \binom{[n]}{2}: |S| \le D}$ resp.) with respect to the Erd\H{o}s-R\'{e}nyi distribution $\mathbb{Q} := G(n,p)$ form an orthonormal basis for $\mathbb{R}[X]$ ($\mathbb{R}[X]^{\le D}$ resp.) with respect to the inner product $\langle \cdot , \cdot \rangle_{\mathbb{Q}}$ defined by $\langle f, g \rangle_{\mathbb{Q}} = \mathbb{E}_{\mathbb{Q}}[fg]$.
\end{fact}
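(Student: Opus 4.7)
The plan is standard: verify orthonormality by a direct moment computation exploiting the independence of edges under $\mathbb{Q}=G(n,p)$, and then verify that the $\chi_S$'s span by a dimension or change-of-basis argument.

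First, I would compute $\langle \chi_S, \chi_{S'} \rangle_{\mathbb{Q}}$ for arbitrary $S, S' \subseteq \binom{[n]}{2}$. Writing out the definition gives
\[
\langle \chi_S, \chi_{S'} \rangle_{\mathbb{Q}} = \mathbb{E}_{\mathbb{Q}}\!\left[\prod_{e \in S} \frac{G_e - p}{\sqrt{p(1-p)}} \cdot \prod_{e \in S'} \frac{G_e - p}{\sqrt{p(1-p)}}\right].
\]
Under $\mathbb{Q}$ the edge indicators $(G_e)$ are mutually independent Bernoulli$(p)$, so the expectation factorizes as a product over edges $e \in S \cup S'$. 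For $e \in S \triangle S'$ the corresponding factor equals $\mathbb{E}[(G_e - p)/\sqrt{p(1-p)}] = 0$, and for $e \in S \cap S'$ it equals $\mathbb{E}[(G_e - p)^2/(p(1-p))] = 1$. Hence the inner product is $1$ when $S = S'$ and $0$ otherwise, giving orthonormality.

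For the spanning (and hence basis) statement, I would use a dimension count combined with a triangularity observation. The space of functions $\{0,1\}^{\binom{[n]}{2}} \to \mathbb{R}$, which coincides with $\mathbb{R}[X]$ after reducing via $X_e^2 = X_e$, has dimension $2^{\binom{n}{2}}$, matching the number of Walsh-Fourier basis elements $\chi_S$; since these are orthonormal they are linearly independent and thus form a basis. For the degree-$\leq D$ version, I would note that $\chi_S$ has degree exactly $|S|$, so $\{\chi_S : |S| \le D\} \subseteq \mathbb{R}[X]^{\le D}$; conversely each monomial $\prod_{e \in T} X_e$ with $|T| \le D$ can be expanded as
\[
\prod_{e \in T} \bigl(p + \sqrt{p(1-p)}\,\chi_{\{e\}}(G)\bigr) = \sum_{S \subseteq T} c_S\, \chi_S(G)
\]
for appropriate constants $c_S$, so the $\chi_S$ with $|S| \le D$ span $\mathbb{R}[X]^{\le D}$. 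Combined with orthonormality, this yields the claim.

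No step is truly an obstacle here; the only mild care needed is the remark that ``polynomials'' are being identified with functions on $\{0,1\}^{\binom{[n]}{2}}$ (equivalently, multilinear polynomials), which makes the dimension count on the full space work cleanly.
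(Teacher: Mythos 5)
Your proof is correct and is the standard argument for this fact; the paper itself states it without proof as a ``standard result,'' and your computation (orthonormality via independence of the edge variables, spanning via a dimension count and the change-of-basis expansion $X_e = p + \sqrt{p(1-p)}\,\chi_{\{e\}}$) is exactly the conventional justification one would supply.
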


The following lemma is taken from \cite{mossel2022second}, and will be useful throughout the calculations in this paper. Recall the definition of $M_{\mathbf{S},H}$ in \eqref{dfn:copies}.
\begin{lemma}\label{lemma:inclusion-prob}
    For a fixed shape $S$, and $(G, \bold{H})$ drawn from $\mathbb{P}$, 
    \begin{align*}
        \mathbb{P}(S \subseteq \bold{H}) = 
\frac{M_{S,H}}{M_S}
    \end{align*}
\end{lemma}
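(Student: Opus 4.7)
The strategy is a direct double-counting argument, using the explicit description of the planted distribution $\mathbb{P}$. Recall that $\mathbf{H}$ is obtained by sampling a uniformly random labelled copy of $H$ in $K_n$, which is equivalent to sampling a uniformly random injection $\phi: V(H) \to [n]$ and setting $E(\mathbf{H}) = \{\{\phi(u),\phi(v)\} : \{u,v\} \in E(H)\}$. Under this parametrization, the total number of possible $\phi$'s is $n_{(|V(H)|)}$, and each is equally likely.

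First, I would rewrite the event $\{S \subseteq \mathbf{H}\}$ in terms of $\phi$. Writing $s := |V(S)|$, the event $S \subseteq \mathbf{H}$ is equivalent to (i) $V(S) \subseteq \phi(V(H))$ and (ii) $\psi := \phi^{-1}|_{V(S)}$ is an edge-preserving map from $S$ into $H$. In other words, $(\psi^{-1}(V(S)), \psi^{-1})$ is a labelled copy of $S$ inside $H$ in the sense of the paper. Conversely, any such labelled copy $(T,\gamma)$ of $S$ in $H$ together with any injective extension of $\gamma^{-1}$ from $V(H) \setminus T$ into $[n] \setminus V(S)$ produces exactly one valid $\phi$.

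Second, I would count: the number of labelled copies of $S$ in $H$ is $M_{S,H}$, and each such copy can be extended to a full injection $\phi: V(H) \to [n]$ in exactly $(n-s)_{(|V(H)|-s)}$ ways, since the remaining $|V(H)|-s$ vertices of $H$ must be mapped injectively into the $n-s$ vertices of $[n]\setminus V(S)$. Hence the number of favorable $\phi$'s is $M_{S,H}\cdot (n-s)_{(|V(H)|-s)}$.

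Finally, I would divide by the total count and simplify. Using the factorization $n_{(|V(H)|)} = n_{(s)} \cdot (n-s)_{(|V(H)|-s)}$ and the identity $M_S = n_{(s)}$,
\[
\mathbb{P}(S \subseteq \mathbf{H}) \;=\; \frac{M_{S,H}\cdot (n-s)_{(|V(H)|-s)}}{n_{(|V(H)|)}} \;=\; \frac{M_{S,H}}{n_{(s)}} \;=\; \frac{M_{S,H}}{M_S},
\]
which is the claimed identity. The only subtle point is carefully verifying the bijection in the second step between favorable $\phi$'s and pairs (labelled copy of $S$ in $H$, injective extension), so that no overcounting occurs; this follows because $\phi$ is injective, which uniquely determines $\psi$ from $\phi$ and vice versa given the extension.
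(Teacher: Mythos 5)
Your proof is correct, and it is a genuinely different argument from the one in the paper. You parametrize the random placement $\mathbf{H}$ by a uniformly random injection $\phi: V(H)\to[n]$, set up an explicit bijection between favorable $\phi$'s and pairs (labelled copy of $S$ in $H$, injective extension), and count both sides. The paper instead uses a one-line linearity-of-expectation and symmetry argument: the number of labelled copies of $S$ inside $\mathbf{H}$ is deterministically $M_{S,H}$, so taking expectations and summing indicators over the $M_S$ copies of $S$ in $K_n$ (each having the same inclusion probability by the symmetry of $K_n$ under vertex permutations) gives $M_{S,H} = M_S\cdot\mathbb{P}(S\subseteq\mathbf{H})$. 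Your approach is more elementary and self-contained, making the bijection fully explicit; the paper's is shorter but relies implicitly on the symmetry observation that $\mathbb{P}(T\subseteq\mathbf{H})$ is the same for all copies $T\cong S$. One minor typo in your write-up: where you write ``$(\psi^{-1}(V(S)),\psi^{-1})$ is a labelled copy of $S$ inside $H$,'' the first coordinate should be the image $\psi(V(S))$ (equivalently $\phi^{-1}(V(S))$) rather than $\psi^{-1}(V(S))$, since $\psi$ maps $V(S)$ into $V(H)$; the surrounding argument makes clear this is what you mean.
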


\begin{proof}[Proof of Lemma \ref{lemma:inclusion-prob}]
    \begin{align*}
        M_{S,H} = \mathbb{E}[\# \{\text{copies of } S \text{ inside } \bold{H}\} ] = M_S \cdot \mathbb{P}(S \subseteq \bold{H}).
    \end{align*}
\end{proof}

The following proposition calculates the advantage for all planted subgraph detection tasks.
\begin{proposition}\label{prop:Adv}
    For the planted subgraph detection task, the square of the degree-$D$ advantage for testing distribution $\mathbb{P}$ against distribution $\mathbb{Q}$ is
    \begin{align*}
        \left(\text{Adv}^{\le D}\right)^2 = \sum_{\mathbf{S} : |\mathbf{S}| \le D } \frac{M_{\mathbf{S}, H}^2}{M_\mathbf{S} \cdot |\text{Aut}(\mathbf{S})|} \left(\frac{1-p}{p}\right)^{|\mathbf{S}|},
    \end{align*}
    where the sum is over (the isomorphism classes of) the shapes $\mathbf{S}$ with at most $D$ edges.
\end{proposition}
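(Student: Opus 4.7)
The plan is to reduce the supremum defining the advantage to an explicit sum via the orthonormal Walsh--Fourier basis, compute each Fourier coefficient of the likelihood ratio using the product structure of the planted model, and finally repackage the resulting sum over edge subsets into a sum over isomorphism classes.

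First, I would use the orthonormality of $(\chi_S)_{|S|\le D}$ in $L^2(\mathbb{Q})$ to expand any $f\in\mathbb{R}[X]^{\le D}$ as $f=\sum_{|S|\le D}c_S\chi_S$, which gives $\mathbb{E}_\mathbb{Q}[f^2]=\sum_S c_S^2$ and $\mathbb{E}_\mathbb{P}[f]=\sum_S c_S\,\mathbb{E}_\mathbb{P}[\chi_S]$. A one-line Cauchy--Schwarz argument (saturated by taking $c_S$ proportional to $\mathbb{E}_\mathbb{P}[\chi_S]$, i.e.\ by the truncated low-degree likelihood ratio) then yields the standard identity
\[
(\text{Adv}^{\le D})^2 \;=\; \sum_{S\subseteq\binom{[n]}{2}:\;|S|\le D}\mathbb{E}_\mathbb{P}[\chi_S]^2.
\]

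Second, I would compute $\mathbb{E}_\mathbb{P}[\chi_S]$ for a fixed edge subset $S$ by conditioning on the planted copy $\mathbf{H}$. Under $\mathbb{P}$, edges lying inside $\mathbf{H}$ are forced to equal $1$, while all other edges remain independent Bernoulli$(p)$. Since $\chi_S$ factorizes as a product of edge-factors, and each edge-factor has conditional mean $\sqrt{(1-p)/p}$ when the edge lies in $\mathbf{H}$ and $0$ otherwise, this gives $\mathbb{E}_\mathbb{P}[\chi_S\mid\mathbf{H}]=\bigl(\tfrac{1-p}{p}\bigr)^{|S|/2}\mathbf{1}[S\subseteq\mathbf{H}]$. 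Taking expectations and applying Lemma~\ref{lemma:inclusion-prob} yields $\mathbb{E}_\mathbb{P}[\chi_S]=\bigl(\tfrac{1-p}{p}\bigr)^{|S|/2}\,M_{S,H}/M_S$, a quantity that depends on $S$ only through its isomorphism class.

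Third, I would collect the Fourier terms by unlabelled shape. The Fourier sum runs over edge subsets $S$, so the number of $S$'s of a given shape $\mathbf{S}$ equals the number of \emph{unlabelled} copies of $\mathbf{S}$ in $K_n$, which is $M_{\mathbf{S}}/|\mathrm{Aut}(\mathbf{S})|$ (each unlabelled copy corresponds to $|\mathrm{Aut}(\mathbf{S})|$ labelled copies). Multiplying the common value of $\mathbb{E}_\mathbb{P}[\chi_S]^2$ by this count and simplifying gives exactly the stated formula. The only real subtlety in the whole argument sits in this last bookkeeping step: the Walsh--Fourier basis is indexed by edge subsets (i.e.\ unlabelled copies) whereas $M_\mathbf{S}$ counts labelled copies, and the factor of $|\mathrm{Aut}(\mathbf{S})|$ must be inserted in exactly the right place. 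Beyond that, every step is routine.
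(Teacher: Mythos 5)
Your proposal is correct and follows essentially the same route as the paper's proof: orthonormal expansion in the Walsh--Fourier basis to reduce $(\text{Adv}^{\le D})^2$ to $\sum_{|S|\le D}\mathbb{E}_{\mathbb{P}}[\chi_S]^2$, computation of $\mathbb{E}_{\mathbb{P}}[\chi_S]$ by conditioning on the planted copy (yielding $\bigl(\tfrac{1-p}{p}\bigr)^{|S|/2}\mathbf{1}[S\subseteq\mathbf{H}]$, then Lemma~\ref{lemma:inclusion-prob}), and finally grouping edge subsets by isomorphism class using the count $M_{\mathbf{S}}/|\mathrm{Aut}(\mathbf{S})|$ of unlabelled copies. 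The only cosmetic difference is that you invoke Cauchy--Schwarz explicitly for the optimization step, whereas the paper simply states the maximizing choice of Fourier coefficients; these are the same argument.
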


The following proposition gives a way of double counting the number of pairs of copies of two shapes in a graph in terms of their intersecting patterns.
\begin{proposition}\label{prop:total-intersecting}
    Let $\mathbf{S}_1$ and $\mathbf{S}_2$ be shapes, and $G$ be a graph. Then,
    \begin{align*}
        M_{\mathbf{
S}_1, G} M_{\mathbf{S}_2, G} = \sum_{i(S_1, S_2) \in \text{Inter}(\mathbf{S}_1, \mathbf{S}_2)} M_{S_1 \cup S_2, G}.
    \end{align*}
\end{proposition}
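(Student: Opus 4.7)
The plan is to prove the identity by a double counting / partition argument. The left-hand side counts ordered pairs $((S_1,\gamma_1),(S_2,\gamma_2))$, where $(S_i,\gamma_i)$ is a labelled copy of $\mathbf{S}_i$ in $G$, since $M_{\mathbf{S}_i,G}$ is by definition the number of such labelled copies. Every such pair has a canonically determined intersecting pattern: form the two subsets $V(S_1),V(S_2)\subseteq V(G)$ of the ambient vertex set and record, using the labellings $\gamma_1,\gamma_2$, how the elements of $V(\mathbf{S}_1)$ and $V(\mathbf{S}_2)$ that land in $V(S_1)\cap V(S_2)$ are paired up. Because this is exactly the data an isomorphism class in $\mathrm{Inter}(\mathbf{S}_1,\mathbf{S}_2)$ records, the set of all ordered pairs of labelled copies decomposes as a disjoint union, indexed by $i(S_1,S_2)\in\mathrm{Inter}(\mathbf{S}_1,\mathbf{S}_2)$, of those pairs realizing the pattern $i(S_1,S_2)$.

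First I would fix an intersecting pattern $i(S_1,S_2)$ and exhibit a bijection between the pairs realizing it and the set of labelled copies of the union shape $S_1\cup S_2$ in $G$ (of which there are $M_{S_1\cup S_2,G}$ by definition). In the forward direction, given such a pair $((S_1,\gamma_1),(S_2,\gamma_2))$, I take $S:=S_1\cup S_2\subseteq G$ with its inherited edge set and build the labelling $\gamma\colon V(S)\to V(\mathbf{S}_1\cup\mathbf{S}_2)$ by using $\gamma_1$ on $V(S_1)$ and $\gamma_2$ on $V(S_2)$; these agree on $V(S_1)\cap V(S_2)$ precisely because the pattern $i(S_1,S_2)$ dictates how $\gamma_1^{-1}\circ\gamma_2$ acts on that intersection, so $\gamma$ is well-defined and is an isomorphism onto the union shape as defined in Definition \ref{dfn:sym-diff-shape}. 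In the reverse direction, a labelled copy $(S,\gamma)$ of $S_1\cup S_2$ in $G$ restricts canonically to labelled copies of $\mathbf{S}_1$ and of $\mathbf{S}_2$ (take the preimages under $\gamma$ of the two ``marked'' vertex sets built into the union shape), and by construction this pair realizes the pattern $i(S_1,S_2)$. The two constructions are inverse to each other.

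Summing the bijection over all intersecting patterns yields
\[
M_{\mathbf{S}_1,G}\,M_{\mathbf{S}_2,G}
=\sum_{i(S_1,S_2)\in\mathrm{Inter}(\mathbf{S}_1,\mathbf{S}_2)} M_{S_1\cup S_2,G},
\]
which is the claim.

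The only subtle point, and what I expect to be the main thing to get right, is the bookkeeping of labels in the bijection: I must make sure that the union shape $\mathbf{S}_1\cup\mathbf{S}_2$ attached to the pattern $i(S_1,S_2)$ is well-defined as a labelled object (i.e., that the abstract gluing of $V(\mathbf{S}_1)$ and $V(\mathbf{S}_2)$ specified by the pattern makes the two naive labellings compatible), and that ``labelled copy of $S_1\cup S_2$ in $G$'' is exactly the right notion so that $M_{S_1\cup S_2,G}$ is indeed the count that appears on the right-hand side. Once this is carefully set up, the bijection and the summation are immediate.
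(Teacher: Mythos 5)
Your argument is correct and is essentially the same double-counting-by-intersecting-pattern argument the paper uses; the bijection you spell out between pairs realizing a fixed pattern and labelled copies of the union shape is exactly what the paper records as a remark immediately after Definition~\ref{dfn:sym-diff-shape} and then invokes in its one-line proof. The labelling subtlety you flag is real but is handled by that same remark, so nothing is missing.
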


\begin{proof}[Proof of Proposition~\ref{prop:total-intersecting}]
    The left hand side is the number of pairs of copies of $\mathbf{S}_1$ and $\mathbf{S}_2$. The right hand side counts the same number by enumerating over intersecting patterns of $\mathbf{S}_1$ and $\mathbf{S}_2$, and then counting the number of pairs isomorphic to a specific intersecting pattern.
\end{proof}

First, we state a lemma that expresses the number of copies of a star shape in any graph in terms of its degree sequence.
\begin{lemma} \label{lemma:star-shape-copy}
    Let $H$ be a graph, and $\mathbf{S} \cong  \mathbf{K}_{1,t}$ be a star shape. Then,
    \begin{align*}
        M_{\mathbf{S}, H} = \sum_{i \in V(H)} (d_i)_{(t)},
    \end{align*}
    where $d_i = \text{deg}_H(i)$.
\end{lemma}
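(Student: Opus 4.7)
The plan is to prove the identity by a direct bijective counting argument, unwinding the definition of a labelled copy. First, I would fix once and for all a distinguished ``center'' vertex $c \in V(\mathbf{S})$ together with an ordering $\ell_1, \dots, \ell_t$ of the remaining leaves of $\mathbf{S}$. When $t \geq 2$, the vertex $c$ is the unique vertex of $\mathbf{S}$ of degree $t$, so this choice is canonical; when $t = 1$, either endpoint may be chosen as $c$ since $\mathbf{K}_{1,1}$ is symmetric. The choice of $c$ and of the leaf ordering is arbitrary and does not affect the final count.

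Next, given a labelled copy $(S, \gamma)$ of $\mathbf{S}$ in $H$, I would associate to it the data $(i, (v_1, \dots, v_t))$, where $i := \gamma^{-1}(c) \in V(H)$ and $v_j := \gamma^{-1}(\ell_j)$ for $1 \leq j \leq t$. By construction $v_1, \dots, v_t$ are pairwise distinct vertices of $H$ each adjacent to $i$, since $\gamma$ is an isomorphism and $c$ is adjacent in $\mathbf{S}$ to each $\ell_j$. Conversely, any tuple $(i, (v_1, \dots, v_t))$ with $i \in V(H)$ and $v_1, \dots, v_t$ distinct neighbors of $i$ in $H$ recovers a unique labelled copy: take $S \subseteq H$ to be the edge-induced subgraph with edges $\{\{i, v_j\} : 1 \leq j \leq t\}$ and let $\gamma$ be the unique bijection sending $i \mapsto c$ and $v_j \mapsto \ell_j$. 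These two operations are mutual inverses, establishing a bijection.

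Finally, for each fixed center $i \in V(H)$, the number of ordered tuples of $t$ distinct neighbors of $i$ is precisely $(d_i)_{(t)}$ by definition of the falling factorial (with the convention $(d_i)_{(t)} = 0$ when $t > d_i$, matching the fact that no such tuple exists in that case). Summing the bijection over all $i \in V(H)$ yields $M_{\mathbf{S}, H} = \sum_{i \in V(H)} (d_i)_{(t)}$, as claimed.

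I do not anticipate any real obstacle, as the argument is essentially a definition-unpacking exercise. The only mildly subtle point is the $t = 1$ case, in which the ``center'' and ``leaf'' roles in $\mathbf{K}_{1,1}$ are interchangeable; however, the bijective count above goes through uniformly and recovers the familiar identity $2|E(H)| = \sum_{i \in V(H)} d_i$ corresponding to the two labelled copies per edge.
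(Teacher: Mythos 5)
Your proof is correct and takes essentially the same approach as the paper's: fix a center vertex, count ordered $t$-tuples of distinct neighbors to get $(d_i)_{(t)}$ copies rooted at $i$, and sum over $i \in V(H)$. You supply a bit more detail (spelling out the bijection and the $t=1$ edge case), but the substance is identical.
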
 

The following lemmas will be useful when dealing with sums of falling factorials that arise from Lemma \ref{lemma:star-shape-copy}.
\begin{lemma} \label{lemma: falling-factorial}
    Let $d_1, \dots, d_k$ be a sequence of natural numbers taking values at most $k$ for some function $k = k(n)$, and $t \in \mathbb{N}$ be a constant. If $\sum_{i\in [k]} d_i^t = \omega(k)$ then \[\sum_{i \in [k]} (d_i)_{(t)}=(1-o(1))\left(\sum_{i \in [k]} d_i^t\right).\]
\end{lemma}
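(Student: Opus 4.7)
\medskip

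\noindent\textbf{Proof proposal.} The plan is to reduce the falling factorial to the ordinary power plus a controlled remainder, and then show the remainder is $o\!\left(\sum_i d_i^t\right)$ under the given growth hypothesis. Since $t$ is a fixed constant, there exist constants $c_0,\ldots,c_{t-1}$ (the signed Stirling numbers of the first kind) such that, as a polynomial identity valid for every real $x$,
\[
(x)_{(t)} \;=\; x^t + \sum_{j=0}^{t-1} c_j\, x^j.
\]
Applying this with $x = d_i$ and summing over $i \in [k]$ reduces the claim to showing that for every fixed $0 \le j \le t-1$,
\[
\sum_{i \in [k]} d_i^{\,j} \;=\; o\!\left(\sum_{i \in [k]} d_i^{\,t}\right).
\]

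The case $j=0$ is immediate: $\sum_{i} d_i^0 = k$, which is $o\!\left(\sum_i d_i^t\right)$ by the hypothesis $\sum_i d_i^t = \omega(k)$. For $1 \le j \le t-1$, I would apply H\"older's inequality with conjugate exponents $p = t/j$ and $q = t/(t-j)$ to the factorization $d_i^j = d_i^j \cdot 1$, which yields
\[
\sum_{i \in [k]} d_i^{\,j} \;\le\; \Bigl(\sum_{i \in [k]} d_i^{\,t}\Bigr)^{j/t} k^{\,(t-j)/t}.
\]
Dividing through gives
\[
\frac{\sum_{i} d_i^{\,j}}{\sum_{i} d_i^{\,t}} \;\le\; \left(\frac{k}{\sum_{i} d_i^{\,t}}\right)^{(t-j)/t} \;=\; o(1),
\]
again using $\sum_i d_i^t = \omega(k)$ and the fact that $(t-j)/t$ is a positive constant.

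Combining the two cases, the entire lower-order part of the Stirling expansion contributes $o\!\left(\sum_i d_i^t\right)$, so $\sum_i (d_i)_{(t)} = (1+o(1))\sum_i d_i^t$, which is the claimed asymptotic equality. I do not expect any real obstacle: the argument is a short application of the Stirling expansion together with H\"older, and the constant-$t$ assumption ensures both that the expansion has a bounded number of terms with bounded coefficients and that the H\"older exponent $(t-j)/t$ is bounded away from $0$. The hypothesis $d_i \le k$ is not directly invoked in the main step, but it is tacitly used to guarantee finiteness and to justify the formulation (in particular, that $d_i$ and $k$ are comparable enough quantities for the H\"older bound to be meaningful).
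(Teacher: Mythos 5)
Your proof is correct, and it shares the same key estimate with the paper's proof — a H\"older bound of the form $\sum_i d_i^{j} \le \bigl(\sum_i d_i^t\bigr)^{j/t} k^{(t-j)/t}$, which is used to absorb lower-order terms once one has established $\sum_i d_i^t = \omega(k)$. Where you differ is in the reduction step: you expand $(x)_{(t)} = x^t + \sum_{j=0}^{t-1} c_j x^j$ via Stirling numbers of the first kind and then handle \emph{every} lower-order exponent $0 \le j \le t-1$ uniformly with H\"older, whereas the paper avoids the full expansion by proving the cruder pointwise lower bound $(d_i)_{(t)} \ge d_i^t - t^t - t^2 d_i^{t-1}$, so that only the $j=0$ term (trivially $o(1)$ by hypothesis) and the single $j=t-1$ term (H\"older) need to be controlled. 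Your version is more systematic and arguably cleaner to verify; the paper's version is shorter because it short-circuits the expansion. Both routes are valid, and both implicitly use that $(d_i)_{(t)} \le d_i^t$ to pin down the sign, so the final statement can be written as $(1-o(1))\sum_i d_i^t$. Your observation that the hypothesis $d_i \le k$ is not actually invoked in the estimate is also correct — it is present for contextual reasons in the application rather than as a logical necessity.
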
 

\begin{lemma} \label{lemma: sum-di-ub}
    Suppose $d_i \geq 0$ is a sequence of nonnegative real numbers. Let $\Delta = \max_{i} d_i$. Let $p \ge q$ be two nonnegative integers. Then
    \begin{align*}
        \sum_{i} d_i^p \le \left(\sum_{i} d_i^q\right) \Delta^{p-q}.
    \end{align*}
\end{lemma}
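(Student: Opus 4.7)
The plan is to prove this bound by the most direct route: a termwise comparison followed by summation. Since $d_i \ge 0$ for every $i$ and $\Delta = \max_i d_i$, each individual term $d_i$ satisfies $d_i \le \Delta$. Raising both sides of this inequality to the nonnegative integer power $p-q \ge 0$ preserves the inequality (because the function $x \mapsto x^{p-q}$ is monotone nondecreasing on $[0,\infty)$), so $d_i^{p-q} \le \Delta^{p-q}$.

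Next, I would factor each left-hand side term as $d_i^p = d_i^q \cdot d_i^{p-q}$, which is valid since $p \ge q$ are nonnegative integers. Multiplying the termwise bound $d_i^{p-q} \le \Delta^{p-q}$ by the nonnegative quantity $d_i^q$ yields
\[
d_i^p = d_i^q \cdot d_i^{p-q} \;\le\; d_i^q \cdot \Delta^{p-q}.
\]
Summing this inequality over all indices $i$ and pulling the constant $\Delta^{p-q}$ out of the sum gives
\[
\sum_i d_i^p \;\le\; \Delta^{p-q} \sum_i d_i^q,
\]
which is exactly the desired bound.

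There is no genuine obstacle here; the only subtle point is ensuring that all quantities involved are nonnegative so that the monotonicity step and the factoring step are both valid, and this is guaranteed by the hypothesis $d_i \ge 0$ together with $p \ge q \ge 0$. The proof will therefore be only a few lines long.
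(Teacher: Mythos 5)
Your proof is correct and uses exactly the same argument as the paper: factor $d_i^p = d_i^q \cdot d_i^{p-q}$, bound $d_i^{p-q} \le \Delta^{p-q}$, and sum.
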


\subsection{Proof of Theorem \ref{thm:main}: Putting it all together} 

For the signed count polynomial $f_{\mathbf{S}}$ of some shape $\mathbf{S}$, the following proposition expresses its first, second moments under $\mathbb{Q}$ and the first moment under $\mathbb{P}$ in simple formulas, whose proof is deferred to Section \ref{subsection:auxillary-proof} in the appendix.

\begin{proposition}\label{prop:simple-moments}
    Let $\mathbf{S}$ be a shape. Then the following identities holds:
    \begin{align*}
        \mathbb{E}_{\mathbb{Q} }[f_{\mathbf{S} } ] &= 0,\\
        \mathbb{E}_{\mathbb{Q} }[f_{\mathbf{S} }^2 ] &= \frac{M_{\mathbf{S} }}{|\text{Aut}(\mathbf{S})|},\\
        \mathbb{E}_{\mathbb{P} }[f_{\mathbf{S} } ] &=  \frac{M_{\mathbf{S}, H }}{|\text{Aut}(\mathbf{S})|} \left(\frac{1-p}{p}\right)^{|\mathbf{S} |/2}.
    \end{align*}
\end{proposition}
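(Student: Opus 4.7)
The plan is to treat the three identities in order, each reducing to a short direct calculation on the Walsh--Fourier basis, combined with Lemma~\ref{lemma:inclusion-prob} for the third one. I will start with the two moment identities under $\mathbb{Q}$, since both are immediate from the orthonormality fact stated just above the proposition.

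For $\mathbb{E}_{\mathbb{Q}}[f_{\mathbf{S}}]=0$, expand the sum $f_{\mathbf{S}} = \sum_{S\cong \mathbf{S}} \chi_S$ and note that each $\chi_S$ is a product over $\{i,j\}\in S$ of the mean-zero, independent (under $\mathbb{Q}$) variables $(G_{\{i,j\}}-p)/\sqrt{p(1-p)}$; since $\mathbf{S}$ is edge-induced, $S\neq \emptyset$, so the product has mean zero. Equivalently, $\chi_S$ is a nontrivial Walsh--Fourier basis element, hence $\langle \chi_S, \chi_\emptyset\rangle_{\mathbb{Q}}=0$. For $\mathbb{E}_{\mathbb{Q}}[f_{\mathbf{S}}^2]$, I would square $f_{\mathbf{S}}$ to get $\sum_{S_1,S_2\cong \mathbf{S}} \chi_{S_1}\chi_{S_2}$, and then by orthonormality of the Walsh--Fourier basis, $\mathbb{E}_{\mathbb{Q}}[\chi_{S_1}\chi_{S_2}]=\mathbbm{1}[S_1=S_2]$. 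So the second moment equals the number of edge sets $S\subseteq \binom{[n]}{2}$ with $S\cong \mathbf{S}$, which is the count of unlabelled copies of $\mathbf{S}$ in $K_n$, namely $M_{\mathbf{S}}/|\mathrm{Aut}(\mathbf{S})|$.

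For the first moment under $\mathbb{P}$, the natural step is to condition on the planted copy $\mathbf{H}$ of $H$. Given $\mathbf{H}$, the random graph equals $G_0\cup \mathbf{H}$ where $G_0\sim G(n,p)$ is independent. For a fixed edge $\{i,j\}$: if $\{i,j\}\in \mathbf{H}$ then $G_{\{i,j\}}\equiv 1$ and $\chi_{\{i,j\}}(G) = \sqrt{(1-p)/p}$ deterministically, while if $\{i,j\}\notin \mathbf{H}$ then $G_{\{i,j\}}\sim\mathrm{Bern}(p)$ and $\chi_{\{i,j\}}(G)$ has conditional mean $0$. Using conditional independence across edges of the noise layer $G_0$,
\[
\mathbb{E}_{\mathbb{P}}[\chi_S(G)\mid \mathbf{H}] \;=\; \prod_{\{i,j\}\in S}\mathbb{E}[\chi_{\{i,j\}}(G)\mid \mathbf{H}] \;=\; \mathbbm{1}[S\subseteq \mathbf{H}]\cdot \left(\frac{1-p}{p}\right)^{|S|/2}.
\]
Taking expectation over $\mathbf{H}$ and invoking Lemma~\ref{lemma:inclusion-prob} yields $\mathbb{E}_{\mathbb{P}}[\chi_S(G)] = (M_{S,H}/M_S)\cdot ((1-p)/p)^{|S|/2}$. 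Summing over the $M_{\mathbf{S}}/|\mathrm{Aut}(\mathbf{S})|$ many $S\cong \mathbf{S}$ in $K_n$, each with the same $M_{S,H}=M_{\mathbf{S},H}$, $M_S=M_{\mathbf{S}}$ and $|S|=|\mathbf{S}|$, gives exactly $\mathbb{E}_{\mathbb{P}}[f_{\mathbf{S}}] = (M_{\mathbf{S},H}/|\mathrm{Aut}(\mathbf{S})|)\cdot ((1-p)/p)^{|\mathbf{S}|/2}$.

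There is no serious obstacle here; the only subtle bookkeeping is the distinction between labelled and unlabelled copies (which enters both the denominator $|\mathrm{Aut}(\mathbf{S})|$ in the cardinality of $\{S\subseteq \binom{[n]}{2}:S\cong \mathbf{S}\}$, and in the application of Lemma~\ref{lemma:inclusion-prob}, whose right-hand side is written in terms of labelled-copy counts $M_{S,H},M_S$). I would make this explicit once at the start of the proof and use it consistently through the three computations.
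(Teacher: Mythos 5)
Your proof is correct and follows essentially the same route as the paper: the two $\mathbb{Q}$-moments drop out of Walsh--Fourier orthonormality, and the $\mathbb{P}$-mean is obtained by conditioning on the planted copy $\mathbf{H}$, computing $\mathbb{E}_{\mathbb{P}}[\chi_S\mid\mathbf{H}]=\mathbbm{1}[S\subseteq\mathbf{H}]\bigl(\tfrac{1-p}{p}\bigr)^{|S|/2}$, and invoking Lemma~\ref{lemma:inclusion-prob}. The only cosmetic difference is that the paper cites this last formula from the proof of Proposition~\ref{prop:Adv} rather than rederiving it inline as you do.
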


Now we are ready to present the full proof of our main theorem.

\begin{proof}[Proof of Theorem \ref{thm:main}]
    If $\limsup_{n \to \infty} 
\text{Adv}^{\le D} < \infty$, by \cite[Lemma 7.3]{coja2022statistical} we conclude no degree-$D$ polynomial $f$ achieves strong separation between $\mathbb{P}$ and $\mathbb{Q}$.

    From now on we consider the situation $\lim_{n \to \infty} \text{Adv}^{\le D} = \infty$. Let $\mathbf{S}$ be the star shape that maximizes $\frac{M_{\mathbf{S}, H}^2}{M_{\mathbf{S}}} \left(\frac{1-p}{p}\right)^{|\mathbf{S}|}$ among star shapes with at most $D$ edges. By Proposition \ref{prop:star-shape},
    \begin{align}
        \frac{M_{\mathbf{S}, H}^2}{M_{\mathbf{S}}} \left(\frac{1-p}{p}\right)^{|\mathbf{S}|} \to \infty. \label{ineq:advantage-condition}
    \end{align}

    We now aim to show that $f_{\mathbf{S}}$, the signed count of $\mathbf{S}$, achieves strong separation between $\mathbb{P}$ and $\mathbb{Q}$. From Proposition \ref{prop:simple-moments}, we have
    \begin{align*}
        \mathbb{E}_{\mathbb{Q} }[f_{\mathbf{S} } ] &= 0,\\
        \mathbb{E}_{\mathbb{Q} }[f_{\mathbf{S} }^2 ] &= \frac{M_{\mathbf{S} }}{|\text{Aut}(\mathbf{S})|},\\
        \mathbb{E}_{\mathbb{P} }[f_{\mathbf{S} } ] &=  \frac{M_{\mathbf{S}, H }}{|\text{Aut}(\mathbf{S})|} \left(\frac{1-p}{p}\right)^{|\mathbf{S} |/2}.
    \end{align*}
    Note that $\Var_{\mathbb{Q}}[f_{\mathbf{S}}] = \mathbb{E}_{\mathbb{Q} }[f_{\mathbf{S} }^2 ] - \mathbb{E}_{\mathbb{Q} }[f_{\mathbf{S} } ]^2 = \frac{M_{\mathbf{S}}}{|\text{Aut}(\mathbf{S})|}$ and $|\mathbb{E}_{\mathbb{P} }[f_{\mathbf{S} } ] - \mathbb{E}_{\mathbb{Q} }[f_{\mathbf{S} } ]| = \frac{M_{\mathbf{S}, H }}{|\text{Aut}(\mathbf{S})|} \left(\frac{1-p}{p}\right)^{|\mathbf{S} |/2}$. The condition in \eqref{ineq:advantage-condition} implies one side of the strong separation
    \begin{align}
        \sqrt{\Var_{\mathbb{Q}}[f_{\mathbf{S}}]} = o
\left(|\mathbb{E}_{\mathbb{P}}[f_{\mathbf{S}}] - \mathbb{E}_{\mathbb{Q}}[f_{\mathbf{S}}]|\right), \label{ineq:strong-separation-1}
    \end{align}
    where we use that $|\text{Aut}(\mathbf{S})| = O(1)$ as $\mathbf{S}$ has a constant number of edges.

    It remains to show $\sqrt{\Var_{\mathbb{P}}[f_{\mathbf{S}}]} = o\left(|\mathbb{E}_{\mathbb{P}}[f_{\mathbf{S}}] - \mathbb{E}_{\mathbb{Q}}[f_{\mathbf{S}}]|\right)$. Since $\mathbb{E}_{\mathbb{P}}[f_{\mathbf{S}}] - \mathbb{E}_{\mathbb{Q}}[f_{\mathbf{S}}] = \mathbb{E}_{\mathbb{P}}[f_{\mathbf{S}}]$ and $\Var_{\mathbb{P}}[f_{\mathbf{S}}] = \mathbb{E}_{\mathbb{P}}[f_{\mathbf{S}}^2] - \mathbb{E}_{\mathbb{P}}[f_{\mathbf{S}}]^2$, to show $\Var_{\mathbb{P}}[f_{\mathbf{S}}] = o\left(|\mathbb{E}_{\mathbb{P}}[f_{\mathbf{S}}] - \mathbb{E}_{\mathbb{Q}}[f_{\mathbf{S}}]|^2\right)$, it is equivalent to proving
    \begin{align*}
        \frac{\mathbb{E}_{\mathbb{P}}[f_{\mathbf{S}}^2]}{\mathbb{E}_{\mathbb{P}}[f_{\mathbf{S}}^2]} \le 1 + o(1).
    \end{align*}

    Recall $f_{\mathbf{S}} = \sum_{S \subseteq \binom{V}{2}: S \cong \mathbf{S}} \chi_S$. We now examine this ratio
    \begin{align}
        \frac{\mathbb{E}_{\mathbb{P} }[f_{\mathbf{S} }^2]}{\mathbb{E}_{\mathbb{P} }[f_{\mathbf{S} }]^2} &= \frac{\sum_{S,S' \subseteq \binom{V}{2}: S, S' \cong \mathbf{S} } \mathbb{E}_{\mathbb{P} }[\chi_S \chi_{S'}] }{ \frac{M_{\mathbf{S}, H}^2}{|\text{Aut}(\mathbf{S})|^2} \left(\frac{1-p}{p}\right)^{|\mathbf{S} |}}. \label{eq:ratio-main}
    \end{align}

    Let us compute
    \begin{align}
        \mathbb{E}_{\mathbb{P} }[\chi_S \chi_{S'}] &= \mathbb{E}_{\mathbb{P} }\left[\prod_{\{i,j\} \in S } \frac{G_{i,j} - p}{\sqrt{p(1-p)}} \prod_{\{i',j'\} \in S' } \frac{G_{i',j'} - p}{\sqrt{p(1-p)}}\right]\\
        &= \mathbb{E}_{\mathbb{P} }\left[\prod_{\{i,j\} \in S \triangle S' } \frac{G_{i,j} - p}{\sqrt{p(1-p)}} \prod_{\{i',j'\} \in S \cap S' } \left(\frac{G_{i',j'} - p}{\sqrt{p(1-p)}}\right)^2\right]\\
        &=\mathbb{E}_{\mathbb{P} }\left[\chi_{S \triangle S'} \prod_{\{i,j\} \in S \cap S' } \left(\frac{G_{i,j} - p}{\sqrt{p(1-p)}}\right)^2 \right]\\
        &= \mathbb{E}_{\bold{H} } \mathbb{E}_{\mathbb{P} } \left[\chi_{S \triangle S'} \prod_{\{i,j\} \in S \cap S' } \left(\frac{G_{i,j} - p}{\sqrt{p(1-p)}}\right)^2 \Bigg| \bold{H} \right]. \label{eq:fourier-expectation}
    \end{align}
    Observe that the conditional expectation above evaluates to $0$ whenever $S \triangle S'$ is not fully contained inside $\bold{H}$. For a fixed embedding $\bold{H}$ of $H$, if $S \triangle S' \subseteq \bold{H}$, then
    \begin{align}
        &\quad \mathbb{E}_{\mathbb{P} } \left[\chi_{S \triangle S'} \prod_{\{i,j\} \in S \cap S' } \left(\frac{G_{i,j} - p}{\sqrt{p(1-p)}}\right)^2 \Bigg| \bold{H} \right]\\
        &= \left(\frac{1-p}{p}\right)^{|S \triangle S'|/2} \mathbb{E}_{\mathbb{P} } \left[ \prod_{\{i,j\} \in S \cap S' } \left(\frac{G_{i,j} - p}{\sqrt{p(1-p)}}\right)^2 \Bigg| \bold{H} \right]\\
        &= \left(\frac{1-p}{p}\right)^{|S \triangle S'|/2} \left(\frac{1-p}{p}\right)^{|(S \cap S') \cap E(\bold{H})|}, \label{eq:conditonal-expectation}
    \end{align}
    where the last equality follows from $\mathbb{E}\left[\frac{(G_{i,j} - p)^2}{p(1-p)} \Bigg| \bold{H}\right] = \begin{cases}
        \frac{1-p}{p} & \quad \text{ if } \{i,j\} \in E(\bold{H}),\\
        1 & \quad \text{ otherwise}.
    \end{cases}$ Moreover, since $D = O(1)$ and $p = \Omega(1)$, we have
    \begin{align}
        \left(\frac{1-p}{p}\right)^{|(S \cap S') \cap E(\bold{H})|} &\le \left(\frac{1-p}{p}\right)^{O(1)} \le O(1). \label{ineq:(1-p)/p-ub}
    \end{align}
    Inserting \eqref{eq:conditonal-expectation} and \eqref{ineq:(1-p)/p-ub} back to \eqref{eq:fourier-expectation}, we get
    \begin{align}
        \mathbb{E}_{\mathbb{P} }[\chi_S \chi_{S'}] &= \mathbb{E}_{\bold{H} } \mathbb{E}_{\mathbb{P} } \left[\chi_{S \triangle S'} \prod_{\{i,j\} \in E(S \cap S') } \left(\frac{G_{i,j} - p}{\sqrt{p(1-p)}}\right)^2 \Bigg| \bold{H} \right]\\
        &\lesssim \mathbb{E}_{\bold{H} } \left[\boldsymbol{1}\{S \triangle S' \subseteq \bold{H}\} \left(\frac{1-p}{p}\right)^{|S \triangle S'|/2}\right]\\
        &= \left(\frac{1-p}{p}\right)^{|S \triangle S'|/2} \mathbb{P}(S \triangle S' \subseteq \bold{H} )\\
        &=  \left(\frac{1-p}{p}\right)^{|S \triangle S'|/2} \frac{M_{S\triangle S', H}}{M_{S\triangle S'}}, \label{ineq:fourier-expecation-bound}
    \end{align}
    where in the last line we use Lemma \ref{lemma:inclusion-prob}.

    Substituting the bound \eqref{ineq:fourier-expecation-bound} back to our ratio \eqref{eq:ratio-main}, we get
    \begin{align}
        \frac{\mathbb{E}_{\mathbb{P} }[f_{\mathbf{S} }^2]}{\mathbb{E}_{\mathbb{P} }[f_{\mathbf{S} }]^2} &\lesssim \frac{\sum_{S,S' \subseteq \binom{V}{2}: S, S' \cong \mathbf{S} }  \frac{M_{S\triangle S', H}}{M_{S\triangle S'}} \left(\frac{1-p}{p}\right)^{|S \triangle S'|/2} }{ \frac{M_{\mathbf{S}, H}^2}{|\text{Aut}(\mathbf{S})|^2} \left(\frac{1-p}{p}\right)^{|\mathbf{S} |}}\\
        &= \frac{1}{M_{\mathbf{S}, H}^2 \left(\frac{1-p}{p}\right)^{|\mathbf{S} |}} \cdot \sum_{\substack{((S, \gamma), (S', \gamma')):\\ S, S' \cong \mathbf{S}}} \frac{M_{S \triangle S', H}}{M_{S \triangle S'}} \left(\frac{1-p}{p}\right)^{|S \triangle S'|/2}\\
        &= \frac{1}{M_{\mathbf{S}, H}^2 \left(\frac{1-p}{p}\right)^{|\mathbf{S} |}} \cdot \sum_{\substack{i(S_1, S_2)\\ \in \text{Inter}(\mathbf{S}, \mathbf{S})}} \sum_{\substack{(S, S'):\\ (S, S') \\\cong i(S_1, S_2)}} \frac{M_{S \triangle S', H}}{M_{S \triangle S'}} \left(\frac{1-p}{p}\right)^{|S \triangle S'|/2}
    \end{align}
    \begin{align}
        &= \frac{1}{M_{\mathbf{S}, H}^2 \left(\frac{1-p}{p}\right)^{|\mathbf{S} |}} \cdot \Bigg(\sum_{\substack{i(S_1, S_2)\\ \in \text{Inter}(\mathbf{S}, \mathbf{S}):\\
        S_1 \cap S_2 =\emptyset } } \sum_{\substack{(S, S'):\\ (S, S') \\\cong i(S_1, S_2)}} \frac{M_{S_1 \triangle S_2, H}}{M_{S_1 \triangle S_2}} \left(\frac{1-p}{p}\right)^{|S_1 \triangle S_2|/2}\\
        &+ \sum_{\substack{i(S_1, S_2)\\ \in \text{Inter}(\mathbf{S}, \mathbf{S}):\\ S_1 \cap S_2 \ne \emptyset } } \sum_{\substack{(S, S'):\\ (S, S') \\\cong i(S_1, S_2)}} \frac{M_{S_1 \triangle S_2, H}}{M_{S_1 \triangle S_2}} \left(\frac{1-p}{p}\right)^{|S_1 \triangle S_2|/2}\Bigg),
    \end{align}
    where in the second line the summation is over pairs of copies of $\mathbf{S}$, rather than sets (unlabelled copies) isomorphic to $\mathbf{S}$, which cancels out the $|\text{Aut}(\mathbf{S})|^2$ from the first line.

    Let us examine the first term, corresponding to pairs with empty intersection:
    \begin{align*}
        \sum_{\substack{i(S_1, S_2)\\ \in \text{Inter}(\mathbf{S}, \mathbf{S}):\\
        S_1 \cap S_2 =\emptyset } } \sum_{\substack{(S, S'):\\ (S, S') \\\cong i(S_1, S_2)}} \frac{M_{S_1 \triangle S_2, H}}{M_{S_1 \triangle S_2}} &= \sum_{\substack{i(S_1, S_2)\\ \in \text{Inter}(\mathbf{S}, \mathbf{S}):\\
        S_1 \cap S_2 =\emptyset } } \sum_{\substack{(S, S'):\\ (S, S') \\\cong i(S_1, S_2)}} \frac{M_{S_1 \cup S_2, H}}{M_{S_1 \cup S_2}}\\
        &\le \sum_{\substack{i(S_1, S_2)\\ \in \text{Inter}(\mathbf{S}, \mathbf{S})}} \sum_{\substack{(S, S'):\\ (S, S') \\\cong i(S_1, S_2)}} \frac{M_{S_1 \cup S_2, H}}{M_{S_1 \cup S_2}}\\
        &= \sum_{\substack{i(S_1, S_2)\\ \in \text{Inter}(\mathbf{S}, \mathbf{S})} } M_{S_1 \cup S_2} \frac{M_{S_1 \cup S_2, H}}{M_{S_1 \cup S_2}}\\
        &= \sum_{\substack{i(S_1, S_2)\\ \in \text{Inter}(\mathbf{S}, \mathbf{S})} } M_{S_1 \cup S_2, H}\\
        &= M_{S_1, H} M_{S_2, H}\\
        &= M_{\mathbf{S}, H }^2,
    \end{align*}
    where in the second-to-last line we use Proposition \ref{prop:total-intersecting}. As a result, the first term can be bounded by
    \begin{align*}
        &\quad \frac{1}{M_{\mathbf{S}, H}^2 \left(\frac{1-p}{p}\right)^{|\mathbf{S} |}} \cdot \sum_{\substack{i(S_1, S_2)\\ \in \text{Inter}(\mathbf{S}, \mathbf{S}):\\
        S_1 \cap S_2 =\emptyset } } \sum_{\substack{(S, S'):\\ (S, S') \\\cong i(S_1, S_2)}} \frac{M_{S_1 \triangle S_2, H}}{M_{S_1 \triangle S_2}} \left(\frac{1-p}{p}\right)^{|S_1 \triangle S_2|/2} \\
        &\le \frac{1}{M_{\mathbf{S}, H}^2 \left(\frac{1-p}{p}\right)^{|\mathbf{S} |}} \cdot M^2_{\mathbf{S}, H} \left(\frac{1-p}{p}\right)^{|\mathbf{S}|} = 1
    \end{align*}

    Thus, we obtain a bound
    \begin{align}
        \frac{\mathbb{E}_{\mathbb{P} }[f_{\mathbf{S} }^2]}{\mathbb{E}_{\mathbb{P} }[f_{\mathbf{S} }]^2} &\le 1 + \frac{1}{M_{\mathbf{S}, H}^2 \left(\frac{1-p}{p}\right)^{|\mathbf{S}|}} \cdot \sum_{\substack{i(S_1, S_2)\\ \in \text{Inter}(\mathbf{S}, \mathbf{S}):\\ S_1 \cap S_2 \ne \emptyset } } \sum_{\substack{(S, S'):\\ (S, S') \\\cong i(S_1, S_2)}} \frac{M_{S_1 \triangle S_2, H}}{M_{S_1 \triangle S_2}} \left(\frac{1-p}{p}\right)^{|S_1 \triangle S_2|/2}. \label{ineq:ratio-main-bound}
    \end{align}

    Next let us examine the second term, corresponding to pairs with nonempty intersection:
    \begin{align*}
        &\quad \sum_{\substack{i(S_1, S_2)\\ \in \text{Inter}(\mathbf{S}, \mathbf{S}):\\ S_1 \cap S_2 \ne \emptyset } } \sum_{\substack{(S, S'):\\ (S, S') \\\cong i(S_1, S_2)}} \frac{M_{S_1 \triangle S_2, H}}{M_{S_1 \triangle S_2}} \left(\frac{1-p}{p}\right)^{|S_1 \triangle S_2|/2}
    \end{align*}
    \begin{align*}
        &= \sum_{\substack{i(S_1, S_2)\\ \in \text{Inter}(\mathbf{S}, \mathbf{S}):\\ S_1 \cap S_2 \ne \emptyset } } M_{S_1 \cup S_2} \cdot \frac{M_{S_1 \triangle S_2, H}}{M_{S_1 \triangle S_2}} \left(\frac{1-p}{p}\right)^{|S_1 \triangle S_2|/2}\\
        &\le \sum_{\substack{i(S_1, S_2)\\ \in \text{Inter}(\mathbf{S}, \mathbf{S}):\\ S_1 \cap S_2 \ne \emptyset } } n^{|V(S_1 \cup S_2)| - |V(S_1 \triangle S_2)|} M_{S_1 \triangle S_2, H} \left(\frac{1-p}{p}\right)^{|S_1 \triangle S_2|/2},
    \end{align*}
    which is a sum over a constant number (depending on the constant $D$) of terms. Moreover, by Proposition \ref{prop:intersection-ratio-bound}, each term is $o\left(M_{\mathbf{S} ,H}^2 \left(\frac{1-p}{p}\right)^{|\mathbf{S}|}\right)$, and thus the second term is bounded by
    \begin{align*}
        &\quad \frac{1}{M_{\mathbf{S}, H}^2 \left(\frac{1-p}{p}\right)^{|\mathbf{S}|}} \cdot \sum_{\substack{i(S_1, S_2)\\ \in \text{Inter}(\mathbf{S}, \mathbf{S}):\\ S_1 \cap S_2 \ne \emptyset } } \sum_{\substack{(S, S'):\\ (S, S') \\\cong i(S_1, S_2)}} \frac{M_{S_1 \triangle S_2, H}}{M_{S_1 \triangle S_2}} \left(\frac{1-p}{p}\right)^{|S_1 \triangle S_2|/2}\\
        &\le \frac{1}{M_{\mathbf{S}, H}^2 \left(\frac{1-p}{p}\right)^{|\mathbf{S}|}} \cdot \sum_{\substack{i(S_1, S_2)\\ \in \text{Inter}(\mathbf{S}, \mathbf{S}):\\ S_1 \cap S_2 \ne \emptyset } } n^{|V(S_1 \cup S_2)| - |V(S_1 \triangle S_2)|} M_{S_1 \triangle S_2, H} \left(\frac{1-p}{p}\right)^{|S_1 \triangle S_2|/2}\\
        &= o(1).
    \end{align*}
    Plugging it back to \eqref{ineq:ratio-main-bound}, we get the desired bound
    \begin{align*}
        \frac{\mathbb{E}_{\mathbb{P} }[f_{\mathbf{S} }^2]}{\mathbb{E}_{\mathbb{P} }[f_{\mathbf{S} }]^2} &\le 1 + o(1),
    \end{align*}
    which, as discussed at the beginning of the proof, implies the other side of the strong separation \begin{align}
        \sqrt{\Var_{\mathbb{P}}[f_{\mathbf{S}}]} = o\left(|\mathbb{E}_{\mathbb{P}}[f_{\mathbf{S}}] - \mathbb{E}_{\mathbb{Q}}[f_{\mathbf{S}}]|\right). \label{ineq:strong-separation-2}
    \end{align}

    Since both conditions \eqref{ineq:strong-separation-1} and \eqref{ineq:strong-separation-2} hold, we conclude that $f_{\mathbf{S}}$, the signed count of the star shape $\mathbf{S}$, achieves strong separation between $\mathbb{P}$ and $\mathbb{Q}$.
\end{proof}

\subsection{Proof of Key Lemmas}

We will need the following two claims about some combinatorial properties of the quantities $M_{\mathbf{S}, H}$. This will be directly useful for proving the intuition that focusing on stars is all we need, leading to Proposition \ref{prop:star-shape}.
\begin{claim}\label{claim:spanning}
    Let $\mathbf{S}$ be a shape, and $\mathbf{S}'$ be a spanning sub-shape of $\mathbf{S}$. Then, $M_{\mathbf{S}', H }  \ge  M_{\mathbf{S}, H }$ for any graph $H$.
\end{claim}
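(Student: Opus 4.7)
The plan is to reinterpret $M_{\mathbf{T}, H}$ as counting edge-respecting injections from $V(\mathbf{T})$ into $V(H)$, and then observe that passing to a spanning sub-shape only weakens the constraints that such an injection must satisfy. Concretely, I would first set up a bijection between labelled copies of a shape $\mathbf{T}$ in $H$ and injections
\begin{equation*}
\phi : V(\mathbf{T}) \hookrightarrow V(H) \quad \text{with } \{\phi(u), \phi(v)\} \in E(H) \text{ for all } \{u,v\} \in E(\mathbf{T}).
\end{equation*}
From the pair $(S, \gamma)$ defining a labelled copy, we recover $\phi := \gamma^{-1}$; conversely, given such a $\phi$, we take $S$ to be the edge-induced subgraph of $H$ on the edges $\{\phi(u), \phi(v)\}$ with $\{u,v\} \in E(\mathbf{T})$ and let $\gamma := \phi^{-1}$ on $V(S)$. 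The fact that $\mathbf{T}$ is a shape (no isolated vertices) is what ensures $V(S) = \phi(V(\mathbf{T}))$, so this correspondence is well-defined in both directions and bijective.

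With this viewpoint, the claim is almost immediate. Since $\mathbf{S}'$ is a spanning sub-shape of $\mathbf{S}$, we have $V(\mathbf{S}') = V(\mathbf{S})$ and $E(\mathbf{S}') \subseteq E(\mathbf{S})$. Hence any injection $\phi : V(\mathbf{S}) \hookrightarrow V(H)$ that is edge-respecting for $\mathbf{S}$ is also edge-respecting for $\mathbf{S}'$: the edges of $\mathbf{S}'$ that need to be preserved form a subset of those of $\mathbf{S}$. The assignment $\phi \mapsto \phi$ therefore sends labelled copies of $\mathbf{S}$ in $H$ to labelled copies of $\mathbf{S}'$ in $H$, and it is trivially injective, so $M_{\mathbf{S}, H} \le M_{\mathbf{S}', H}$.

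I do not expect any real obstacle here; the only point that requires a moment's care is the bookkeeping around the equivalence between the $(S,\gamma)$-description of a labelled copy and the $\phi$-description, and the fact that a spanning sub-shape of a shape is again a shape with the same vertex set (so no relabelling of the domain is needed when we restrict $\phi$ from $\mathbf{S}$ to $\mathbf{S}'$). Once these definitional points are nailed down, the inequality is just the observation that adding edge constraints can only decrease the number of admissible injections.
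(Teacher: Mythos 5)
Your proof is correct and is essentially the same argument as the paper's: you both fix a labelled copy of $\mathbf{S}$ in $H$ and observe that restricting attention to the edges of the spanning sub-shape $\mathbf{S}'$ (keeping the same vertex-labelling) produces a distinct labelled copy of $\mathbf{S}'$. Your reframing via edge-respecting injections $\phi = \gamma^{-1}$ is a clean way to make the bookkeeping explicit, but the underlying map is the one the paper uses.
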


\begin{claim}\label{claim:disjoint-union}
    Let $\mathbf{S}$ be a shape that is the disjoint union of two shapes $\mathbf{S}_1$ and $\mathbf{S}_2$ that are vertex-disjoint inside $\mathbf{S}$. Then, $ M_{\mathbf{S}_1, H }M_{\mathbf{S}_2, H } \ge M_{\mathbf{S}, H }$ for any graph $H$.

    If moreover $\mathbf{S}$ has a constant number of edges, then $M_{\mathbf{S}_1}M_{\mathbf{S}_2} \le (1 + o(1))M_{\mathbf{S}}$ as $n \to \infty$.
\end{claim}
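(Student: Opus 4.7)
The plan is to establish both parts of the claim by a straightforward injective correspondence between labelled copies, followed by an elementary falling-factorial computation for the asymptotic part.

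For the first inequality $M_{\mathbf{S}_1,H}\, M_{\mathbf{S}_2,H} \ge M_{\mathbf{S},H}$, I would show that every labelled copy of $\mathbf{S}$ in $H$ decomposes canonically into a vertex-disjoint pair of labelled copies of $\mathbf{S}_1$ and $\mathbf{S}_2$ in $H$. Concretely, given a labelled copy $(S, \gamma)$ of $\mathbf{S}$ in $H$, let $S_i \subseteq S$ consist of those edges whose endpoints are both mapped by $\gamma$ into $V(\mathbf{S}_i)$, and set $\gamma_i := \gamma|_{V(S_i)}$. Because $\mathbf{S}$ is the vertex-disjoint union $\mathbf{S}_1 \sqcup \mathbf{S}_2$, every edge of $\mathbf{S}$ lies entirely in one of the two parts, so $S = S_1 \cup S_2$ (as edge sets) and each $(S_i, \gamma_i)$ is a labelled copy of $\mathbf{S}_i$ in $H$; moreover $V(S_1) \cap V(S_2) = \emptyset$ in $H$. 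The map $(S, \gamma) \mapsto ((S_1, \gamma_1), (S_2, \gamma_2))$ is injective since $(S, \gamma)$ is recovered by taking the union. Hence $M_{\mathbf{S}, H}$ is bounded above by the number of vertex-disjoint pairs of copies, which is in turn bounded by the total number $M_{\mathbf{S}_1, H}\, M_{\mathbf{S}_2, H}$.

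For the asymptotic bound, set $s_i := |V(\mathbf{S}_i)|$; these are constants by hypothesis, and since the two parts are vertex-disjoint inside $\mathbf{S}$ we have $|V(\mathbf{S})| = s_1 + s_2$. Then $M_{\mathbf{S}} = n_{(s_1 + s_2)}$ and $M_{\mathbf{S}_1}\, M_{\mathbf{S}_2} = n_{(s_1)}\, n_{(s_2)}$. For any fixed $k$ the standard expansion $n_{(k)} = n^k(1 - O(1/n))$ holds, so both quantities equal $n^{s_1 + s_2}\bigl(1 + O(1/n)\bigr)$, giving
\[
\frac{M_{\mathbf{S}_1}\, M_{\mathbf{S}_2}}{M_{\mathbf{S}}} = 1 + O(1/n) = 1 + o(1),
\]
which is the desired inequality.

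There is no real obstacle in either step; the only point that requires a small sanity check is that the ``shape'' convention (edge-induced, no isolated vertices) plus the vertex-disjointness hypothesis genuinely forces $|V(\mathbf{S})| = |V(\mathbf{S}_1)| + |V(\mathbf{S}_2)|$, which is exactly what makes the falling-factorial comparison clean. Both steps together yield Claim \ref{claim:disjoint-union}.
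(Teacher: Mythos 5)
Your proof is correct and follows essentially the same route as the paper's: for the first inequality you decompose each labelled copy $(S,\gamma)$ of $\mathbf{S}$ into the vertex-disjoint pair $(\gamma^{-1}(\mathbf{S}_1),\gamma_1)$, $(\gamma^{-1}(\mathbf{S}_2),\gamma_2)$ and observe injectivity of this map, and for the second you compare $n_{(s_1)}n_{(s_2)}$ with $n_{(s_1+s_2)}$ via the common bound $n^{s_1+s_2}$. The only cosmetic difference is that you make explicit the identity $|V(\mathbf{S})|=s_1+s_2$, which the paper uses implicitly; otherwise the arguments match.
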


\begin{proof}[Proof of \ref{claim:spanning}]
    Let $S \subseteq H$ together with a labelling $\gamma: V(S) \to V(\mathbf{S})$ be a copy of $\mathbf{S}$ in $H$. Since $\mathbf{S}'$ is a spanning sub-shape of $\mathbf{S}$, there is a spanning subgraph $S' \subseteq S$ that is isomorphic to $\mathbf{S}'$ and moreover inherits $\gamma$ as the isomorphism mapping. It is not hard to see from the argument above for every copy of $\mathbf{S}$, we find a distinct copy of $\mathbf{S}'$, and $M_{\mathbf{S}', H } \ge M_{\mathbf{S}, H}$.
\end{proof}

\begin{proof}[Proof of \ref{claim:disjoint-union}]
    Let $S \subseteq H$ with a labelling $\gamma: V(S) \to V(\mathbf{S})$ be a copy of $\mathbf{S}$ inside $H$. As $\mathbf{S}$ is the vertex-disjoint union of two shapes $\mathbf{S}_1$ and $\mathbf{S}_2$, $S$ consists of two vertex-disjoint subgraphs $\gamma^{-1}(\mathbf{S}_1)$ and $\gamma^{-1}(\mathbf{S}_2)$, and $\gamma$ induces two labellings $\gamma_1: \gamma^{-1}(\mathbf{S}_1) \to \mathbf{S}_1$ and $\gamma_2: \gamma^{-1}(\mathbf{S}_2) \to \mathbf{S}_2$. Thus, $(\gamma^{-1}(\mathbf{S}_i), \gamma_i)$ is a copy of $\mathbf{S}_i$ in $H$ for $i \in \{1,2\}$. It is not hard to see that for every copy of $\mathbf{S}$, we find a distinct pair of copies of $\mathbf{S}_1$ and $\mathbf{S}_2$, and $ M_{\mathbf{S}_1, H }M_{\mathbf{S}_2, H } \ge M_{\mathbf{S}, H }$.
    
    On the other hand, if $\mathbf{S}$ has only a constant number of edges, then in the complete graph $K_n$,
    \begin{align*}
        M_{\mathbf{S}_1} M_{\mathbf{S}_2} = (n)_{(|V(\mathbf{S}_1)|)} (n)_{(|V(\mathbf{S}_2)|)} \le n^{|V(\mathbf{S}_1)|+|V(\mathbf{S}_2)|} \le (1 + o(1)) (n)_{|V(\mathbf{S})|} = (1 + o(1)) M_{\mathbf{S}}.
    \end{align*}
\end{proof}

Immediately, the two claims above yield the following two simple corollaries, which will be used to prove Proposition \ref{prop:star-shape}.
\begin{corollary}\label{cor:spanning}

    Let $\mathbf{S}$ be a shape, and $\mathbf{S}'$ be a spanning sub-shape of $\mathbf{S}$. Then, $\frac{M_{\mathbf{S}', H }^2}{M_{\mathbf{S}'} }  \ge  \frac{M_{\mathbf{S}, H }^2}{M_{\mathbf{S}} } $.
\end{corollary}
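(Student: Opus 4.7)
The plan is to derive Corollary \ref{cor:spanning} as an immediate consequence of Claim \ref{claim:spanning}, combined with the simple observation that a spanning sub-shape shares its vertex set with the ambient shape.

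First, I would invoke Claim \ref{claim:spanning} directly to conclude that $M_{\mathbf{S}', H} \ge M_{\mathbf{S}, H}$. Since both sides are nonnegative, squaring preserves the inequality, giving $M_{\mathbf{S}', H}^2 \ge M_{\mathbf{S}, H}^2$.

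Next, I would use the defining property of a spanning sub-shape, namely $V(\mathbf{S}') = V(\mathbf{S})$. Recalling the definition $M_{\mathbf{T}} = M_{\mathbf{T}, K_n} = n_{(|V(\mathbf{T})|)}$ for any shape $\mathbf{T}$, this immediately yields
\[
M_{\mathbf{S}'} = n_{(|V(\mathbf{S}')|)} = n_{(|V(\mathbf{S})|)} = M_{\mathbf{S}}.
\]
Dividing the two inequalities then gives $\frac{M_{\mathbf{S}', H}^2}{M_{\mathbf{S}'}} \ge \frac{M_{\mathbf{S}, H}^2}{M_{\mathbf{S}}}$, which is the claim.

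There is no real obstacle here: the entire content is packaged into Claim \ref{claim:spanning}, and the denominators cancel exactly because spanning sub-shapes have equal vertex count. The corollary exists essentially so that later arguments can phrase the ``rescaled advantage'' monotonicity in a form directly comparable to the quantity $\frac{M_{\mathbf{S}, H}^2}{M_{\mathbf{S}}}\bigl(\tfrac{1-p}{p}\bigr)^{|\mathbf{S}|}$ appearing in Proposition \ref{prop:star-shape}, so I would keep the write-up a one-line proof.
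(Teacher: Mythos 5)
Your proof is correct and matches the paper's intent exactly: the paper simply states that Corollary \ref{cor:spanning} follows immediately from Claim \ref{claim:spanning}, and the mechanism you spell out — squaring $M_{\mathbf{S}', H} \ge M_{\mathbf{S}, H}$ and observing $M_{\mathbf{S}'} = M_{\mathbf{S}}$ because a spanning sub-shape has the same vertex set — is precisely the implicit argument.
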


\begin{corollary}\label{cor:disjoint-union}
    Let $\mathbf{S}$ be a shape with a constant number of edges that is the disjoint union of two shapes $\mathbf{S}_1$ and $\mathbf{S}_2$ that are vertex-disjoint inside $\mathbf{S}$. Then, $ \left(\frac{M_{
\mathbf{S}_1, H }^2}{M_{\mathbf{S}_1} } \left(\frac{1-p}{p}\right)^{|\mathbf{S}_1|}\right) \cdot \left(\frac{M_{\mathbf{S}_2, H }^2}{M_{\mathbf{S}_2} } \left(\frac{1-p}{p}\right)^{|\mathbf{S}_2|}\right) \ge (1 - o(1)) \frac{M_{\mathbf{S}, H }^2}{M_{\mathbf{S}} } \left(\frac{1-p}{p}\right)^{|\mathbf{S}|}$.
\end{corollary}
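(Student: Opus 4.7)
The proof is essentially a direct multiplication of the two bounds already provided by Claim \ref{claim:disjoint-union}, together with the trivial observation that a disjoint union is in particular edge-disjoint. Here is the plan.

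First, I would record three elementary facts. From the first half of Claim \ref{claim:disjoint-union}, I get $M_{\mathbf{S}_1, H}\, M_{\mathbf{S}_2, H} \ge M_{\mathbf{S}, H}$, and squaring both sides yields $M_{\mathbf{S}_1, H}^2 \, M_{\mathbf{S}_2, H}^2 \ge M_{\mathbf{S}, H}^2$. From the second half, since $\mathbf{S}$ has a constant number of edges (hence constantly many vertices), $M_{\mathbf{S}_1}\, M_{\mathbf{S}_2} \le (1+o(1))\, M_{\mathbf{S}}$, so
\[
\frac{1}{M_{\mathbf{S}_1}\, M_{\mathbf{S}_2}} \;\ge\; \frac{1-o(1)}{M_{\mathbf{S}}}.
\]
Finally, because $\mathbf{S}$ is the vertex-disjoint (and therefore edge-disjoint) union of $\mathbf{S}_1$ and $\mathbf{S}_2$, we have $|\mathbf{S}| = |\mathbf{S}_1| + |\mathbf{S}_2|$, so the $\left(\frac{1-p}{p}\right)$ factors multiply cleanly: $\left(\frac{1-p}{p}\right)^{|\mathbf{S}_1|}\left(\frac{1-p}{p}\right)^{|\mathbf{S}_2|} = \left(\frac{1-p}{p}\right)^{|\mathbf{S}|}$.

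Then I would simply multiply the two bracketed quantities on the left-hand side and apply the three facts in sequence to obtain
\[
\left(\frac{M_{\mathbf{S}_1, H}^2}{M_{\mathbf{S}_1}} \left(\tfrac{1-p}{p}\right)^{|\mathbf{S}_1|}\right)\!\left(\frac{M_{\mathbf{S}_2, H}^2}{M_{\mathbf{S}_2}} \left(\tfrac{1-p}{p}\right)^{|\mathbf{S}_2|}\right) = \frac{M_{\mathbf{S}_1, H}^2\, M_{\mathbf{S}_2, H}^2}{M_{\mathbf{S}_1}\, M_{\mathbf{S}_2}}\left(\tfrac{1-p}{p}\right)^{|\mathbf{S}|} \ge (1-o(1))\,\frac{M_{\mathbf{S}, H}^2}{M_{\mathbf{S}}}\left(\tfrac{1-p}{p}\right)^{|\mathbf{S}|},
\]
which is exactly the desired inequality. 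There is no real obstacle here; the entire content of the corollary is packaged into Claim \ref{claim:disjoint-union}, and the corollary merely repackages those two inequalities together with the additive behavior of edge counts under disjoint union.
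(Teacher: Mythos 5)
Your proof is correct and matches what the paper intends: the paper states that Corollary \ref{cor:disjoint-union} follows ``immediately'' from Claim \ref{claim:disjoint-union}, and your write-up simply spells out that immediate deduction — squaring the first inequality, inverting the second, and noting that edge counts add under vertex-disjoint (hence edge-disjoint) union so the $\left(\tfrac{1-p}{p}\right)$ factors combine.
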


\begin{proof}[Proof of Proposition~\ref{prop:star-shape}]
    Since $\left(\text{Adv}^{\le D} \right)^2 = \sum_{\mathbf{S}: |\mathbf{S}| \le D } \frac{M_{\mathbf{S} , H}^2}{M_{\mathbf{S}} \cdot |\text{Aut}(\mathbf{S})|} \left(\frac{1-p}{p}\right)^{|\mathbf{S} |} \to \infty $ by Proposition \ref{prop:Adv}, and there are at most a constant number of shapes depending on the constant $D$, each with an automorphism group of constant size, we have
    \begin{align*}
        \max_{\mathbf{S}: |\mathbf{S}| \le D } \frac{M_{\mathbf{S} , H}^2}{M_{\mathbf{S}}} \left(\frac{1-p}{p}\right)^{|\mathbf{S} |} \to \infty
    \end{align*}
    
    Now let $\mathbf{S}$ be a shape with at most $D$ edges that maximizes $\frac{M_{\mathbf{S}, H }^2}{M_{\mathbf{S} }} \left(\frac{1-p}{p}\right)^{|\mathbf{S}|}$. With Corollary \ref{cor:spanning} and Corollary \ref{cor:disjoint-union} in hand, we will show a properly chosen sub-shape of $
\mathbf{S}$ satisfies the condition of the corollary. 
    
    If $\mathbf{S}$ is already star graph, then we are done. If $\mathbf{S}$ is not a connected shape, then we may recurse on one of the connected components $\mathbf{S}'$ of $\mathbf{S}$ while ensuring $\frac{M_{\mathbf{S}', H }^2}{M_{\mathbf{S}' }} \left(\frac{1-p}{p}\right)^{|\mathbf{S}'|} \to \infty$ using Corollary \ref{cor:disjoint-union}. So now let us assume $\mathbf{S}$ is connected. Let $\mathbf{T}$ be a spanning tree of the shape $\mathbf{S}$. By Corollary \ref{cor:spanning}, 
    \begin{align*}
        \frac{M_{\mathbf{T}, H }^2}{M_{\mathbf{T} }} \left(\frac{1-p}{p}\right)^{|\mathbf{T}|} &\ge  \frac{M_{\mathbf{S}, H }^2}{M_{\mathbf{S} }} \left(\frac{1-p}{p}\right)^{|\mathbf{T}|}\\
        &\gtrsim \frac{M_{\mathbf{S}, H }^2}{M_{\mathbf{S} }} \left(\frac{1-p}{p}\right)^{|\mathbf{S}|} \to \infty,
    \end{align*}
    where in the last inequality we use that $\left(\frac{1-p}{p}\right)^{O(1)} = O(1)$ as $p = \Omega(1)$. If the diameter of $\mathbf{T}$ is at least $3$, let us consider a path $a-b-c-d$ of length $3$ in $\mathbf{T}$. Note that after the edge $\{b, c\}$ is deleted from $\mathbf{T}$, what remains is still a spanning sub-shape $\mathbf{T} - \{b,c\}$, and we may recurse on it by \ref{claim:spanning}.

    Repeating the process above, we will end up with a shape $\mathbf{S}$ that satisfies $\frac{M_{\mathbf{S}, H }^2}{M_{\mathbf{S} }} \left(\frac{1-p}{p}\right)^{|\mathbf{S}|} \to \infty$, and moreover is either a star graph or a tree of diameter at most $2$. Note that star graphs are precisely trees with diameters at most $2$. This finishes the proof that \[\max_{\substack{\mathbf{S} \cong \mathbf{K}_{1,t}:\\ t \le D} } \frac{M_{\mathbf{S}, H }^2}{M_{\mathbf{S} }} \left(\frac{1-p}{p}\right)^{|\mathbf{S} |} \to \infty.\]
\end{proof}

\begin{proof}[Proof of Proposition \ref{prop:intersection-ratio-bound}]
    Let $d_i = \deg_{H}(i)$ for $i \in V(H)$. By Lemma \ref{lemma:star-shape-copy},
    \begin{align*}
        M_{\mathbf{S},H} = \sum_{i\in V(H)}(d_{i})_{(t)}.
    \end{align*}

    First, let us notice that if $S_1 \triangle S_2 = \emptyset$, the desired bound easily follows from the assumptions, as in this situation the ratio of interest is bounded by
    \begin{align*}
        \frac{n^{|V(S_1 \cup S_2)| } M_{\emptyset, H} }{M_{\mathbf{S} ,H}^2 \left(\frac{1-p}{p}\right)^{|\mathbf{S}|}} \le (1 + o(1)) \frac{M_{\mathbf{S} }}{M_{\mathbf{S}, H}^2 \left(\frac{1-p}{p}\right)^{|\mathbf{S}|} } = o(1).
    \end{align*}
    So from now on let us assume $S_1 \triangle S_2 \ne \emptyset$. If $S_1, S_2 \cong \mathbf{K}_{1,t}$ have non-empty intersection and $S_1 \triangle S_2 \ne \emptyset$, there are two cases.
        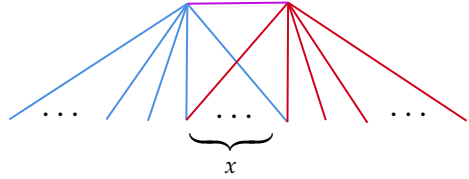
\begin{figure}[h]
        \centering

\tikzset{every picture/.style={line width=0.75pt}} 

\begin{tikzpicture}[x=0.75pt,y=0.75pt,yscale=-1,xscale=1]

\draw [color={rgb, 255:red, 74; green, 144; blue, 226 }  ,draw opacity=1 ]   (180.5,50.75) -- (90.67,109.17) ;
\draw [color={rgb, 255:red, 74; green, 144; blue, 226 }  ,draw opacity=1 ]   (180.5,50.75) -- (180,109.5) ;
\draw [color={rgb, 255:red, 74; green, 144; blue, 226 }  ,draw opacity=1 ]   (180.5,50.75) -- (231,110.5) ;
\draw [color={rgb, 255:red, 208; green, 2; blue, 27 }  ,draw opacity=1 ]   (180,109.5) -- (231.33,50.17) ;
\draw [color={rgb, 255:red, 208; green, 2; blue, 27 }  ,draw opacity=1 ]   (231.33,50.17) -- (231,110.5) ;
\draw [color={rgb, 255:red, 208; green, 2; blue, 27 }  ,draw opacity=1 ]   (231.33,50.17) -- (271.33,110.83) ;
\draw [color={rgb, 255:red, 208; green, 2; blue, 27 }  ,draw opacity=1 ]   (231.33,50.17) -- (321.33,109.83) ;
\draw [color={rgb, 255:red, 189; green, 16; blue, 224 }  ,draw opacity=1 ]   (180.5,50.75) -- (231.33,50.17) ;
\draw [color={rgb, 255:red, 74; green, 144; blue, 226 }  ,draw opacity=1 ]   (180.5,50.75) -- (139.67,109.17) ;
\draw [color={rgb, 255:red, 74; green, 144; blue, 226 }  ,draw opacity=1 ]   (180.5,50.75) -- (160.67,109.83) ;
\draw [color={rgb, 255:red, 208; green, 2; blue, 27 }  ,draw opacity=1 ]   (231.33,50.17) -- (250.33,109.5) ;

\draw (105.37,104.25) node [anchor=north west][inner sep=0.75pt]    {$\dotsc $};
\draw (192.92,105.1) node [anchor=north west][inner sep=0.75pt]    {$\dotsc $};
\draw (280.75,103.95) node [anchor=north west][inner sep=0.75pt]    {$\dotsc $};
\draw (179,113.47) node [anchor=north west][inner sep=0.75pt]    {$\underbrace{\ \ \ \ \ \ \ \ \ \ \ }_{x}$};

\end{tikzpicture}
        \caption{Case $1$ for the intersecting pattern $i(S_1, S_2)$ of two $t$-stars with non-empty intersection. The edges contained solely in $S_1$, solely in $S_2$, and in the intersection of $S_1$ and $S_2$, are marked by blue, red, and purple respectively.}
        \label{fig:intersecting-shape-distinct-root}
    \end{figure}
    
    Case 1: Under this case, $S_1$ and $S_2$ do not share the same root. Note that $|V(S_1\cup S_2)| = |V(S_1 \triangle S_2)|$ and $|S_1 \triangle S_2| = 2|\mathbf{S}| - 2$ in this case. Consider the shapes $S_1' = S_1 \setminus S_2$, and $S_2' = S_2[V(S_2) \setminus V(S_1')]$, as illustrated in Figure \ref{fig:vertex-partitioning-of-fig1}.

    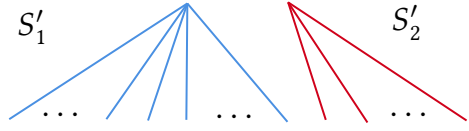
\begin{figure}[h]
        \centering

\tikzset{every picture/.style={line width=0.75pt}} 

\begin{tikzpicture}[x=0.75pt,y=0.75pt,yscale=-1,xscale=1]

\draw [color={rgb, 255:red, 74; green, 144; blue, 226 }  ,draw opacity=1 ]   (180.5,50.75) -- (90.67,109.17) ;
\draw [color={rgb, 255:red, 74; green, 144; blue, 226 }  ,draw opacity=1 ]   (180.5,50.75) -- (180,109.5) ;
\draw [color={rgb, 255:red, 74; green, 144; blue, 226 }  ,draw opacity=1 ]   (180.5,50.75) -- (231,110.5) ;
\draw [color={rgb, 255:red, 208; green, 2; blue, 27 }  ,draw opacity=1 ]   (231.33,50.17) -- (271.33,110.83) ;
\draw [color={rgb, 255:red, 208; green, 2; blue, 27 }  ,draw opacity=1 ]   (231.33,50.17) -- (321.33,109.83) ;
\draw [color={rgb, 255:red, 74; green, 144; blue, 226 }  ,draw opacity=1 ]   (180.5,50.75) -- (139.67,109.17) ;
\draw [color={rgb, 255:red, 74; green, 144; blue, 226 }  ,draw opacity=1 ]   (180.5,50.75) -- (160.67,109.83) ;
\draw [color={rgb, 255:red, 208; green, 2; blue, 27 }  ,draw opacity=1 ]   (231.33,50.17) -- (250.33,109.5) ;

\draw (105.37,104.25) node [anchor=north west][inner sep=0.75pt]    {$\dotsc $};
\draw (192.92,105.1) node [anchor=north west][inner sep=0.75pt]    {$\dotsc $};
\draw (280.75,103.95) node [anchor=north west][inner sep=0.75pt]    {$\dotsc $};
\draw (93.2,52.4) node [anchor=north west][inner sep=0.75pt]    {$S_{1} '$};
\draw (282,50) node [anchor=north west][inner sep=0.75pt]    {$S_{2} '$};

\end{tikzpicture}
        \caption{Vertex disjoint $S_1'$ and $S_2'$ whose union is a spanning sub-shape of $S_1 \triangle S_2$ in Figure \ref{fig:intersecting-shape-distinct-root}}
        \label{fig:vertex-partitioning-of-fig1}
    \end{figure}

    Notice that $S_1'$ and $S_2'$ are vertex disjoint, and $S_1' \sqcup S_2'$ is a spanning sub-shape of $S_1 \triangle S_2$. By Claim \ref{claim:spanning} and Claim \ref{claim:disjoint-union}, we have
    \begin{align*}
        M_{S_1 \triangle S_2, H} \le M_{S_1' \sqcup S_2', H} \le M_{S_1', H} M_{S_2', H}.
    \end{align*}
    
    Then, using $|V(S_1\cup S_2)| = |V(S_1 \triangle S_2)|$, $|S_1 \triangle S_2| = 2|\mathbf{S}| - 2$, and $M_{S_1 \triangle S_2, H} \le M_{S_1', H} M_{S_2', H}$, 
    \begin{align}
        &\quad \frac{n^{|V(S_1 \cup S_2)| - |V(S_1 \triangle S_2)|} M_{S_1 \triangle S_2, H} \left(\frac{1-p}{p}\right)^{|S_1 \triangle S_2|/2} }{M_{\mathbf{S} ,H}^2 \left(\frac{1-p}{p}\right)^{|\mathbf{S}| }}\\
        &\le \frac{ M_{S_1', H}M_{S_2', H}  }{M_{\mathbf{S} ,H}^2 \left(\frac{1-p}{p}\right)}\\
        &\le (1+o(1))\frac{\sqrt{\frac{M_{S_1', H}^2}{M_{S_1'}} \left(\frac{1-p}{p}\right)^{|S_1'|} \frac{M_{S_2', H}^2}{M_{S_2'}} \left(\frac{1-p}{p}\right)^{|S_2'|} }}{\frac{M_{\mathbf{S}, H}^2}{M_{\mathbf{S}}} \left(\frac{1-p}{p}\right)^{|\mathbf{S}|}} \cdot \frac{\left(\frac{1-p}{p}\right)^{|\mathbf{S}| - (|S_1'| + |S_2'|)/2 - 1}}{n^{|V(\mathbf{S})| - (|V(S_1')| + |V(S_2')|)/2}}, \label{ineq:ratio-bound-case-1}
    \end{align}
    where the last line uses $M_{S} = (1 - o(1))n^{|V(S)|}$ for constant sized shapes $S$.

    Since $\mathbf{S}$ is the star shape that maximizes $\frac{M_{\mathbf{S}, H}^2}{M_{\mathbf{S}}}\left(\frac{1-p}{p}\right)^{|\mathbf{S}|}$ among star shapes with at most $D$ edges, and $S_1', S_2'$ are also star shapes with at most $|\mathbf{S}|$ number of edges, we have
    \begin{align*}
        \frac{\sqrt{\frac{M_{S_1', H}^2}{M_{S_1'}} \left(\frac{1-p}{p}\right)^{|S_1'|} \frac{M_{S_2', H}^2}{M_{S_2'}} \left(\frac{1-p}{p}\right)^{|S_2'|} }}{\frac{M_{\mathbf{S}, H}^2}{M_{\mathbf{S}}} \left(\frac{1-p}{p}\right)^{|\mathbf{S}|}} \le 1.
    \end{align*}
    Also observe that $|\mathbf{S}| - (|S_1'| + |S_2'|)/2 - 1 \ge 0$ and $|V(\mathbf{S})| - (|V(S_1')| + |V(S_2')|)/2 \ge 1$, we have
    \begin{align*}
        \frac{\left(\frac{1-p}{p}\right)^{|\mathbf{S}| - (|S_1'| + |S_2'|)/2 - 1}}{n^{|V(\mathbf{S})| - (|V(S_1')| + |V(S_2')|)/2}} \lesssim \frac{1}{n},
    \end{align*}
    where we use the fact that $\frac{1-p}{p} = O(1)$ as $p = \Omega(1)$, and $|\mathbf{S}| \le D = O(1)$. Plugging these bounds back to \eqref{ineq:ratio-bound-case-1}, we get
    \begin{align*}
        &\quad \frac{n^{|V(S_1 \cup S_2)| - |V(S_1 \triangle S_2)|} M_{S_1 \triangle S_2, H} \left(\frac{1-p}{p}\right)^{|S_1 \triangle S_2|/2} }{M_{\mathbf{S} ,H}^2 \left(\frac{1-p}{p}\right)^{|\mathbf{S}| }}
    \end{align*}
    \begin{align*}
        &\le (1+o(1))\frac{\sqrt{\frac{M_{S_1', H}^2}{M_{S_1'}} \left(\frac{1-p}{p}\right)^{|S_1'|} \frac{M_{S_2', H}^2}{M_{S_2'}} \left(\frac{1-p}{p}\right)^{|S_2'|} }}{\frac{M_{\mathbf{S}, H}^2}{M_{\mathbf{S}}} \left(\frac{1-p}{p}\right)^{|\mathbf{S}|}} \cdot \frac{\left(\frac{1-p}{p}\right)^{|\mathbf{S}| - (|S_1'| + |S_2'|)/2 - 1}}{n^{|V(\mathbf{S})| - (|V(S_1')| + |V(S_2')|)/2}}\\
        &\lesssim \frac{1}{n}\\
        &= o(1).
    \end{align*}

        \begin{figure}[h]
        \centering
\tikzset{every picture/.style={line width=0.75pt}} 

\begin{tikzpicture}[x=0.75pt,y=0.75pt,yscale=-1,xscale=1]

\draw [color={rgb, 255:red, 74; green, 144; blue, 226 }  ,draw opacity=1 ]   (170.67,40.17) -- (50.33,119.17) ;
\draw [color={rgb, 255:red, 74; green, 144; blue, 226 }  ,draw opacity=1 ]   (170.67,40.17) -- (120.6,119) ;
\draw [color={rgb, 255:red, 144; green, 19; blue, 254 }  ,draw opacity=1 ]   (170.67,40.17) -- (150.6,119.8) ;
\draw [color={rgb, 255:red, 144; green, 19; blue, 254 }  ,draw opacity=1 ]   (170.67,40.17) -- (190.6,119.8) ;
\draw [color={rgb, 255:red, 74; green, 144; blue, 226 }  ,draw opacity=1 ]   (170.67,40.17) -- (90.2,119) ;
\draw [color={rgb, 255:red, 208; green, 2; blue, 27 }  ,draw opacity=1 ]   (170.67,40.17) -- (220.6,119.8) ;
\draw [color={rgb, 255:red, 208; green, 2; blue, 27 }  ,draw opacity=1 ]   (170.67,40.17) -- (249.4,119) ;
\draw [color={rgb, 255:red, 208; green, 2; blue, 27 }  ,draw opacity=1 ]   (170.67,40.17) -- (290.6,120.6) ;

\draw (61.73,113.2) node [anchor=north west][inner sep=0.75pt]    {$\dotsc $};
\draw (157.13,113.67) node [anchor=north west][inner sep=0.75pt]    {$\dotsc $};
\draw (255.4,113.07) node [anchor=north west][inner sep=0.75pt]    {$\dotsc $};
\draw (149,124.4) node [anchor=north west][inner sep=0.75pt]    {$\underbrace{\ \ \ \ \ \ \ \ \ }_{x}$};

\end{tikzpicture}
        \caption{Case $2$ for the intersecting pattern $i(S_1, S_2)$ of two $t$-stars with non-empty intersection. The edges contained solely in $S_1$, solely in $S_2$, and in the intersection of $S_1$ and $S_2$, are marked by blue, red, and purple respectively.}
        \label{fig:intersecting-shape-sharing-root}
    \end{figure}
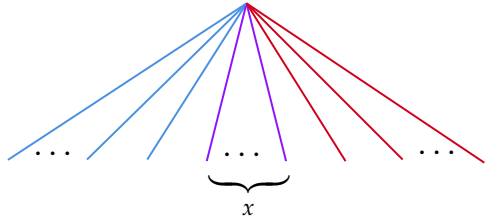
    
    Case 2: Under this case, star-shaped copies $S_1$ and $S_2$ share the same root vertex, as shown in the Figure \ref{fig:intersecting-shape-sharing-root}. Let $ x := |V(S_1 \cup S_2)| - |V(S_1 \triangle S_2)|$. Note that $0 < x < t$. Then, $S_1 \triangle S_2 \cong K_{1, 2t - 2x}$, and by Lemma \ref{lemma:star-shape-copy} we have
    \begin{align*}
        M_{S_1\triangle S_2,H} = M_{K_{1,2t-2x},H} = \sum_{i\in V(H)}(d_i)_{(2t-2x)}.
    \end{align*}

    If $x \ge t/2$, then $S_1 \triangle S_2 \cong K_{1,2t-2x}$ is a star graph with at most $t$ edges, and therefore by the assumption that $\mathbf{S} = \mathbf{K}_{1,t}$ is the star shape that maximizes $\frac{M_{\mathbf{S}, H}^2}{M_{\mathbf{S}}} \left(\frac{1-p}{p}\right)^{|\mathbf{S}|}$ among star shapes with at most $t$ edges, we have
    \begin{align*}
        \frac{M_{S_1 \triangle S_2, H}^2}{M_{S_1 \triangle S_2}} \left(\frac{1-p}{p}\right)^{|S_1 \triangle S_2|} &\le \frac{M_{\mathbf{S}, H}^2}{M_{\mathbf{S}}} \left(\frac{1-p}{p}\right)^{|\mathbf{S}|},\\
        M_{S_1 \triangle S_2, H} &\le M_{\mathbf{S}, H} \sqrt{\frac{M_{S_1 \triangle S_2}}{M_{\mathbf{S}}} \left(\frac{1-p}{p}\right)^{|\mathbf{S}| - |S_1 \triangle S_2|}}\\
        &\le (1+o(1))\cdot n^{(|V(S_1 \triangle S_2)| - |V(\mathbf{S} )|)/2} \left(\frac{1-p}{p}\right)^{(|\mathbf{S}| - |S_1 \triangle S_2| )/2} M_{\mathbf{S}, H}\\
        &= (1+o(1))\cdot n^{(t - 2x)/2} \left(\frac{1-p}{p}\right)^{(2x-t)/2} M_{\mathbf{S}, H},
    \end{align*}
    and
    \begin{align*}
        \frac{n^{|V(S_1 \cup S_2)| - |V(S_1 \triangle S_2)|} M_{S_1 \triangle S_2, H} \left(\frac{1-p}{p}\right)^{|S_1 \triangle S_2|/2} }{M_{\mathbf{S} ,H}^2 \left(\frac{1-p}{p}\right)^{|\mathbf{S}| }}
    \end{align*}
    \begin{align*}
        &\le (1 + o(1))\cdot \frac{\left(n \frac{p}{1-p}\right)^x \left(n \frac{p}{1-p}\right)^{t/2 - x} }{
        M_{\mathbf{S}, H}
        }\\ &\leq (1 + o(1))\cdot  \frac{\left(n \frac{p}{1-p}\right)^{t/2} }{
        M_{\mathbf{S}, H}}\\
        &\le o(1),
    \end{align*}
    as
    $\frac{M_{\mathbf{S}, H }^2}{\left(n \frac{p}{1-p}\right)^{t}} \ge (1- o(1)) \cdot n\cdot  \frac{M_{\mathbf{S}, H }^2}{M_{\mathbf{S}}} \left(\frac{1-p}{p}\right)^{|\mathbf{S}|} \to \infty$ by our assumption.

    Therefore, we now assume $x < t/2$. Since \[\sum_{i \in V(H)} (d_i)_{(2t-2x)} \ge \sum_{i\in V(H)} (d_i)_{(t)} = M_{\mathbf{S}, H} \ge \omega\left(n^{(1+t)/2} (\frac{p}{1-p})^{t/2}\right) = \omega(n),\] we may apply Lemma~\ref{lemma: falling-factorial} and calculate 
    \begin{align*}
        \frac{n^{|V(S_1 \cup S_2)| - |V(S_1 \triangle S_2)|} M_{S_1 \triangle S_2, H} \left(\frac{1-p}{p}\right)^{|S_1 \triangle S_2|/2} }{M_{\mathbf{S} ,H}^2 \left(\frac{1-p}{p}\right)^{|\mathbf{S}| }} &= \frac{\left(n \frac{p}{1-p}\right)^x\sum_{i\in V(H)}(d_{i})_{(2t-2x)} }{
        \left( \sum_{i\in V(H)}(d_{i})_{(t)} \right)^2
        }\\ &\leq (1 + o(1)) \cdot \frac{\left(n \frac{p}{1-p}\right)^x\sum_{i\in V(H)}d_{i}^{2t-2x} }{
        \Big( \sum_{i\in V(H)}d_{i}^{t} \Big)^2}
    \end{align*}
    Let $\Delta = \max_{i \in V(H)} d_i$. We insert Lemma~\ref{lemma: sum-di-ub} and obtain
    \begin{align} \label{ineq:Delta-bound}
        \frac{\left(n \frac{p}{1-p}\right)^x\sum_{i\in V(H)}d_{i}^{2t-2x} }{
        \Big( \sum_{i\in V(H)}d_{i}^{t} \Big)^2} \leq \frac{\left(n \frac{p}{1-p}\right)^x\Delta^{t-2x}}{\sum_{i\in V(H)}d_i^t}.
    \end{align}
    Recalling the assumption that the star-shape $\mathbf{S} = \mathbf{K}_{1,t}$ satisfy $\frac{M_{\mathbf{S}, H}^2}{M_{\mathbf{S}}} \left(\frac{1-p}{p}\right)^{|\mathbf{S}|} \to \infty$, and using Lemma \ref{lemma: falling-factorial}, we have
    \begin{align*}
        \omega(1) \le \frac{M_{\mathbf{S},H}^2}{M_{\mathbf{S}}} \left( \frac{1-p}{p}\right)^{|\mathbf{S}|} = \frac{\big(\sum_{i\in V(H)} (d_i)_{(t)}\big)^2}{(n)_{(t+1)}\left(\frac{p}{1-p}\right)^{t}} \le (1 + o(1)) \cdot \frac{\big(\sum_{i\in V(H)} d_i^t\big)^2}{ n^{t+1}\left( \frac{p}{1-p}\right)^{t}}.
    \end{align*}
    Thus, we obtain
    \begin{align*}
        \frac{1}{\sum_{i\in V(H)} d_i^t} \le o\left(\frac{1}{n^{(t+1)/2} \left(\frac{p}{1-p}\right)^{t/2}}\right),
    \end{align*}
    and plugging it back to \eqref{ineq:Delta-bound}, we get
    \begin{align} \label{ineq:ratio-bound-1}
        \frac{\left(n \frac{p}{1-p}\right)^x\sum_{i\in V(H)}d_{i}^{2t-2x} }{
        \Big( \sum_{i\in V(H)}d_{i}^{t} \Big)^2} \leq \frac{\left(n \frac{p}{1-p}\right)^x\Delta^{t-2x}}{\sum_{i\in V(H)}d_i^t} \le o\left(\frac{\Delta^{t-2x}}{n^{\frac{t+1}{2} - x} \left(\frac{p}{1-p}\right)^{\frac{t}{2} - x} } \right).
    \end{align}
    On the other hand, notice that $\sum_{i \in V(H)} d_i^t \ge \Delta^t$, and therefore
    \begin{align} \label{ineq:ratio-bound-2}
        \frac{\left(n \frac{p}{1-p}\right)^x\sum_{i\in V(H)}d_{i}^{2t-2x} }{
        \Big( \sum_{i\in V(H)}d_{i}^{t} \Big)^2} \leq \frac{\left(n \frac{p}{1-p}\right)^x\Delta^{t-2x}}{\sum_{i\in V(H)}d_i^t} \le \frac{\left(n \frac{p}{1-p}\right)^x}{\Delta^{2x}}.
    \end{align}
    Combining \eqref{ineq:ratio-bound-1} and \eqref{ineq:ratio-bound-2},
    \begin{align} \label{ineq:ratio-bound-combined}
        \frac{\left(n \frac{p}{1-p}\right)^x\sum_{i\in V(H)}d_{i}^{2t-2x} }{
        \Big( \sum_{i\in V(H)}d_{i}^{t} \Big)^2} \leq \min\left\{ o\left(\frac{\Delta^{t-2x}}{n^{\frac{t+1}{2} - x} \left(\frac{p}{1-p}\right)^{\frac{t}{2} - x} } \right), \frac{\left(n \frac{p}{1-p}\right)^x}{\Delta^{2x}}\right\}
    \end{align}
    If $\Delta \geq n^{1/2 + 1/(2t)} \left(\frac{p}{1-p}\right)^{1/2}$, then
    \begin{align*}
        \frac{\left(n \frac{p}{1-p}\right)^x}{\Delta^{2x}} &\le \frac{\left(n \frac{p}{1-p}\right)^x}{n^{x + x/t} \left(\frac{p}{1-p}\right)^x} = \frac{1}{n^{x/t}} = o(1).
    \end{align*}
    Otherwise, if $\Delta \leq n^{1/2 + 1/(2t)} \left(\frac{p}{1-p}\right)^{1/2}$, we can bound
    \begin{align*}
        o\left(\frac{\Delta^{t-2x}}{n^{\frac{t+1}{2} - x} \left(\frac{p}{1-p}\right)^{\frac{t}{2} - x} } \right) \le o\left(\frac{n^{\frac{t+1}{2} - x - \frac{x}{t}} \left(\frac{p}{1-p}\right)^{\frac{t}{2}-x}}{n^{\frac{t+1}{2} - x} \left(\frac{p}{1-p}\right)^{\frac{t}{2} - x} } \right) = o\left(\frac{1}{n^{x/t}}\right) = o(1).
    \end{align*}
    Since in both regimes of $\Delta$ the expression in \eqref{ineq:ratio-bound-combined} is $o(1)$, we conclude that:
    \begin{align*}
        &\quad \frac{n^{|V(S_1 \cup S_2)| - |V(S_1 \triangle S_2)|} M_{S_1 \triangle S_2, H} \left(\frac{1-p}{p}\right)^{|S_1 \triangle S_2|/2} }{M_{\mathbf{S} ,H}^2 \left(\frac{1-p}{p}\right)^{|\mathbf{S}| }}\\ &\leq (1+o(1)) \cdot \frac{\left(n \frac{p}{1-p}\right)^x\sum_{i\in V(H)}d_{i}^{2t-2x} }{
        \Big( \sum_{i\in V(H)}d_{i}^{t} \Big)^2}\\ &\leq o(1),
    \end{align*}
    which finishes the proof for Case $2$.

    Combining the case discussions, we obtain the result.
\end{proof}

\section{Proof of Main Corollary}

\begin{proof}[Proof of Corollary \ref{cor:main}]
    We prove both directions of the stated condition.
    \begin{itemize}
        \item If $\limsup_{n \to \infty} \frac{\sum_{v \in V(H)} d_v^t }{n^{\frac{1+t}{2}} \left(\frac{p}{1-p}\right)^{\frac{t}{2}}} < \infty$ for any $1\le t \le D$, then for any such star, by Lemma \ref{lemma:star-shape-copy}, we have
        \begin{align}
            &\quad \limsup_{n \to \infty} \frac{M_{\mathbf{K}_{1,t}, H}^2}{M_{\mathbf{K}_{1,t}}} \left(\frac{1-p}{p}\right)^{t} \\
            &= \limsup_{n \to \infty} \left(\frac{(1+o(1)) \sum_{v \in V(H)} (d_v)_{(t)} }{ n^{(1+t)/2} \left(\frac{p}{1-p}\right)^{t/2}}\right)^2\\
            &\le \limsup_{n \to \infty} \left(\frac{(1+o(1)) \sum_{v \in V(H)} d_v^t }{n^{(1+t)/2} \left(\frac{p}{1-p}\right)^{t/2}}\right)^2\\
            &< \infty.
        \end{align}
        By the contrapositive of Proposition \ref{prop:star-shape}, the degree-$D$ advantage in this case must be bounded away from infinity. By Theorem \ref{thm:main}, degree-$D$ polynomial tests do not achieve strong separation between $\mathbb{P}$ and $\mathbb{Q}$.

        \item If $\lim_{n \to \infty} \max_{1\le t\le D}\frac{\sum_{v \in V(H)} d_v^t }{n^{\frac{1+t}{2}} \left(\frac{p}{1-p}\right)^{\frac{t}{2}}} = \infty$, by Lemma \ref{lemma:star-shape-copy} and Lemma \ref{lemma: falling-factorial},
        \begin{align}
            &\quad \lim_{n \to \infty} \max_{1\le t\le D}\frac{M_{\mathbf{K}_{1,t}, H}^2}{M_{\mathbf{K}_{1,t}}} \left(\frac{1-p}{p}\right)^{t} \\
            &= \lim_{n \to \infty}\max_{1\le t\le D} \left(\frac{(1+o(1)) \sum_{v \in V(H)} (d_v)_{(t)} }{ n^{(1+t)/2} \left(\frac{p}{1-p}\right)^{t/2}}\right)^2\\
            &= \lim_{n \to \infty}\max_{1\le t\le D} \left(\frac{(1-o(1)) \sum_{v \in V(H)} d_v^t }{ n^{(1+t)/2} \left(\frac{p}{1-p}\right)^{t/2}}\right)^2\\
            &= \infty.
        \end{align}
        By Proposition \ref{prop:Adv}, the square of the degree-$D$ advantage can be written as
        \begin{align}
            \left(\text{Adv}^{\le D}\right)^2 = \sum_{\mathbf{S} : |\mathbf{S}| \le D } \frac{M_{\mathbf{S}, H}^2}{M_\mathbf{S} \cdot |\text{Aut}(\mathbf{S})|} \left(\frac{1-p}{p}\right)^{|\mathbf{S}|}, \label{eq:advantage-sum}
        \end{align}
        which clearly tends to infinity as $n \to \infty$, since $\mathbf{K}_{1,t}$ is one shape that contributes to the summation \eqref{eq:advantage-sum}, its automorphism group has constant size, and all the terms are nonnegative. By Theorem \ref{thm:main}, there exists a degree-$D$ test that achieves strong separation. Moreover, by the proof of Theorem \ref{thm:main} which follows the proof strategy explained in Section \ref{subsection:proof-strategy}, by choosing the star graph that maximizes the rescaled advantage 
        \begin{align}
            \frac{M_{\mathbf{K}_{1,t^*}, H}}{M_{\mathbf{K}_{1,t^*}}^{1/2} \left(\frac{p}{1-p}\right)^{t/2}} = \frac{(1-o(1)) \sum_{v \in V(H)} d_v^t }{ n^{(1+t)/2} \left(\frac{p}{1-p}\right)^{t/2}},    
        \end{align}
        we may use the polynomial $f_{\mathbf{K}_{1,t^*}}$ to strongly separate $\mathbb{P}$ and $\mathbb{Q}$.
    \end{itemize}
\end{proof}

\section{Proof of Characterization Theorem}
Here we prove our characterization theorem of optimal tests based on the maximum degree.
\begin{proof}[Proof of Theorem \ref{thm:degree-characterization}]
    By Theorem \ref{thm:main} and the proof strategy of Theorem \ref{thm:main} described in Subsection \ref{subsection:proof-strategy}, the optimal test is $f_{\mathbf{S}} = \sum_{S \subseteq \binom{V}{2}: S \cong \mathbf{S}} \chi_S$, where $S = \mathbf{K}_{1,t}$ is the star graph that maximizes $\frac{M_{\mathbf{S}, H}^2}{M_{\mathbf{S}}} \left(\frac{1-p}{p}\right)^{|\mathbf{S}|}$ among constant sized star graphs, and $\mathbb{P}$ and $\mathbb{Q}$ are strongly separated by $f_{\mathbf{S}}$ if and only if 
    \begin{equation}
        \frac{M_{\mathbf{S}, H}^2}{M_{\mathbf{S}}} \left(\frac{1-p}{p}\right)^{|\mathbf{S}|} \to \infty. \label{ineq:strong-separation-cond}
    \end{equation}

    Now let us consider for which value of $t$ the condition \eqref{ineq:strong-separation-cond} is achieved when strong separation using constant degree polynomial tests is possible.

    Reusing some computation in the proof of Proposition \ref{prop:intersection-ratio-bound}, we have
    \begin{equation}
        \frac{M_{\mathbf{S}, H}^2}{M_{\mathbf{S}}} \left(\frac{1-p}{p}\right)^{|\mathbf{S}|} \le  (1 + o(1)) \cdot \frac{\left(\sum_{i\in V(H)} d_i^t \right)^2}{n^{1+t}} \left(\frac{1-p}{p}\right)^t. \label{ineq:cond-upper-bound}
    \end{equation}

    If $\Delta \lesssim \left(n \frac{p}{1-p}\right)^{1/2}$, then the upper bound in \eqref{ineq:cond-upper-bound} is non-increasing asymptotically for constant values of $t$:
    \begin{align*}
        \frac{\left(\sum_{i\in V(H)} d_i^t \right)^2}{n^{1+t}} \left(\frac{1-p}{p}\right)^t &= \frac{1}{n} \left( \sum_{i \in V(H)} \left(\frac{d_i}{\left(n \frac{p}{1-p}\right)^{1/2}}\right)^t\right)^2,
    \end{align*}
    as $d_i \le \Delta \lesssim \left(n \frac{p}{1-p}\right)^{1/2}$. Moreover, the upper bound in \eqref{ineq:cond-upper-bound} is asymptotically tight for $t = 1$. Therefore, strong separation is possible only if the condition \eqref{ineq:strong-separation-cond} holds for $t = 1$, which means in this case counting edges is the optimal test.

    On the other hand, if $\Delta \ge \left(n \frac{p}{1-p}\right)^{1/2 + \varepsilon}$ for some constant $\varepsilon > 0$, we again reuse some computation in the proof of Proposition \ref{prop:intersection-ratio-bound} and have
    \begin{align*}
        \frac{M_{\mathbf{S}, H}^2}{M_{\mathbf{S}}} \left(\frac{1-p}{p}\right)^{|\mathbf{S}|} &\ge \frac{\left(\sum_{i \in V(H)} (d_i)_{(t)} \right)^2}{n^{1+t}} \left(\frac{1-p}{p}\right)^{t}
    \end{align*}
    \begin{align*}
        &\ge (1 - o(1)) \cdot \frac{\Delta^{2t}}{n^{1+t} \left(\frac{p}{1-p}\right)^t}\\
        &\ge (1 - o(1)) \cdot \frac{1}{n} \left(\frac{\Delta^2}{n \frac{p}{1-p}}\right)^t\\
        &\ge (1 - o(1)) \cdot \frac{1}{n} \left(n \frac{p}{1-p}\right)^{2\varepsilon t},
    \end{align*}
    which is clearly $\omega(1)$ for a large enough constant $t$. In particular, setting $t = \lceil \frac{3}{2\varepsilon} \rceil + 1$ achieves the condition \eqref{ineq:strong-separation-cond}, and it is easy to check that the quantity in \eqref{ineq:strong-separation-cond} evaluated at $\mathbf{K}_{1,t}$ dominates that evaluated at the edge graph. By the last claim of this theorem that the optimal test is either to count edges or large stars, which we are going to prove, counting signed copies of $\mathbf{K}_{1,t}$ achieves strong separation.

    Lastly, let us consider the square root of the quantity $\frac{M_{\mathbf{S}, H}^2}{M_{\mathbf{S}}} \left(\frac{1-p}{p}\right)^{|\mathbf{S}|}$ and prove that it is sandwiched between two convex functions. Using this strategy, we will show that the optimal test is either to count edges or large stars. By Lemma \ref{lemma:star-shape-copy}, for $\mathbf{S} = \mathbf{K}_{1,t}$, we have
    \begin{align*}
        \frac{M_{\mathbf{S}, H}}{M^{1/2}_{\mathbf{S}}} \left(\frac{1-p}{p}\right)^{|\mathbf{S}|/2} = \frac{\sum_{i\in V(H)} (d_i)_{(t)}}{n^{(1+t)/2}} \left(\frac{1-p}{p}\right)^{t/2}.
    \end{align*}
    We will prove that, for fixed constant $D$, there exists constants $C_1,C_2 > 0$ such that
    \begin{align}
       C_1 \cdot\frac{\sum_{i\in V(H)} d_i^t}{n^{(1+t)/2}} \left(\frac{1-p}{p}\right)^{t/2} - C_2 \leq \frac{\sum_{i\in V(H)} (d_i)_{(t)}}{n^{(1+t)/2}} \left(\frac{1-p}{p}\right)^{t/2} \leq \frac{\sum_{i\in V(H)} d_i^t}{n^{(1+t)/2}} \left(\frac{1-p}{p}\right)^{t/2} \label{ineq:squeeze-bound}
    \end{align}
    holds for any $t \in [D]$. The second inequality is trivial using the definition of falling factorial. We then focus on the first inequality. Let us divide all the vertices $i \in V(H)$ by comparing $d_i$ with $t$:
    \begin{align*}
         \sum_{i\in V(H)} (d_i)_{(t)} = & \sum_{\substack{ v: d_i \geq t}}(d_i)_{(t)} + \sum_{\substack{ v: d_i < t}}(d_i)_{(t)}.
    \end{align*}
    In particular, for any $v$ with $d_i \geq t$, Lemma~\ref{lemma: falling-factorial-lb} indicates that
    \begin{align*}
        (d_i)_{(t)} \geq d_i^t \cdot e^{-\tfrac{t^2}{2(d_i - t+1)}} \geq d_i^t \cdot e^{-\tfrac{t^2}{2}}  \geq  d_i^t \cdot e^{-D^2/2} 
    \end{align*}
    due to $t \leq D$ and $t \leq d_i$. For the parts with $d_i < t$, we compute   
    \begin{align*}
        \frac{\sum_{i\in V(H):d_i < t} d_i^t}{n^{(1+t)/2}}\left(\frac{1-p}{p}\right)^{t/2} \leq \frac{|V(H)| \cdot D^t}{n^{(1+t)/2}}\left(\frac{1-p}{p}\right)^{t/2} \leq \frac{|V(H)|D}{n} \cdot \left(\frac{D^2}{np}\right)^{\tfrac{t-1}{2}} = O(1)
    \end{align*}
    since $p = \Omega(1)$, $|V(H)| \leq n$ and $d_i < t \leq D = O(1)$. Combining the results, we attain
    \begin{align}
        \frac{\sum_{i\in V(H)} d_i^t}{n^{(1+t)/2}} \left(\frac{1-p}{p}\right)^{t/2} &= \frac{\sum_{i\in V(H):d_i<t} d_i^t}{n^{(1+t)/2}} \left(\frac{1-p}{p}\right)^{t/2} + \frac{\sum_{i\in V(H):d_i\geq t} d_i^t}{n^{(1+t)/2}} \left(\frac{1-p}{p}\right)^{t/2}
    \end{align}
    \begin{align}
        & \leq O(1) + e^{D^2/2} \cdot \frac{\sum_{i\in V(H):d_i\geq t} (d_i)_{(t)}}{n^{(1+t)/2}} \left(\frac{1-p}{p}\right)^{t/2} \\
        & = O(1) + e^{D^2/2} \cdot \frac{\sum_{i\in V(H)} (d_i)_{(t)}}{n^{(1+t)/2}} \left(\frac{1-p}{p}\right)^{t/2} \label{ineq:sum-bounds}
    \end{align}
    where the last equality holds because $(d_i)_{(t)} = 0$ for $d_i < t$. Therefore, by \eqref{ineq:sum-bounds}, given fixed $D$, there exists constants $C_1,C_2>0$ independent from $t \in [D]$ such that
    \begin{align*}
         C_1 \cdot\frac{\sum_{i\in V(H)} d_i^t}{n^{(1+t)/2}} \left(\frac{1-p}{p}\right)^{t/2} - C_2 \leq \frac{\sum_{i\in V(H)} (d_i)_{(t)}}{n^{(1+t)/2}} \left(\frac{1-p}{p}\right)^{t/2}, \quad \forall 1\le t \le D,
    \end{align*}
    which finishes the proof of \eqref{ineq:squeeze-bound}.

    According to Theorem \ref{thm:main} and the proof strategy described in Section \ref{subsection:proof-strategy}, counting a star graph $\mathbf{S}$ achieves strong separation as long as $\frac{M_{\mathbf{S},H}}{M^{1/2}_H} \left(\frac{1-p}{p}\right)^{|\mathbf{S}|/2} = \frac{\sum_{i\in V(H)} (d_i)_{(t)}}{n^{(1+t)/2}} \left(\frac{1-p}{p}\right)^{t/2} = \omega(1)$ and it maximizes this rescaled advantage (up to a multiplicative constant) among the set of stars with size at most $|\mathbf{S}|$. 
    Therefore, in order to establish that the optimal test tests is either to count edges or large stars, it suffices show that the quantity $\frac{\sum_{i \in V(H)} (d_i)_{(t)}}{n^{\tfrac{1+t}{2}}\left(\tfrac{p}{1-p}\right)^{t/2} }$ for $1\le t \le D$ is maximized (up to a multiplicative constant) at either $t = 1$ or $t = D$ whenever strong separation can be achieved. Note that whenever strong separation is achieved, the maximum rescaled advantage among stars of size at most $D$ satisfies $\max_{1\le t \le D } \frac{\sum_{i \in V(H)} (d_i)_{(t)}}{n^{\tfrac{1+t}{2}}\left(\tfrac{p}{1-p}\right)^{t/2} } = \omega(1)$ by Corollary \ref{cor:main}. Thus, when strong separation is achieved, we may as well show that $\frac{\sum_{i \in V(H)} d_i^t}{n^{\tfrac{1+t}{2}}\left(\tfrac{p}{1-p}\right)^{t/2} }$ is maximized at either $t = 1$ or $t = D$ with the help of the bounds from \eqref{ineq:squeeze-bound}. We notice that as a function of $t$, $\frac{\sum_{i \in V(H)} d_i^t}{n^{\tfrac{1+t}{2}}\left(\tfrac{p}{1-p}\right)^{t/2} }$ is convex. Thus, when taking values between $1$ and $D$, it is maximized at either $t=1$ or $t = D$. We thus conclude that the optimal test should be either to count edges or to count large stars.

    Finally, as a corollary of our characterization theorem, we identify the phase diagram of the planted subgraph detection problem, as shown in Figure \ref{fig:char}, parametrized by the maximum degree $\Delta$ and the total number of edges $m$. Since we know the optimal test is always to count edges or to count ``large'' stars, it is enough to check if any of the two tests achieve strong separation in each region of the Figure.
    \begin{itemize}
        \item  If $m = \omega\left(n \left(\frac{p}{1-p}\right)^{1/2}\right)$, let us analyze the rescaled advantage of counting signed edges:
        \begin{align}
            \frac{M_{\mathbf{K}_{1,1}, H}}{M_{\mathbf{K}_{1,1}}^{1/2} \left(\frac{p}{1-p}\right)^{1/2}} &= (1-o(1)) \frac{ \sum_{v \in V(H)} d_v }{ n \left(\frac{p}{1-p}\right)^{1/2}}\\
            &= (1-o(1)) \frac{ 2m }{ n \left(\frac{p}{1-p}\right)^{1/2}}, \label{eq:rescaled-adv}
        \end{align}
        where we use the hand-shaking lemma for the number of edges in the last equality. Clearly, $m = \omega\left(n \left(\frac{p}{1-p}\right)^{1/2}\right)$ implies the rescaled advantage in \eqref{eq:rescaled-adv} tends to infinity, and by Corollary \ref{cor:main} we conclude that counting signed edges achieves strong separation.
        \item If $\Delta \ge \left(n \frac{p}{1-p}\right)^{1/2 + \varepsilon}$ for some constant $\varepsilon > 0$, it follows from the characterization theorem that counting ``large'' stars achieves strong separation.
        \item Finally, we turn to the case $\Delta=O\left( \left(n \frac{p}{1-p}\right)^{1/2}\right)$ and $m = O\left(n \left(\frac{p}{1-p}\right)^{1/2}\right)$. By our characterization theorem, the optimal test among constant degree polynomials in this case is to count signed edges. Since $m = O\left(n \left(\frac{p}{1-p}\right)^{1/2}\right)$, the rescaled advantage in \eqref{eq:rescaled-adv} is bounded by a constant. By Corollary \ref{cor:main}, counting signed edges fails to achieve strong separation, which concludes the proof.
    \end{itemize}
\end{proof}

\begin{lemma} \label{lemma: falling-factorial-lb}
    Let $a,b$ be nonnegative integers with $a \geq b$. Then it holds that 
    \begin{align*}
        (a)_{(b)} = \frac{a!}{(a-b)!} \geq a^b \cdot \exp\left(-\tfrac{b^2}{2(a-b+1)}\right)
    \end{align*}
\end{lemma}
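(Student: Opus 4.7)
The plan is to take logarithms of the ratio $(a)_{(b)}/a^b$ and bound the resulting sum by an elementary logarithmic inequality, then use the uniform lower bound $a-k \geq a-b+1$ on the remaining terms. The trivial cases $b=0$ (both sides equal $1$) are handled separately, so I assume $1 \leq b \leq a$.

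First, I would write
\[
\frac{(a)_{(b)}}{a^b} = \prod_{k=0}^{b-1}\left(1 - \frac{k}{a}\right),
\]
so that taking logarithms yields
\[
\log\frac{(a)_{(b)}}{a^b} = \sum_{k=0}^{b-1} \log\!\left(1 - \frac{k}{a}\right).
\]

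Next, I would apply the elementary inequality $\log(1-x) \geq -\frac{x}{1-x}$ valid for all $x \in [0,1)$ (verified e.g.\ by noting both sides vanish at $x=0$ and comparing derivatives $-1/(1-x) \geq -1/(1-x)^2$). Applied termwise with $x = k/a$, this gives
\[
\log\frac{(a)_{(b)}}{a^b} \geq -\sum_{k=0}^{b-1}\frac{k/a}{1 - k/a} = -\sum_{k=0}^{b-1}\frac{k}{a-k}.
\]

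Finally, since $k \leq b-1$ implies $a - k \geq a - b + 1$, I would bound
\[
\sum_{k=0}^{b-1}\frac{k}{a-k} \leq \frac{1}{a-b+1}\sum_{k=0}^{b-1} k = \frac{b(b-1)}{2(a-b+1)} \leq \frac{b^2}{2(a-b+1)}.
\]
Exponentiating delivers the claim. This is a short analytic exercise with no real obstacle; the only point requiring mild care is that the bound $a-k \geq a-b+1$ remains strictly positive even in the boundary case $a = b$ (where it equals $1$), so the denominator never blows up.
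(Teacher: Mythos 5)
Your proof is correct and follows essentially the same route as the paper's: both take logarithms, apply the inequality $\log(1-x) \geq -x/(1-x)$ termwise, and bound each denominator $a-k$ below by $a-b+1$. The only cosmetic difference is that you work with the ratio $(a)_{(b)}/a^b$ directly, whereas the paper expands $\log\big(a!/(a-b)!\big)$; the algebra is the same.
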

\begin{proof}
    We compute as
    \begin{align*}
        \log\left( \frac{a!}{(a-b)!}\right) & = b \log a + \sum_{i=0}^{b-1} \log(1-i/a) \\
        \intertext{using the fact that $\log(1-x) \geq -\frac{x}{1-x}$ holds for any $x \in (0,1)$,}
        & \geq b\log a - \sum_{i=0}^{b-1} \frac{i}{a-i} \\
        & \geq b\log a - \sum_{i=0}^{b-1} \frac{i}{a-b+1} \\
        & \geq b \log a - \frac{b^2}{2(a-b+1)}.
    \end{align*}
    We conclude by taking exponential of both sides.
\end{proof}

    

\section{Proof of Tightness of Main Theorem}

In this section, we prove the tightness of our main theorem: if either $p = \Omega(1)$ or $D = O(1)$ is not satisfied, counting stars could fail to strong separate $\mathbb{P}$ and $\mathbb{Q}$ in the planted subgraph detection task while some other degree-$D$ polynomial does so. Our proofs consider two natural planted subgraph detection settings:
\begin{itemize}
    \item a constant-sized clique planted in a sparse $G(n,p)$,
    \item a clique of size $k = \Theta(\sqrt{n})$ planted in $G(n, \frac{1}{2})$,
\end{itemize}
and show in each case that counting stars does not capture the strong separability of the problem with respect to degree-$D$ polynomials, with $D = O(1)$ in the first setting and $D = O(\log n)$ in the second setting. The main effort in these proofs lies in careful second moment analysis which confirms strong separation is achieved by some natural degree-$D$ polynomial (which of course is not achieved by counting stars) in each setting.

\begin{proof}[Proof of Lemma~\ref{lemma:small-p}]
    For $p = n^{-\gamma}$ where $\gamma \in (0, 1)$ is a constant, let $k = 4/\gamma$. Consider the planted subgraph detection task with a clique of size $k$ planted in $G(n,p)$. We want to show under this setting, it holds that (1) counting stars fails to achieve strong separation and (2) there exists a constant degree polynomial test that strongly separates the two hypotheses. 
    
    For (1), we calculate the advantage of signed count of a star shape $\mathbf{K}_{1,t}$ using Proposition \ref{prop:simple-moments} and Lemma \ref{lemma:star-shape-copy}:
    \begin{align}
        \left(\text{Adv}^{\leq D}\right)^2 &= \frac{\left(\sum_{i\in V(H)} (d_i)_{(t)}\right)^2}{|\text{Aut}(\mathbf{K}_{1,t})|\cdot n^{1+t}} \left( \frac{1-p}{p}\right)^t\\ 
        &\le(1+o(1))\cdot \frac{\left(\sum_{i\in V(H)} d_i^t\right)^2}{n^{1+t}} \left( \frac{1-p}{p}\right)^t\\
        &\leq (1+o(1)) \cdot \frac{k^2}{n} \cdot  \left(\frac{ k^{2}}{n^{1-\gamma} } \right)^t, \label{ineq:small-p-star-advantage}
    \end{align}
    where we use Lemma \ref{lemma: falling-factorial} in the second line. In particular, we plug in $k = 4/\gamma$ and can easily verify that $k^2 \leq n^{1-\gamma} \le n$ holds. Therefore, the advantage \eqref{ineq:small-p-star-advantage} of counting a star shape $\mathbf{K}_{1,t}$ is $O(1)$ for any $t$, which implies that counting star fails to achieve strong separation. 

    For (2), consider the simple polynomial test $f$ which counts the \emph{unsigned} number of \emph{unlabelled} copies of $k$-cliques in $G$, which can be expressed as
    \begin{align}
        f(G) = \sum_{U \subseteq V: |U| = k} \boldsymbol{1}\{G[U] \text{ is a clique}\},
    \end{align}
    and corresponds to a degree-$\binom{k}{2}$ polynomial. We will show that $f$ achieves strong separation for detecting a planted clique of size $k$ in $G(n,p)$. To this end, we need to compute the first and the second moments of $f$ under $\mathbb{P}$ and $\mathbb{Q}$.
    We observe that we always have $f \ge 1$ under $G \sim \mathbb{P}$, so $\mathbb{E}_{\mathbb{P} }[f] \ge 1$. Recall that $p = n^{-\gamma} = o\left(n^{-\frac{2}{k-1}}\right)$ for our choice of $k$. Under $\mathbb{Q}$, we have
    \begin{align}
        \mathbb{E}_{\mathbb{Q} }[f] &= \sum_{U \subseteq V: |U| = k} \mathbb{E}_{\mathbb{Q} }[\boldsymbol{1}\{G[U] \text{ is a clique}\}]\\
        &= \sum_{U \subseteq V: |U| = k} p^{\binom{k}{2}}\\
        &\le n^k p^{\binom{k}{2}}\\
        &= o(1),
    \end{align}
    \begin{align}
        \mathbb{E}_{\mathbb{Q} }[f^2] &= \sum_{U, U'\subseteq V: |U|=|U'|=k} \mathbb{E}_{\mathbb{Q}}[\boldsymbol{1}\{G[U] \text{ is clique}\} \boldsymbol{1}\{G[U'] \text{ is clique}\} ]\\
        &= \sum_{U, U'\subseteq V: |U|=|U'|=k} p^{2\binom{k}{2} - \binom{|U \cap U'|}{2}}\\
        &= \sum_{i=0}^k \sum_{\substack{U, U'\subseteq V:\\ |U|=|U'|=k,\\ |U \cap U'| = i}} p^{2\binom{k}{2} - \binom{i}{2}}\\
        &\le \sum_{i = 0}^k  \binom{n}{i} \binom{n}{k-i} \binom{n}{k-i} p^{k(k-1) - \frac{i(i-1)}{2}}\\
        &\le \sum_{i = 0}^k n^{2k - i} p^{k(k-1) - \frac{i(i-1)}{2}}\\
        &= \sum_{i = 0}^k \left(n^{k} p^{\binom{k}{2} }\right)^{2 - \frac{i}{k}} p^{\frac{i(k-i)}{2}}\\
        &= o(1),
    \end{align}
    and we get $\Var_{\mathbb{Q} }[f] = o(1)$. Now we turn to the second moment of $f$ under $\mathbb{P}$. We first notice that for any realization of the planted $\bold{H}$, we have
    \begin{align}
        \mathbb{E}_{\mathbb{P} }[f^2] = \mathbb{E}_{\mathbb{P} }[f^2 \vert \bold{H}].
    \end{align}
    Thus, we may equivalently consider another distribution $\mathbb{P}'$ where we fix the planted $k$-clique to be on the first $k$ vertices $\{1, \dots, k\} \subseteq V := [n]$. Let us denote this \emph{fixed} set of vertices where the $k$-clique under $\mathbb{P}'$ is planted on as $W$. We may now compute
    \begin{align}
        &\quad \mathbb{E}_{\mathbb{P} }[f^2]\\
        &= \mathbb{E}_{\mathbb{P}' }[f^2]\\
        &= \sum_{\substack{U, U' \subseteq V:\\ |U| = |U'| = k} } \mathbb{E}_{\mathbb{P}' }[\boldsymbol{1}\{G[U] \text{ is clique}\} \boldsymbol{1}\{G[U'] \text{ is clique}\} ]\\
        &= \sum_{x=0}^k \sum_{y=0}^k \sum_{\substack{i \le \min\{k-x, k-y\},\\j \le \min\{x,y\} }} \sum_{\substack{U, U' \subseteq V:\\ |U| = |U'| = k,\\ |U \cap W| = x,\\ |U' \cap W| = y, \\ |U \cap U' \setminus W| = i,\\ |U \cap U' \cap W| = j} } \mathbb{E}_{\mathbb{P}' }[\boldsymbol{1}\{G[U] \text{ is clique}\} \boldsymbol{1}\{G[U'] \text{ is clique}\} ]
    \end{align}
    \begin{align}
        &= 1 + 2\sum_{0 \le x < k} \sum_{\substack{U \subseteq V:\\ |U| = k,\\ |U \cap W| = x} } \mathbb{E}_{\mathbb{P}' }[\boldsymbol{1}\{G[U] \text{ is clique}\} ] \\
        &+ \sum_{0 \le x < k} \sum_{0 \le y < k} \sum_{\substack{i \le \min\{k-x, k-y\},\\j \le \min\{x,y\} }} \sum_{\substack{U, U' \subseteq V:\\ |U| = |U'| = k,\\ |U \cap W| = x,\\ |U' \cap W| = y, \\ |U \cap U' \setminus W| = i,\\ |U \cap U' \cap W| = j} } \mathbb{E}_{\mathbb{P}' }[\boldsymbol{1}\{G[U] \text{ is clique}\} \boldsymbol{1}\{G[U'] \text{ is clique}\} ]\\
        &= 1 + 2\sum_{0 \le x < k} \sum_{\substack{U \subseteq V:\\ |U| = k,\\ |U \cap W| = x} } p^{\binom{k}{2} - \binom{x}{2}} \\
        &+ \sum_{0 \le x < k} \sum_{0 \le y < k} \sum_{\substack{i \le \min\{k-x, k-y\},\\j \le \min\{x,y\} }} \sum_{\substack{U, U' \subseteq V:\\ |U| = |U'| = k,\\ |U \cap W| = x,\\ |U' \cap W| = y, \\ |U \cap U' \setminus W| = i,\\ |U \cap U' \cap W| = j} } p^{2\binom{k}{2} - \binom{x}{2} - \binom{y}{2} - \binom{i}{2} - ij}\\
        &\le 1 + 2\sum_{0 \le x < k} k^x n^{k-x} p^{\binom{k}{2} - \binom{x}{2}} \\
        &+ \sum_{0 \le x < k} \sum_{0 \le y < k} \sum_{\substack{0\le i \le \min\{k-x, k-y\},\\0 \le j \le \min\{x,y\} }} k^{x+y-j} n^i n^{k-x-i}n^{k-y-i} p^{2\binom{k}{2} - \binom{x}{2} - \binom{y}{2} - \binom{i}{2} - ij}\\
        &\le 1 + 2\sum_{0 \le x < k} k^x n^{k-x} p^{\binom{k}{2} - \binom{x}{2}} \\
        &+ 2\sum_{0 \le x \le y < k} \sum_{\substack{0 \le i \le k-y,\\0 \le j \le x }} k^{x+y-j} n^{2k-x-y-i} p^{2\binom{k}{2} - \binom{x}{2} - \binom{y}{2} - \binom{i}{2} - ij}\\
        &\le 1 + C_k \max_{0 \le x < k} n^{k-x} p^{\binom{k}{2} - \binom{x}{2}} \label{ineq:small-p-second-moment-1} \\
        &+ C_k \max_{\substack{0 \le x \le y < k, \\0 \le i \le k-y,\\0 \le j \le x }} n^{2k-x-y-i} p^{2\binom{k}{2} - \binom{x}{2} - \binom{y}{2} - \binom{i}{2} - ij}, \label{ineq:small-p-second-moment-2}
    \end{align}
    where $C_k$ in the last inequality is a constant depending on the constant $k$. Last us separately examine the terms $n^{k-x} p^{\binom{k}{2} - \binom{x}{2}}$ and $n^{2k-x-y-i} p^{2\binom{k}{2} - \binom{x}{2} - \binom{y}{2} - \binom{i}{2} - ij}$ appearing in \eqref{ineq:small-p-second-moment-1} and \eqref{ineq:small-p-second-moment-2}. Recall $p = n^{-\gamma} = o\left(n^{-\frac{2}{k-1}}\right)$. 

    For any $0 \le x < k$, we have
    \begin{align}
        n^{k-x} p^{\binom{k}{2} - \binom{x}{2}} &= \left(n^k p^{\binom{k}{2}} \right)^{1 - \frac{x}{k}} p^{\frac{x(k-x)}{2}}\\
        &= o(1).
    \end{align}
    For any $0 \le x \le y < k$, $0 \le i \le k - y$, and $0 \le j \le x$, we have
    \begin{align}
        &\quad n^{2k-x-y-i} p^{2\binom{k}{2} - \binom{x}{2} - \binom{y}{2} - \binom{i}{2} - ij}\\
        &= \left(n^k p^{\binom{k}{2}}\right)^{2 - \frac{x+y+i}{k}} p^{\frac{x(k-x)}{2} +\frac{y(k-y)}{2}+\frac{i(k-i - 2j)}{2}}. \label{ineq:small-p-second-moment-2-bound}
    \end{align}
    Note that the first exponent $2 - \frac{x + y + i}{k}$ is strictly positive as $x + y + i \le x + k < 2k$, and the second exponent satisfies
    \begin{align}
        &\quad \frac{x(k-x)}{2} +\frac{y(k-y)}{2}+\frac{i(k-i - 2j)}{2}\\
        &\ge \frac{x(k-x)}{2} +\frac{y(k-y)}{2}+\frac{i(k-i - 2x)}{2}\\
        &\ge \frac{x(k-x)}{2} +\frac{y(k-y)}{2}+\frac{i(k-(k-y) - 2x)}{2}\\
        &\ge \frac{x(k-x)}{2} +\frac{y(k-y)}{2}+\frac{-ix}{2}\\
        &\ge \frac{x(k-x)}{2} +\frac{y(k-y)}{2}+\frac{-(k-y)x}{2}\\
        &\ge \frac{x(k-x)}{2} +\frac{(y-x)(k-y)}{2}\\
        &\ge 0,
    \end{align}
    where we repeatedly apply $0\le i \le k - y$, $0\le j \le x$, and $0 \le x \le y < k$. Therefore, we conclude that the expression in \eqref{ineq:small-p-second-moment-2-bound} is $o(1)$. Now that we know both terms in \eqref{ineq:small-p-second-moment-1} and \eqref{ineq:small-p-second-moment-2} are $o(1)$, we use these bounds and conclude that
    \begin{align}
        \mathbb{E}_{\mathbb{P} }[f^2] &\le 1 + C_k \max_{0 \le x < k} n^{k-x} p^{\binom{k}{2} - \binom{x}{2}} + C_k \max_{\substack{0 \le x \le y < k, \\0 \le i \le k-y,\\0 \le j \le x }} n^{2k-x-y-i} p^{2\binom{k}{2} - \binom{x}{2} - \binom{y}{2} - \binom{i}{2} - ij}\\
        &\le 1 + o(1).
    \end{align}
    Since moreover $\mathbb{E}_{\mathbb{P}}[f] \ge 1$, we know $\Var_{\mathbb{P}}[f] = o(1)$.
    As $\Var_{\mathbb{Q}}[f], \Var_{\mathbb{P}}[f] = o(1)$ and $|\mathbb{E}_{\mathbb{P} }[f] - \mathbb{E}_{\mathbb{Q} }[f] | \ge 1 - o(1)$, we conclude that $f$ achieves strong separation.
\end{proof}

\begin{proof}[Proof of Lemma~\ref{lemma:large-D}]
    Let $k = C\sqrt{n}$ where $C>0$ is a constant. Consider the planted subgraph detection task with a clique of size $k$ planted in $G(n,\frac{1}{2})$. We want to show under this setting, it holds that (1) counting stars fails to achieve strong separation and (2) there exists an $O(\log n)$-degree polynomial test that strongly separates the two hypotheses. 

    For (1), let us consider the signed count polynomial $f_{\mathbf{S}}$ where $\mathbf{S}$ is a star shape. When $\mathbf{S}$ is a $t$-star for any $t \ge 2$, $|\text{Aut}(\mathbf{S})| = t!$. By Proposition \ref{prop:simple-moments}, we may calculate the advantage of counting $t$-stars for detecting a planted clique of size $k$ in $G(n, \frac{1}{2})$ as
\begin{align*}
    \left(\text{Adv}(f_{\mathbf{S} })\right)^2 = \frac{M^2_{\mathbf{S},K_{k}}}{|\text{Aut}(\mathbf{S})| \cdot M_{\mathbf{S}}} & = (1+o(1)) \cdot \frac{\left(\sum_{i\in V(K_{k})} (d_i)_{(t)}\right)^2}{n^{1+t} \cdot t!} \\
    & \leq  (1+o(1)) \cdot \frac{(k \cdot k^t)^2 \cdot e^t}{n^{1+t} \cdot t^t}\\
    &\leq \frac{k^2}{n}\left( \frac{ e \cdot k^2}{t\cdot n} \right)^t,
\end{align*}
which is bounded by $O(1)$ for any $t \ge 2$ when $k = C\sqrt{n}$ for a constant $C$. It is also easy to check that counting edges does not achieve strong separation in this case. We thus conclude that counting stars fails to strongly separate $\mathbb{P}$ under this setting for any $t$. 

Now we turn to prove (2). The seminal work of \cite{alon1998finding} proved that a spectral method successfully detects a planted $k$-clique with high probability when $k = C\sqrt{n}$ for a large enough constant $C > 0$, and it is known that such spectral method can be well approximated by $O(\log n)$-degree polynomials (see \cite{kunisky2019notes}, \cite{gamarnik2020low}). However, these results do not address the aspect of strong separation. Here, we will show that some $O(\log n)$-degree polynomial does achieve strong separation when $k = C\sqrt{n}$ for a large enough constant $C > 0$, following a slight variant of the polynomial that approximates the trace method.

For convenience, in the following discussion we will use $M$ to denote the $\{+1, -1\}$ adjacency matrix of $G$ drawn from the null distribution $\mathbb{Q}$ or the planted distribution $\mathbb{P}$, where an entry $(i,j)$ is $1$ if $\{i,j\}$ is an edge in $G$, $-1$ if $\{i,j\}$ is not an edge, and $0$ on the diagonal $i = j$. Let $l = B\log n$, where $B$ is a large enough constant. Consider the following polynomial \[f(M) = \sum_{\substack{(i_1, \dots, i_l):\\ i_t \in [n],\text{ all distinct}}} M_{i_1, i_2} M_{i_2, i_3} \dots M_{i_{l-1}, i_l} M_{i_l, i_1}\] of $M$, which is a degree-$l$ polynomial in the entries of $M$. Moreover, we will call $\overline{i} = (i_1, \dots, i_l)$ a (simple) closed path (of length $l$ on $l$ vertices), and use $M^{\overline{i}}:=M_{i_1, i_2} M_{i_2, i_3} \dots M_{i_{l-1}, i_l} M_{i_l, i_1}$ to denote this product that corresponds to the closed path $\overline{i}$.

Under the null distribution $\mathbb{Q}$, we have
\begin{align}
    \mathbb{E}_{\mathbb{Q} }[f(M)] &= 0,\\
    \mathbb{E}_{\mathbb{Q}}[f(M)^2] &= \sum_{\text{closed paths } \overline{i}, \overline{j}} \mathbb{E}_{\mathbb{Q}}\left[M^{\overline{i}} M^{\overline{j}}\right]\\
    &= \sum_{\text{closed paths } \overline{i}, \overline{j}} \boldsymbol{1}\{E(\overline{i}) = E(\overline{j})\}\\
    &= n(n-1)\dots(n-l+1)\cdot 2l\\
    &= (1 + o(1))2ln^l.
\end{align}

Let us now consider the planted distribution $\mathbb{P}$. Since the planted clique can be specified by a subset of vertices the clique is planted on, we will denote this subset of vertices as $\bold{W}$, where we use bold letter to emphasize it is uniformly random among all subsets of size $k$. Under $\mathbb{P}$, we have
\begin{align}
    \mathbb{E}_{\mathbb{P} }[f(M)] &= \sum_{\text{closed path } \overline{i}} \mathbb{E}_{\mathbb{P} }\left[M^{\overline{i}}\right]\\
    &= \sum_{\text{closed path } \overline{i}} \mathbb{E}_{\mathbb{P} }\left[\boldsymbol{1}\{i_1, \dots, i_l \in \bold{W}\}\right],
\end{align}
since conditioning on the planted clique, $\mathbb{E}_{\mathbb{P} }\left[M^{\overline{i}} \Big\vert \bold{W}\right] = 0$ whenever $\overline{i}$ is not fully contained in the planted clique, and $1$ otherwise. We further observe that the probability that any fixed closed path $\overline{i}$ of length $l$ is contained in $\bold{W}$ is simply $\frac{k(k-1)\dots (k-l+1)}{n(n-1)\dots (n-l+1)} = (1 + o(1))\cdot \left(\frac{k}{n}\right)^l$, which can also be verified using Lemma \ref{lemma:inclusion-prob}. As a result,
\begin{align}
    \mathbb{E}_{\mathbb{P} }[f(M)] &= \sum_{\text{closed path } \overline{i}} \mathbb{E}_{\mathbb{P} }\left[\boldsymbol{1}\{i_1, \dots, i_l \in \bold{W}\}\right]\\
    &= \sum_{\text{closed path } \overline{i}} \mathbb{P} (i_1, \dots, i_l \in \bold{W})\\
    &= (1 + o(1)) n(n-1)\dots(n-l+1) \cdot \left(\frac{k}{n}\right)^l\\
    &= (1 + o(1)) k^l.
\end{align}

Finally, we turn to the second moment of $f(M)$ under $\mathbb{P}$. We have
\begin{align}
    \mathbb{E}_{\mathbb{P}}[f(M)^2] &= \sum_{\text{closed paths } \overline{i}, \overline{j}} \mathbb{E}_{\mathbb{P}}\left[M^{\overline{i}} M^{\overline{j}}\right]\\
    &= \sum_{\text{closed paths } \overline{i}, \overline{j}} \mathbb{E}_{\mathbb{P} } \left[\boldsymbol{1}\left\{V\left(\overline{i} \triangle \overline{j} \right) \subseteq \bold{W}\right\}\right]
\end{align}
\begin{align}
    &= (1 + o(1))\sum_{\text{closed paths } \overline{i}, \overline{j}} \left(\frac{k}{n}\right)^{\left|V\left(\overline{i} \triangle \overline{j} \right)\right|},
\end{align}
following similar reasoning as before, where $\overline{i} \triangle \overline{j}$ denotes the symmetric difference shape of the two closed paths $\overline{i}$ and $\overline{j}$, which in particular, as we recall from Definition \ref{dfn:sym-diff-shape}, does not contain any isolated vertex. Note that for the case of vertex-disjoint $\overline{i}$ and $\overline{j}$, there are approximately $n^{2l}$ many such terms in the sum, each contributing approximately $\left(\frac{k}{n}\right)^{2l}$, and thus the total contribution is close to $k^{2l}$ which matches the square of the first moment of $f(M)$ under $\mathbb{P}$. If we can show that the rest of the contribution from the other terms corresponding to pairs of $\overline{i}$ and $\overline{j}$ that are not vertex-disjoint\footnote{In fact, one can even follow the argument used in the proof of our main theorem to only focus on the pairs of $\overline{i}$ and $\overline{j}$ that are not edge-disjoint. Nevertheless, the argument we present here suffices for the proof.} is much smaller than $k^{2l}$, then we would be done.

Let us further break down the situation when $\overline{i}$ and $\overline{j}$ are not vertex-disjoint into cases. In the following definitions, the indices in the constraints are cyclic modulo $l$ (e.g., $i_{t'-1}$ refers to $i_l$ if $t' = 1$). For a pair of closed paths $\overline{i}$ and $\overline{j}$, define
\begin{align}
    A &:= \{t \in [l]: \forall t'\in [l], j_t \ne i_{t'}\},\\
    B &:= \{t \in [l]: \exists t' \in [l], \text{s.t. } j_t = i_{t'}, j_{t-1} \ne i_{t' - 1}, j_{t + 1} \ne i_{t' + 1}\},\\
    S &:= \{\{j_t, j_{t+1}\}, t \in [l]: \exists t' \in [l], \text{s.t. } \{j_t, j_{t+1}\} = \{i_{t'}, i_{t'+1}\} \},\\
    a &:= |A|,\\
    b &:= |B|,\\
    s &:= |S|,
\end{align}
where $A$ is the index set of vertices in $\overline{j}$ not shared with $\overline{i}$, $B$ is the index set of vertices in $\overline{j}$ shared with $\overline{i}$ that do not participate in any shared edges, and $S$ is the set of edges in $\overline{j}$ shared with $\overline{i}$. We moreover observe that if we restrict the attention to the set $S$ of shared edges, they form a number of connected components, and we denote this number as $c$. Formally,
\begin{align}
    C &:= \{t \in [l]: \{j_t, j_{t+1}\} \in S, \{j_{t-1}, j_t\} \not\in S \},\\
    c &:= |C|,\\
    CC &:= \{(j_t, j_{t+1}, \dots, j_{t+s_t}): t \in C, s_t \text{ is the maximum such that }\\
    &\quad\quad \forall t \le r \le t+s_t-1, \{j_{r}, j_{r+1}\} \in S\},
\end{align}
where $C$ is the set of the starting indices in the closed path $\overline{j}$ of the connected components of the set $S$ of shared edges (if $\overline{i}$ and $\overline{j}$ do not overlap completely), and $CC$ is the collection of connected components (sub-paths of $\overline{j}$) of $S$. Let us state as a fact that whenever $s = |S| < l$, the parameters $a,b,s,c$ satisfy the identity $l = a+b+s+c$, which is easy to verify.

Now, we claim that $\left|V\left(\overline{i} \triangle \overline{j} \right)\right| = 2l - b - 2s$. To see this, let us consider which vertices in $V\left(\overline{i}\right) \cup V\left(\overline{j}\right)$ belong to $V\left(\overline{i} \triangle \overline{j} \right)$. If $s = |S| = l$, then the two closed walks overlap completely, and the claim obviously holds. So let us consider otherwise. 
\begin{itemize}
    \item First, all vertices in $V\left(\overline{i}\right) \cup V\left(\overline{j}\right)$ that do not correspond to any shared vertex or participate in any shared edges belong to $V\left(\overline{i} \triangle \overline{j} \right)$, and there are $2(l - b - (s+c)) = 2a$ such vertices.
    \item Second, each pair of shared vertices from $\overline{i}$ and $\overline{j}$ that do not participate in shared edges (corresponding to vertices in $\overline{j}$ indexed by $B$) contributes $1$ vertex to $V\left(\overline{i} \triangle \overline{j} \right)$, and there are $b$ of them.
    \item Third, every connected component of shared edges in $S$ contributes $2$ vertices, since each connected component is a sub-path shared by $\overline{i}$ and $\overline{j}$, and only the two endpoints of the sub-path survive the symmetric difference operation. This gives a total of $2c$ vertices.
\end{itemize}
From the analysis above, we have $\left|V\left(\overline{i} \triangle \overline{j} \right)\right| = 2a + b + 2c = 2l - b - 2s$.

Moreover, we may estimate the number of pairs of $\overline{i}$ and $\overline{j}$ with the specific choice of parameters $a,b,s,c$ in the following way:
\begin{itemize}
    \item WLOG, we enumerate the number of $\overline{i}$ as $n(n-1)\dots(n-l+1) \le n^l$.
    \item Next, we enumerate the vertices indexed in $A$ in the order they are traversed in $\overline{j}$, creating $\le n^a$ choices.
    \item Then, we enumerate the vertices in $\overline{j}$ indexed in $B$, by enumerating the indices $t$ and the matching indices $t'$ of $\overline{i}$, creating $\le (l \times l)^b = l^{2b}$ choices.
    \item Finally, we enumerate the vertices that participate in the shared edges according to the connected components the shared edges form. For each of the $c$ connected components, we enumerate the starting index $t$ in $\overline{j}$, the matching index $t'$ in $\overline{i}$, the size $s_t$ of this connected component, and the direction (whether they follow the forward direction $j_t = j_{t'}, j_{t+1} = i_{t'+1}, j_{t+2} = i_{t'+2}, \dots$ or the backward direction $j_t = j_{t'}, j_{t+1} = i_{t'-1}, j_{t+2} = i_{t'-2}, \dots$) of this component of shared edges, creating $\le (l \times l \times l \times 2)^{c} = (2l^3)^c$ choices.
\end{itemize}
It is not hard to see one can uniquely recover a pair of $\overline{i}$ and $\overline{j}$ using the information above. Thus, the total number of pairs of $\overline{i}$ and $\overline{j}$ with the specific choice of parameters $a,b,s,c$ is bounded by 
\begin{align}
    N(a,b,s,c) \le n^l\cdot n^a \cdot l^{2b} \cdot (2l^3)^c = 2^c l^{2b+3c} n^{l+a}.
\end{align}
Now we are ready to prove a bound on the second moment of $f(M)$ under $\mathbb{P}$:
\begin{align}
    \mathbb{E}_{\mathbb{P}}[f(M)^2] &= (1 + o(1))\sum_{\text{paths } \overline{i}, \overline{j}} \left(\frac{k}{n}\right)^{\left|V\left(\overline{i} \triangle \overline{j} \right)\right|}\\
    &= (1 + o(1)) \sum_{a,b,s,c} \sum_{\substack{\text{paths } \overline{i}, \overline{j}:\\ \overline{i}, \overline{j} \text{ satisfies the parameters } a,b,s,c }} \left(\frac{k}{n}\right)^{2l - b - 2s}\\
    &= (1 + o(1)) \sum_{a,b,s,c} N(a,b,s,c) \left(\frac{k}{n}\right)^{2l - b - 2s}\\
    \intertext{note that when $\overline{i}, \overline{j}$ are vertex-disjoint, the parameters are $(a = l, b = 0, s= 0, c=0)$}
    &\le (1 + o(1))\left[ n^{2l} \cdot \left(\frac{k}{n}\right)^{2l} +  \sum_{\substack{a,b,s,c:\\ a < l}} 2^c l^{2b+3c} n^{l+a} \left(\frac{k}{n}\right)^{2l - b - 2s}\right].
\end{align}
Let us examine one term in the sum above:
\begin{align}
    2^c l^{2b+3c} n^{l+a} \left(\frac{k}{n}\right)^{2l - b - 2s} &= 2^cl^{2b+3c} k^{2l - b - 2s} n^{a + b+2s - l}\\
    &= 2^cl^{2b+3c} k^{2l - b - 2s} n^{s - c}\\
    &= k^{2l} \left(\frac{l^2}{k}\right)^b \left(\frac{2l^3}{n}\right)^c \left(\frac{n}{k^2}\right)^s,
\end{align}
where we use the previously stated identity $l = a + b + s + c$ in the second to last line. Moreover, when $\overline{i}$ and $\overline{j}$ are not vertex-disjoint (i.e., $a < l$), either $b > 0$ or $c > 0$. In either situation, when $k = C\sqrt{n}$ for a large constant $C$ and $l = \Theta(\log n)$, the expression $k^{2l} \left(\frac{l^2}{k}\right)^b \left(\frac{2l^3}{n}\right)^c \left(\frac{n}{k^2}\right)^s$ above is $o(k^{2l} / n^{0.49})$. Since there are at most $l^4 = o(n^{0.01})$ choices of parameters $a,b,s,c$ that satisfies $a < l$, and  each term in the sum contributes $o(k^{2l} / n^{0.49})$, we conclude that
\begin{align}
    \mathbb{E}_{\mathbb{P}}[f(M)^2] &\le (1 + o(1))\left[ n^{2l} \cdot \left(\frac{k}{n}\right)^{2l} +  \sum_{\substack{a,b,s,c:\\ a < l}} 2^c l^{2b+3c} n^{l+a} \left(\frac{k}{n}\right)^{2l - b - 2s}\right]\\
    &\le (1 + o(1))\left[ k^{2l} +  k^{2l}/n^{0.48}\right]\\
    &\le (1 + o(1)) k^{2l}.
\end{align}

Together with the first moment under $\mathbb{P}$, we get that
$\Var_{\mathbb{P} }[f(M)] = o(k^{2l})$. From the moment computation for $\mathbb{Q}$, we also have $\Var_{\mathbb{Q} }[f(M)] = O(ln^l) = o(k^{2l})$, and $|\mathbb{E}_{\mathbb{P} }[f(M)] - \mathbb{E}_{\mathbb{Q} }[f(M)] | = (1 - o(1))k^l$. We have thus verified that $f(M)$ achieves strong separation for detecting a planted clique of size $k$ in $G(n, \frac{1}{2})$.

\end{proof}

\section{Conclusion and Future Directions}

In this paper, we initiate the unified study of the computational thresholds in detecting \emph{arbitrary} planted subgraph structures in $G(n,p)$. We give a complete characterization of the strong separation power of \emph{constant} degree polynomials in the regime of $p = \Omega(1)$.  In particular, we reveal that under these assumptions, it is always optimal to count stars among all constant-degree polynomials.

Our work suggests many future directions. \begin{enumerate}
    \item At a conceptual level, we believe our results make a strong case that studying ``unified'' planted random graph models, containing as special cases multiple well-studied models, is very beneficial. In our case, it was the generality of the studied model (Definition \ref{dfn:planted}) that allowed us to reveal the (perhaps surprising) constant-degree optimality of the star counts. To the best of our knowledge, this optimality has not been observed before for any specific case of planted $H$. It is interesting what other common structural computational properties are satisfied by all planted subgraph models.

    \item In terms of specific directions, we consider a very interesting project to study what is the optimal $D$-degree polynomial when either $D=\omega(1)$ or $p=o(1)$ that our established star-count optimality fails (see Section \ref{subsection:failure-of-counting-stars}). Moreover, our proof techniques carefully leverage the graph structure and do not trivially extend to hypergraphs. It is a nice question whether and how these phenomena generalized to planted hypergraph settings.
    
    \item Recent work \cite{montanari2022equivalence} suggests the constant-degree optimality of counting trees (equivalently of Approximate Message Passing) in terms of \emph{recovering} a hidden spike in the spiked Wigner model (with a ``dense'' prior). While no trivial connection with our work is possible (we study the \emph{detection version} between Bernoulli graph models), it is an exciting direction to study possible connections between the star-optimality from our work and the tree-optimality from \cite{montanari2022equivalence}.
    
\end{enumerate} 



\section*{Acknowledgements}
    The authors are thankful to Alex~Wein and Tim Kunisky for useful comments. X.Y.~was partially supported by a Simons Investigator award from the Simons Foundation to Daniel Spielman. P.Z. is supported by a Yale University Fund to Amin Karbasi.

\newpage
\bibliographystyle{plain}
\bibliography{pc}

\clearpage

\appendix

\section{Proof of Auxillary Lemmas}\label{subsection:auxillary-proof}

\begin{proof}[Proof of Proposition \ref{prop:Adv}]
    Recall that the fact that $\chi_S(X) = \prod_{\{i,j\} \in E(S)} \frac{X_{i,j} - p}{\sqrt{p(1-p)}}$ for $S \subseteq \binom{V}{2}: |S| \le D$ form an orthonormal basis of the multilinear polynomials in $\mathbb{R}[X]^{\le D}$ with respect to $\langle \cdot, \cdot \rangle_{\mathbb{Q}}$. Therefore, we may expand any polynomial $f \in \mathbb{R}[X]_{\le D}$ in this basis as
    \begin{align*}
        f(X) = \sum_{S \subseteq \binom{V}{2}: |S| \le D} \langle f, \chi_S \rangle_{\mathbb{Q} } \chi_S = \sum_{S \subseteq \binom{V}{2}: |S| \le D} \hat{f}_{S} \chi_S,
    \end{align*}
    where $\hat{f}_{S} := \langle f, \chi_S \rangle_{\mathbb{Q} }$ are the Fourier coefficients of $f$. Thus, we may compute
    \begin{align}
        \text{Adv}^{\le D} &= \max_{f \in \mathbb{R}[X]^{\le D}}\frac{\mathbb{E}_{\mathbb{P}}[f]}{\sqrt{\mathbb{E}_{\mathbb{Q}}[f^2]}}\\
        &= \max_{\substack{\{\hat{f}_S\}\\ S \subseteq \binom{V}{2}: |S| \le D }}\frac{\mathbb{E}_{\mathbb{P}}\left[\sum_S \hat{f}_S \chi_S\right]}{\sqrt{\mathbb{E}_{\mathbb{Q}}\left[\left(\sum_S \hat{f}_S \chi_S\right)^2\right]}}\\
        &= \max_{\substack{\{\hat{f}_S\}\\ S \subseteq \binom{V}{2}: |S| \le D }}\frac{\sum_S \hat{f}_S \mathbb{E}_{\mathbb{P}}\left[\chi_S\right]}{\sqrt{ \sum_{S, S'} \hat{f}_S \hat{f}_{S'}\mathbb{E}_{\mathbb{Q}}\left[\chi_S\chi_{S'}\right]}}\\
        &= \max_{\substack{\{\hat{f}_S\}\\ S \subseteq \binom{V}{2}: |S| \le D }}\frac{\sum_S \hat{f}_S \mathbb{E}_{\mathbb{P}}\left[\chi_S\right]}{\sqrt{ \sum_{S} \hat{f}_S^2 }}\\
        &= \sqrt{\sum_{S \subseteq \binom{V}{2}: |S| \le D} \mathbb{E}_{\mathbb{P} }[\chi_S]^2 }.
    \end{align}

    We may compute the expectation $\mathbb{E}_{\mathbb{P}}[\chi_S]$ by first conditioning on $\bold{H}$ and then taking the expectation over random $\bold{H}$: 
    \begin{align}
        \mathbb{E}_{\mathbb{P} }[\chi_S] &= \mathbb{E}_{\bold{H}} \mathbb{E}_{\mathbb{P} }[\chi_S | \bold{H}].
    \end{align}
    Conditioning on a fixed $\bold{H}$, $\mathbb{E}_{\mathbb{P} }[\chi_S | \bold{H}] = 0$ whenever $S$ is not fully contained in $\bold{H}$, and $\mathbb{E}_{\mathbb{P} }[\chi_S | \bold{H}] = \left(\frac{1-p}{p}\right)^{|S|/2}$ if $S \subseteq \bold{H}$. Therefore, we have 
    \begin{align}
        \mathbb{E}_{\mathbb{P} }[\chi_S | \bold{H}] = \boldsymbol{1}(S \subseteq \bold{H})\left(\frac{1-p}{p}\right)^{|S|/2}.
    \end{align}
    Thus,
    \begin{align} \label{eq: chi_S_P}
        \mathbb{E}_{\mathbb{P} }[\chi_S] &= \mathbb{E}_{\bold{H}} \boldsymbol{1}(S \subseteq \bold{H})\left(\frac{1-p}{p}\right)^{|S|/2}\nonumber\\
        &= \mathbb{P}(S \subseteq\bold{H}) \left(\frac{1-p}{p}\right)^{|S|/2}\nonumber\\
        &= \frac{M_{S,H}}{M_S} \left(\frac{1-p}{p}\right)^{|S|/2},
    \end{align}
    where the last line uses Lemma \ref{lemma:inclusion-prob}. Plugging this expression back in \eqref{eq:deg-D-Adv}, we get
    \begin{align}
        \left(\text{Adv}^{\le D}\right)^2 = \sum_{S \subseteq \binom{V}{2}: |S| \le D} \frac{M_{S,H}^2}{M_S^2} \left(\frac{1-p}{p}\right)^{|S|}. \label{eq:deg-D-Adv}
    \end{align}
    Finally, we group the summation over subsets $S \subseteq \binom{V}{2}: |S| \le D$ according to the isomorphism classes of the shapes $\mathbf{S}$. For each shape $\mathbf{S}$, there are $\frac{M_{\mathbf{S}}}{|\text{Aut}({\mathbf{S}})|}$ subsets of $\binom{V}{2}$ (unlabelled copies) that are isomorphic to $\mathbf{S}$, and for isomorphic subsets, the expressions in the summation are equal. As a result, we may rewrite the summation as
    \begin{align}
        \left(\text{Adv}^{\le D}\right)^2 &= \sum_{\mathbf{S}: |\mathbf{S}| \le D} \frac{M_{\mathbf{S}}}{|\text{Aut}(\mathbf{S})|} \cdot \frac{M_{{\mathbf{S}},H}^2}{M_{\mathbf{S}}^2} \left(\frac{1-p}{p}\right)^{|{\mathbf{S}}|}\\
        &= \sum_{\mathbf{S} : |\mathbf{S}| \le D } \frac{M_{\mathbf{S}, H}^2}{M_\mathbf{S} \cdot |\text{Aut}(\mathbf{S})|} \left(\frac{1-p}{p}\right)^{|\mathbf{S}|}
    \end{align}
\end{proof}

\begin{proof}[Proof of Lemma \ref{lemma:star-shape-copy}]
    For the enumeration of $M_{\mathbf{S},H}$, we first fix vertex $i \in V(H)$ and count the number of copies of $\mathbf{K}_{1,t}$ that have its root at $i$. Suppose the degree of $i$ is $d_i$, then the number of copies of $\mathbf{K}_{1,t}$ rooted at $i$ is $(d_i)_{(t)}$. The total number of copies of $\mathbf{K}_{1,t}$ in $H$ is obtained by summing over all the vertex $i\in V(H)$:
    \begin{align*}
        M_{\mathbf{S}, H} = \sum_{i\in V(H)} (d_i)_{(t)}.
    \end{align*}
  
\end{proof}

\begin{proof}[Proof of Lemma \ref{lemma: falling-factorial}]
Clearly, one side of the inequality is obvious $\sum_{i \in [k]} (d_i)_{(t)} \le \sum_{i \in [k]} d_i^t$, so we focus on the other inequality.

We have for all $i\in [k]$, $$(d_i)_{(t)} \geq d_i^t (1-t/d_i)^t 1(d_i \geq t) \geq d_i^t(1-t^2/d_i)\boldsymbol{1}\{d_i \geq t\} \geq d_i^t-t^t-t^2 d_i^{t-1}.$$

Hence,

\begin{align}
    \frac{\sum_{i \in [k]} (d_i)_{(t)}}{\sum_{i \in [k]} d_i^t} \geq 1-\frac{k t^t}{\sum_{i \in [k]} d_i^t}-t^2\frac{\sum_{i \in [k]} d_i^{t-1}}{\sum_{i \in [k]} d_i^t
}. \label{ineq:falling-factorial-bound}
\end{align}

Clearly, by our assumption,
\begin{align}
    \frac{k t^t}{\sum_{i \in [k]} d_i^t}=o(1). \label{ineq:falling-factorial-sub-bound-1}
\end{align}

Also by H\"{o}lder inequality, 
\begin{align}
    t^2\frac{\sum_{i \in [k]} d_i^{t-1}}{\sum_{i \in [k]} d_i^t} \leq t^2 \frac{\left(\sum_{i \in [k]} d_i^{t}\right)^{1-1/t}k^{1/t}}{\sum_{i \in [k]} d_i^t}=t^2 \left(\frac{k}{\sum_{i \in [k]} d_i^t}\right)^{1/t}=o(1). \label{ineq:falling-factorial-sub-bound-2}
\end{align}

Inserting \eqref{ineq:falling-factorial-sub-bound-1} and \eqref{ineq:falling-factorial-sub-bound-2} in \eqref{ineq:falling-factorial-bound}, we obtain
\begin{align*}
    \frac{\sum_{i \in [k]} (d_i)_{(t)}}{\sum_{i \in [k]} d_i^t} \geq 1-o(1)
\end{align*}
as desired.
\end{proof}

\begin{proof}[Proof of Lemma \ref{lemma: sum-di-ub}]
    This is almost immediate from $\Delta = \max_i d_i$:
    \begin{align*}
        \sum_{i} d_i^p = \sum_{i} d_i^{p-q} d_i^q \leq \Delta^{p-q} \sum_i d_i^q.
    \end{align*}
\end{proof}

\begin{proof}[Proof of Proposition \ref{prop:simple-moments}]
    Recall that the Walsh-Fourier basis $\chi_S$ form an orthonormal basis with respect to $\mathbb{Q}$. Thus,
    \begin{align*}
        \mathbb{E}_{\mathbb{Q} }[\chi_S] &= 0, \quad \text{ if } S \ne \emptyset,\\
        \mathbb{E}_{\mathbb{Q} }[\chi_S^2] &= 1, \\
        \mathbb{E}_{\mathbb{Q} }[\chi_S\chi_S'] &= 0, \quad \text{ if } S \ne S',
    \end{align*}
    and we may compute the first and the second moments of $f_{\mathbf{S} }$ under $\mathbb{Q}$ as:
    \begin{align*}
        \mathbb{E}_{\QQ}[f_{\mathbf{S}} ] &= \sum_{S \subseteq \binom{V}{2}: S \cong \mathbf{S} } \EE_{\QQ}[\chi_S] \\
        &= 0.\\
        \mathbb{E}_{\QQ}[f^2_{\mathbf{S}} ] &= \sum_{S,S' \subseteq \binom{V}{2}: S, S'\cong \mathbf{S} } \EE_{\QQ}[\chi_S\chi_{S'}]\\
        &= \sum_{S \subseteq \binom{V}{2}: S \cong \mathbf{S} }  \EE_{\QQ}[\chi_S^2]\\
        &= \frac{M_{\mathbf{S}}}{|\text{Aut}(\mathbf{S})|}.
    \end{align*}
    Finally, we calculate the first moment of $f_{\mathbf{S} }$ under $\mathbb{P}$. Using the expectation of $\chi_S$ under $\PP$ in \eqref{eq: chi_S_P}, it holds that
     \begin{align*}
        \mathbb{E}_{\PP }[f_{\mathbf{S} } ] &= \sum_{S \subseteq \binom{V}{2}: S \cong \mathbf{S} } \EE_{\PP}[\chi_S]\\
        &= \sum_{S \subseteq \binom{V}{2}: S \cong \mathbf{S} } 
 \frac{M_{S,H}}{M_S}\left( \frac{1-p}{p} \right)^{|S|/2}\\
 &= \frac{M_{\mathbf{S}, H }}{|\text{Aut}(\mathbf{S})|} \left(\frac{1-p}{p}\right)^{|\mathbf{S} |/2}.
    \end{align*}
    
\end{proof}

\section{Deferred Lemmas and Proof of Lemmas in Applications and Counterexamples}

\begin{lemma} \label{lemma: degree-edge-bound}
    Let $k \le n$ and $0 < q \le 1$ be such that $kq = \omega(\log k )$. With probability $1 - o(1)$, for $H \sim G(k,q)$, its maximum degree is $(1 \pm o(1))\cdot (k-1)q $, and its number of edges is $(1 \pm o(1))\cdot \binom{k}{2}q$.
\end{lemma}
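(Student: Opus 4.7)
The plan is a standard two-step concentration argument: apply a Chernoff bound to each vertex degree, then a union bound over vertices for the maximum degree, and separately handle the edge count by Chebyshev.

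First I would consider the number of edges. The quantity $|E(H)|$ is a sum of $\binom{k}{2}$ independent $\text{Bernoulli}(q)$ random variables, so $\mathbb{E}[|E(H)|] = \binom{k}{2}q$ and $\mathrm{Var}[|E(H)|] \leq \binom{k}{2}q$. The assumption $kq = \omega(\log k)$ yields $\binom{k}{2}q = \omega(k\log k)$, so the coefficient of variation $\sqrt{\mathrm{Var}/\mathbb{E}^2}$ is $o(1)$. Hence, by Chebyshev's inequality with a slowly shrinking tolerance $\epsilon = \epsilon_n \to 0$ chosen so that $\epsilon^2 \cdot \binom{k}{2}q \to \infty$ (e.g.\ $\epsilon = (kq)^{-1/3}$), we obtain $|E(H)| = (1\pm o(1))\binom{k}{2}q$ with probability $1-o(1)$.

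Next, for each fixed vertex $v \in V(H)$, the degree $d_v \sim \text{Binomial}(k-1, q)$ has mean $(k-1)q$. By a standard multiplicative Chernoff bound, for any $\epsilon \in (0,1)$,
\[
\Pr\bigl[|d_v - (k-1)q| > \epsilon (k-1)q\bigr] \;\leq\; 2\exp\!\bigl(-c\,\epsilon^2 (k-1) q\bigr)
\]
for some absolute constant $c>0$. I would choose a vanishing sequence $\epsilon = \epsilon_n \to 0$ satisfying $\epsilon^2 kq \gg \log k$; this is possible precisely because $kq = \omega(\log k)$ (for instance $\epsilon = (\log k \cdot (kq)^{-1})^{1/3}$ works). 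With this choice, the per-vertex failure probability is $o(1/k)$, and a union bound over all $k$ vertices gives that with probability $1-o(1)$,
\[
d_v = (1\pm o(1))(k-1)q \qquad \text{for every } v \in V(H).
\]
In particular, the maximum degree satisfies $\max_v d_v = (1\pm o(1))(k-1)q$.

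Combining the two high-probability events by one more union bound yields the lemma. There is no real obstacle here: the only thing to verify is that $\epsilon_n$ can be chosen vanishing while keeping both the Chernoff exponent $\epsilon_n^2 kq$ and the Chebyshev exponent $\epsilon_n^2 k^2 q$ diverging, which is precisely what the assumption $kq = \omega(\log k)$ affords.
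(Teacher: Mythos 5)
Your proof is correct and takes essentially the same approach as the paper: a multiplicative Chernoff bound applied vertex-by-vertex, a union bound over the $k$ vertices, and a vanishing tolerance $\epsilon_n$ whose viability is exactly what $kq = \omega(\log k)$ buys. The one small difference is how the edge count is handled: you give it its own (clean, short) Chebyshev argument, whereas the paper reads it off as a consequence of the simultaneous degree concentration (via handshaking, since $2|E(H)| = \sum_v d_v$ and every $d_v$ is $(1\pm o(1))(k-1)q$ with high probability); either route is fine and costs essentially nothing.
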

\begin{proof}[Proof of Lemma~\ref{lemma: degree-edge-bound}]
    Let $X \sim \Binom(k-1,q)$, where $k,q$ are given as in the statement. We use the multiplicative Chernoff inequality to obtain the tail bound for $X$
    \begin{align*}
        \PP\left[|X - \EE[X]| \geq \delta\EE[X]\right] &= \PP\left[|X - (k-1)q| \geq \delta \cdot (k-1)q  \right]  \leq 2\exp\left(-\frac{\delta^2(k-1)q}{3}\right)
    \end{align*}
    where $0 < \delta < 1$ is a parameter to be determined.
    
    Now let $H \sim G(k,q)$, and denote the degree of a vertex $i \in V(H)$ as $d_i$. Note $d_i \sim \Binom(k-1,q)$ for every $i \in V(H)$. Using union bound, we compute an upper bound on the probability that the maximum degree of $H$ is too large as
    \begin{align}
        \PP\left[\max_{i \in V(H)} d_i \ge (1+\delta) \cdot (k-1)q \right] &\le k \cdot \PP[X \geq (1+\delta) \cdot (k-1)q ]  \\
        & \le 2k \cdot \exp\left(-\frac{ \delta^2(k-1)q}{3}\right). \label{ineq:max-degree-bound} 
    \end{align}
    In the same way, we can also bound the probability that the minimum degree is too small:
    \begin{align}
        \PP\left[\min_{i \in V(H)} d_i \le (1-\delta) \cdot (k-1)q \right] 
        & \le 2k \cdot \exp\left(-\frac{ \delta^2(k-1)q}{3}\right). \label{ineq:min-degree-bound}
    \end{align}
    Since $k q \geq \omega(\log k)$, we may set $\delta^2  = \frac{C \log k}{k q}$ for an arbitrarily large constant $C$ while ensuring both bounds \eqref{ineq:max-degree-bound} and \eqref{ineq:min-degree-bound} is $o(1)$. This implies, with high probability, that all degrees $d_i$ of $H\sim G(k,q)$ is concentrated in the range $(1 \pm o(1))\cdot (k-1)q$, and consequently, the maximum degree of $H \sim G(k,q)$ is $(1\pm o(1)) \cdot (k-1)q$ and the number of edges is $(1\pm o(1)) \cdot \binom{k}{2} \cdot q$.
\end{proof}







%

\end{document}